\newtheorem{thm}{Theorem}[section]
\newtheorem{lem}[thm]{Lemma}
\newtheorem{cor}[thm]{Corollary}
\newtheorem{prop}[thm]{Proposition}
\title{L-packets over strong real forms}
\author{N. Arancibia Robert, P. Mezo}
\begin{document}

\maketitle

\begin{abstract}
Langlands defined L-packets for real reductive groups. In order to refine the local Langlands correspondence,
Adams-Barbasch-Vogan combined L-packets over all real forms belonging to an
inner class 
(\cite{abv}). Using different methods, Kaletha also defines such combined L-packets with a
refinement to the local Langlands correspondence (\cite{kal}).  We prove that the
L-packets of Adams-Barbasch-Vogan and Kaletha are the same and are
parameterized identically.  
\end{abstract}

\section{Introduction}

Let $G$ be a connected reductive algebraic group defined over
$\mathbb{R}$. Langlands defined a partition of the (infinitesimal equivalence
classes of) irreducible admissible representations of $G(\mathbb{R})$
into L-packets (\cite{Langlands}).  The L-packets $\Pi_{\phi}$ are parameterized by
(conjugacy classes of) L-homomorphisms
\begin{equation}
  \label{Lhom}
  \phi: W_{\mathbb{R}} \rightarrow {^\vee}G^{\Gamma}
\end{equation}
from the Weil group $W_{\mathbb{R}}$ of $\Gamma =
\mathrm{Gal}(\mathbb{C}/\mathbb{R}$) to the L-group
${^\vee}G^{\Gamma}$ (\cite{borel}*{Section 8}).

Langlands proposed a further
parameterization of the representations in each L-packet in terms of
the centralizer ${^\vee}G_{\phi}$ of $\phi(W_{\mathbb{R}})$ in the
dual group
${^\vee}G$.  Specifically, the representations in $\Pi_{\phi}$ were to
correspond to characters of the (abelian) component group
${^\vee}G_{\phi}/(^{\vee}G_{\phi})^{0}$.  Shelstad completed this
proposal for a wide class of groups (\cite{She82}), but was obliged to
introduce formal ``ghost'' representations in order to make the
correspondence one-to-one.

Adams, Barbasch and Vogan proposed a different approach to achieve a
one-to-one correspondence (\cite{abv}*{Theorem 10.11}).  Rather than
defining a packet for a single real form $G(\mathbb{R})$, they defined a packet of
representations over several real forms appearing in the inner class of
$G(\mathbb{R})$.  We continue to call these compound packets
``L-packets'', but denote them by
\begin{equation}
  \label{abvpacket}
\Pi^{\mathrm{ABV}}_{\phi} = \left\{ \pi^{\mathrm{ABV}}_{\tau}: \tau  \in ({^\vee}G_{\phi}/({^\vee}G_{\phi})^{0})^{\wedge} 
\right\}.
\end{equation}
In this preliminary version of the L-packet, each character $\tau$
determines a real form of $G$ which is inner to a quasisplit form, and
$\pi_{\tau}^{\mathrm{ABV}}$ 
is (an infinitesimal equivalence class of) a representation of the
inner form. Unfortunately, not every real form of $G$ appears in such
an L-packet. (Only the \emph{pure real forms} appear (Section
\ref{strongrigid}).)  In order to include all of the real forms of $G$ one
must introduce algebraic coverings
$$1 \rightarrow \hat{J} \rightarrow {^\vee}G^{\hat{J}} \rightarrow
{^\vee}G \rightarrow 1.$$
in which we take $\hat{J}$ to be a finite abelian group.  The
more general L-packets, which encompass all real forms in an inner
class, take the form
$$\Pi^{\mathrm{ABV}}_{\phi,J} = \left\{ \pi^{\mathrm{ABV}}_{\tau}: \tau  \in ({^\vee}G_{\phi}^{\hat{J}}/({^\vee}G_{\phi}^{\hat{J}})^{0})^{\wedge} 
\right\}.$$
Here, $J$ is a (sufficiently large) finite subgroup of the centre $Z(G)$, defined over
$\mathbb{R}$, and is related to $\hat{J}$ through \cite{abv}*{Lemma
  10.2}.  The real forms parameterized by the characters $\tau$ are
called \emph{strong real forms of type $J$}.  

In \cite{vogan_local_langlands} Vogan presented ideas towards
defining compound L-packets for reductive groups over p-adic fields
as well.  A basic problem in this endeavour is to extend the notion of
strong real form to groups over local fields.  In characteristic zero,
this problem was solved by Kaletha,
who described the analogue as a \emph{rigid inner twist}
(\cite{kal}).  Kaletha combined his theory of rigid inner twists with
the work of Langlands and Shelstad on L-packets for real groups.  In
doing so, he defined compound L-packets for \emph{tempered}
L-homomorphisms (\cite{kal}*{Section 5.4}).  Let us assume for the
moment that $\phi$ is tempered, \emph{i.e.} has
bounded image in ${^\vee}G$.  The compound L-packets of
Kaletha, Langlands and Shelstad run over representations of strong
real forms and are in one-to-one correspondence with the characters of
${^\vee}G_{\phi}^{\hat{J}}/({^\vee}G_{\phi}^{\hat{J}})^{0}$.  We
denote these packets by
$$\Pi^{\mathrm{KLS}}_{\phi,J} = \left\{ \pi^{\mathrm{KLS}}_{\tau}: \tau  \in ({^\vee}G_{\phi}^{\hat{J}}/({^\vee}G_{\phi}^{\hat{J}})^{0})^{\wedge} 
\right\}.$$
Our main theorem is
\begin{thm}
  \label{mainthm}
Suppose $\phi$ is tempered. Then  for all $\tau \in
  ({^\vee}G_{\phi}^{\hat{J}}/({^\vee}G_{\phi}^{\hat{J}})^{0})^{\wedge}$
$$\pi^{\mathrm{ABV}}_{\tau} = \pi^{\mathrm{KLS}}_{\tau}.$$ 
\end{thm}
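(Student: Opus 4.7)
The plan is to proceed in two stages: first, show that for each character $\tau$ both constructions are representations of the same strong real form; second, show that within the classical Langlands L-packet on that real form the two constructions select the same representation.

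For the first stage, I would set up a dictionary between the parameters on each side. A character $\tau$ of ${^\vee}G_{\phi}^{\hat{J}}/({^\vee}G_{\phi}^{\hat{J}})^{0}$ determines a strong real form via its restriction to the image of $\hat{J}$, and in both the ABV framework and the Kaletha framework this restriction factors through a Kottwitz-type isomorphism identifying characters of the relevant central subgroup with classes in Galois cohomology classifying strong real forms of type $J$. I would verify that the two Kottwitz isomorphisms agree (after matching $\hat{J}$ with $J$ via \cite{abv}*{Lemma 10.2} and Kaletha's gerbe-cohomology description of rigid inner twists). This reduces the theorem to a statement about a single fixed real form $G_\tau$.

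Having fixed $G_\tau$, both $\pi^{\mathrm{ABV}}_\tau$ and $\pi^{\mathrm{KLS}}_\tau$ lie in the classical Langlands L-packet $\Pi_\phi(G_\tau(\mathbb{R}))$, which is an unordered finite set. The content of the theorem is thus that the two labelings of this set by characters of the component group coincide. Since $\phi$ is tempered, the KLS parameterization is pinned down by endoscopic character identities normalized by a Whittaker datum on the quasisplit form. I would invoke the analogous character-identity characterization on the ABV side, which follows from the microlocal/geometric construction of \cite{abv} together with the matching of stable and endoscopic characters proved there, and check that the two normalizations use the same Whittaker datum and the same sign conventions. The core is thus to show that a single family of endoscopic identities uniquely characterizes the labeling, and that both parameterizations satisfy it.

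The main obstacle is expected to be the bookkeeping between the two cohomological frameworks: ABV work inside the category of coverings ${^\vee}G^{\hat{J}} \to {^\vee}G$ with corresponding strong real forms realized inside a finite central cover, while Kaletha works with a gerbe banded by a pro-finite inverse limit and with rigid inner twists classified by $H^1(u \to W, Z \to G)$. Translating one cohomological classification into the other in a way that commutes with Kottwitz's isomorphism, the pairing with the component group, and the Whittaker normalization, will occupy the bulk of the argument; once this translation is in place the comparison of character identities should be comparatively direct.
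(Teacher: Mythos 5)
Your proposal diverges from the paper's actual strategy in a way that hides the main technical difficulty. The paper does not characterize the two parameterizations via endoscopic character identities and then compare normalizations; indeed it argues in the \emph{opposite} logical direction, deriving the agreement of the tempered endoscopic character identities as a \emph{corollary} of Theorem~\ref{mainthm} (see the discussion of \cite{abv}*{\emph{p.}\ 289} in the Introduction). If you tried to run the argument in your order, you would need to show independently that the ABV construction and the Kaletha--Langlands--Shelstad construction satisfy identically-normalized endoscopic identities on all elliptic endoscopic groups, and that this system of identities pins down the bijection with $({^\vee}G_{\phi}^{\hat{J}}/({^\vee}G_{\phi}^{\hat{J}})^{0})^{\wedge}$. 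Establishing the first of those claims without already having the theorem is essentially as hard as the theorem, and the ABV character identities live in a microlocal/geometric framework that is not immediately commensurable with Shelstad's harmonic-analytic normalizations.

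More importantly, your proposal does not engage with the concrete obstruction that occupies Sections~\ref{compcomp}--\ref{compsrf}. The two constructions do not a priori land in the same picture on a fixed real form: ABV anchors the parameter $\phi$ to a maximal torus ${^d}T$ chosen to be maximally split inside the Levi ${^d}L$ (\ref{dL}), whereas the KLS construction anchors it to the torus $\mathcal{T}_{1}$ obtained by Cayley transform with respect to ${^\vee}\Delta_{\phi}$ alone. When the infinitesimal character is singular these are genuinely different Cartan subgroups, related by a further chain of imaginary-noncompact Cayley transforms $c'$ as in (\ref{t1td}). The heart of the paper is Lemma~\ref{toralquo} through Proposition~\ref{toralquo4}, which track what Cayley transforms do to the component groups (\ref{group1}) and (\ref{group2}), and Corollary~\ref{samedelta}, which checks that the associated strong real forms $\delta(\tau_{d})$ and $\delta(\tau_{1})$ coincide (your ``Kottwitz isomorphisms agree'' step, carried out by explicit computation with $\exp(\uppi i\lambda_{1})\delta_{q}$). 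Once the strong real form is matched, the two representations are built from characters of the two \emph{different} Cartan subgroups $T_{1}$ and $d'\cdot T_{1}$, and the equality of the resulting standard representations is exactly what the Hecht--Schmid character identity (\ref{hechtschmid}) provides, one real root $\beta_j$ at a time. Your proposal would need an independent mechanism to absorb this torus mismatch, and the endoscopic-identity route does not supply one.
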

The equation in this theorem is to be interpreted as an equality of
infinitesimal equivalence classes of irreducible
representations. It should also be noted that  $\tau$
determines a strong real form using
the approach of Adams-Barbasch-Vogan, and determines another strong real form
using the approach of Kaletha.  Kaletha's approach relies heavily on
Galois cohomology, and this is completely absent in the approach of
Adams-Barbasch-Vogan.  It is implicit in the theorem that the
two strong real forms coincide. 

An essential feature in the two approaches is the choice of a maximal
torus related to $\phi$.  The choice of tori in the two approaches
differs significantly when the representations of the
L-packet have singular infinitesimal character.  In this case we
relate the two different tori through Cayley transforms. The
representations are defined through the two tori and we identify them
through Hecht-Schmid character identities (\cite{Speh-Vogan}*{Section
  5}).  

There are two corollaries to Theorem \ref{mainthm} to which
we wish to allude.  The
first pertains to  the assumption of $\phi$ being tempered.  The only
reason for this assumption is the absence of the definition of the
compound L-packets in \cite{kal}*{Section 5.6} for general $\phi$.
The extension of this definition to arbitrary $\phi$ may be completed by
imitating  Langlands' definition of arbitrary L-packets from
tempered ones (\cite{borel}*{Section 11.3}). One starts with the
compound essentially tempered L-packet of a Levi subgroup.  The general L-packet is then obtained by
taking  Langlands quotients of representations induced from the Levi
subgroup.  Although Adams-Barbasch-Vogan do not define their L-packets
using intermediate Levi subgroups, their definition is also 
consistent with this approach.  In this way, the extension of Theorem
\ref{mainthm} to arbitrary $\phi$ is an unremarkable exercise.

The second corollary pertains to the theory of endoscopy.  Both Kaletha and Adams-Barbasch-Vogan define a notion
of endoscopic datum for ${^\vee}G^{\hat{J}}$ which extends the usual
one for ${^\vee}G$ (\cite{kal}*{Section 5.3}, \cite{abv}*{Definition
  26.15}), \cite{Langlands-Shelstad}).  In
the absence of the technicality of $z$-extensions
(\cite{Langlands-Shelstad}*{Section 4.4}), both definitions are easily
seen to coincide, and to boil down to an element $s \in {^\vee}G^{\hat{J}}$ and
a connected reductive group $H$.  Let us restrict to this setting and
further assume that $\phi$ is a tempered L-parameter for $H$.
In this setting $\phi$ yields a compound L-packet for $H$ and also for
$G$.
Suffice it to say that both
Kaletha and Adams-Barbasch-Vogan prove character identities between
the resulting L-packets (\cite{kal}*{(5.11), Proposition 5.10},
\cite{abv}*{Proposition 26.7, Definition 26.18}).   The chief
coefficients in both character identities are of the from $\tau(s)$,
where $\tau$ is given in Theorem \ref{mainthm}.  From the
equivalence of the representations attached to $\tau$ it becomes apparent
that both endoscopic character identities are the same.  This fact
implies that the endoscopic lifting of tempered L-packets in the two
perspectives is the same as well. (See \cite{abv}*{\emph{p.} 289}, where
this is left as a ``straightforward exercise''.)

We anticipate that Theorem \ref{mainthm} will be valuable in similar
endoscopic identities for compound Arthur packets of classical and
unitary groups (\cite{Arthur}*{Chapter 9}, \cite{Mok}, \cite{Kaletha-Minguez},
\cite{MR3},  \cite{abv}*{Theorem 26.25}, \cite{aam}).

This work is organized as follows.  In Section \ref{strongrigid} we
review the equivalent definitions of strong real forms and rigid inner twists
over the real numbers.  We recall how representations are attached to these
objects, and how pairings are attached to these objects.  The
relationships between the attached representations and pairings are
discussed.  Section \ref{abvlp} introduces the L-packets of \cite{abv}
without any assumptions on $\phi$.  Section \ref{klslp} introduces the
L-packets of \cite{kal} for tempered $\phi$.   The comparison of the
two kinds of L-packets requires a comparison of certain component
groups and the pairings between them and strong real forms.  This is
done in Sections \ref{compcomp}-\ref{compsrf} and is the core of the
paper. 
The main point is that the component groups differ by Cayley
transforms and that representations in the packets are seen to be
equivalent through  Hecht-Schmid character identities.   This is
briefly explained in Section \ref{thmsec}.

\section{Strong real forms and rigid inner twists}
\label{strongrigid}

We first recall the notion of strong real form in \cite{abv}.  We then review how Kaletha's notion of rigid inner twists, which are valid for groups over local fields, are equivalent to strong real forms when  the local field is $\mathbb{R}$ (\cite{kal}*{Section 5.2}).

We mention in passing that there is yet another equivalent notion to
strong real forms, namely that of \emph{strong involutions}
(\cite{Adams-Fokko}*{Definition 5.5}, \cite{Adams-Taibi}*{Remark
  8.13}).  Although we do not pursue strong involutions here, they are
often, for good reasons,  preferred over strong real forms.  One could
reformulate our results purely in terms of strong involutions.

Let $G$ be a connected reductive complex algebraic group.  Let
$G^{\Gamma}$ be a group containing $G$ as an index two subgroup, with
the additional condition that every element in $G^{\Gamma} -G$ acts by
conjugation on $G$ as an antiholomorphic automorphism.  More
precisely, given any $\delta \in G^{\Gamma} - G$ and an algebraic function $f \in \mathbb{C}[G]$, the function
$$g \mapsto \overline{f(\mathrm{Int}(\delta)(g))}, \quad g \in G$$
is also in $\mathbb{C}[G]$.  The group $G^{\Gamma}$ is a \emph{weak extended group containing} $G$ (\cite{abv}*{Definition 2.13}).  A \emph{strong real form of} $G^{\Gamma}$ is an element $\delta \in G^{\Gamma} -G$ such that $\delta^{2} \in G$ belongs to the centre $Z(G)$ and has finite order.  The strong real form $\delta$ determines a real form $\mathrm{Int}(\delta)$ and a corresponding group of real points
$$G(\mathbb{R}, \delta) = \{ g \in G : \mathrm{Int}(\delta)(g) = g \}.$$
Two strong real forms of $G^{\Gamma}$ are \emph{equivalent} if they are conjugate under $G$.

These definitions are enriched by the addition of a $G$-conjugacy
class $\mathcal{W}$ of a triple
\begin{equation}
  \label{whittdatum}
  (\delta_{q}, N, \chi)
\end{equation}
in which $\delta_{q}$ is a strong real form of $G^{\Gamma}$, $N \subset G$ is a maximal unipotent subgroup normalized by $\delta_{q}$, and $\chi$ is a unitary character of $N(\mathbb{R}, \delta_{q})$ which is non-trivial on each simple restricted root subgroup.  We fix such a $\mathcal{W}$.  The pair $(G^{\Gamma}, \mathcal{W})$ is called an \emph{extended group for}  $G$ (\cite{abv}*{Definition 1.12}).  In essence, $\mathcal{W}$ specifies a Whittaker datum for the quasisplit real form $G(\mathbb{R}, \delta_{q})$.

The extended groups for $G$ are classified in \cite{abv}*{Proposition 3.6}.  There it is shown that two extended groups $(G^{\Gamma}, \mathcal{W})$ and $((G^{\Gamma})', \mathcal{W}')$ for $G$ are equivalent only if $\delta_{q}^{2} = (\delta_{q}')^{2}$ for any choices $(\delta_{q},N,\chi) \in \mathcal{W}$ and $(\delta_{q}',N, \chi) \in \mathcal{W}'$.  This allows for inequivalent extended groups when the centre of $G$ is non-trivial.  Nevertheless, as noted on \cite{abv}*{\emph{p.} 46}, there appears to be no reason to prefer one extended group over another.  Thus, from now on, we assume that $\delta_{q}^{2} = 1 \in Z(G)$.  We fix $(\delta_{q},N,\chi) \in \mathcal{W}$ so that as an internal semidirect product
\begin{equation}
  \label{extG}
  G^{\Gamma} = G \rtimes \langle \delta_{q} \rangle.
\end{equation}
This also fixes the unique Borel subgroup $B \supset N$.  Let
$Z(G)^{\mathrm{tor}}$ be the torsion subgroup of $Z(G)$, \emph{i.e.}
the elements of finite order.  Let $J$ be a subgroup of $ Z(G)^{\mathrm{tor}}$.  A strong real form $\delta \in G^{\Gamma} -G$ is \emph{of type J} if $\delta^{2} \in J$ (\cite{abv}*{Definition 10.10}). We say that it is a \emph{pure real form} if it is of type $\{1\}$, \emph{i.e.}  $\delta^{2} = 1$.  Obviously, $\delta_{q}$ is a pure real form.

Kaletha defines analogues of strong real forms uniformly for groups
over any local field of characteristic zero (\cite{kal}*{Section
  5.1}).  These analogues are called rigid inner twists.  We summarize
and paraphrase his construction of rigid inner twists over the real
local field.   A \emph{rigid inner twist of} $(G, \delta_{q})$ is a
pair $(\psi, z)$.  The first element $\psi: G \rightarrow G'$ is a
$\mathbb{C}$-isomorphism of algebraic groups in which the group $G$
has the quasisplit $\mathbb{R}$-structure defined by
$\mathrm{Int}(\delta_{q})$. The group $G'$ is taken to have an
$\mathbb{R}$-structure defined by $\sigma'$ such that  $\psi^{-1}
\circ \sigma' \circ \psi \circ \mathrm{Int}(\delta_{q}) =
\mathrm{Int}(g_{\psi})$ for some $g_{\psi} \in G$.  The second element
defining the rigid inner twist is a 1-cocycle $z \in Z^{1}(W,G)$.
Here, $W$ is an extension of $\Gamma = \mathrm{Gal}(\mathbb{C}/\mathbb{R})$
$$1 \rightarrow u \rightarrow W \rightarrow \Gamma \rightarrow 1$$
in which $u$ is a pro-algebraic group (\cite{kal}*{Sections 3.1-3.2}).  The 1-cocycle is defined to satisfy two conditions.  The first is that the restriction $z_{|u}$ is an algebraic homomorphism into a finite subgroup $J$ of $Z(G)$, defined over $\mathbb{R}$.  The second is that $\mathrm{Int}(z(\sigma)) = \mathrm{Int}( g_{\psi})$, that is
\begin{equation}
  \label{zisg}
\psi^{-1} \circ \sigma' \circ \psi \circ \mathrm{Int}(\delta_{q}) = \mathrm{Int}(z(\sigma)),
\end{equation}
where $\sigma$ is a canonical element in $W$ which maps to the non-trivial element in $\Gamma$.  In this case, the rigid inner twist $(\psi,z)$ is said to be \emph{realized by} $J$.  Observe that the finiteness condition on the subgroup $J \subset Z(G)$ is stronger than the condition on $J$ for strong real forms stated above. A rigid inner twist is
\emph{pure} if it is realized by $\{1\}$.

Let $(\psi,z)$ and $(\psi_{1}, z_{1})$ be rigid inner twists of $(G, \delta_{q})$ with $\psi: G \rightarrow G'$ and $\psi_{1}: G \rightarrow G_{1}'$.  An \emph{isomorphism} between them is a pair $(f,g)$ in which $f:G' \rightarrow G_{1}'$ is an $\mathbb{R}$-isomorphism and $g \in G$.  This pair is required to satisfy: 
\begin{enumerate}
\item $\psi_{1} \circ \mathrm{Int}(g) = f \circ \psi$
 
\item $z_{1}(w) = g z(w) \, w\cdot(g^{-1})$ for all  $w \in W$ ($W$ acts on $G$ through $\Gamma$ alone.)
\end{enumerate}
The first property in the definition is equivalent to the commutativity of the diagram
$$\xymatrix@1{G \ar[r]^{\psi} \ar[d]_{\mathrm{Int}(g)} & G' \ar[d]^{f} \\
  G \ar[r]_{\psi_{1}}& G_{1}'}$$
Let us make the $\mathbb{R}$-structures of the groups in this diagram more explicit by writing
$$\xymatrix@1{(G,\mathrm{Int}(\delta_{q})) \ar[r]^{\psi} \ar[d]_{\mathrm{Int}(g)} & (G',\sigma') \ar[d]^{f} \\
  (G,\mathrm{Int}(\delta_{q}))  \ar[r]_{\psi_{1}}& (G_{1}',\sigma_{1}')}$$
Here, the $\Gamma$-action of the group in each pair is given by the automorphism in the second entry, and $f \circ \sigma' \circ f^{-1} = \sigma_{1}'$. 
\begin{lem}
  \label{idiso}
Let $\mathrm{id}:G \rightarrow G$ be the identity map and $\sigma \in
W$ be the canonical element mapping to the non-trivial element in
$\Gamma$ (\ref{zisg}).  Suppose $(\psi,z)$ is a rigid inner twist of
$(G,\delta_{q})$.  Then $(\psi^{-1},1)$ is an isomorphism between
$(\psi,z)$ and $(\mathrm{id},z)$, where the $\mathbb{R}$-structure of
$G$ in the codomain of $\mathrm{id}$ is defined by $\mathrm{Int}(z(\sigma) \delta_{q})$.
\end{lem}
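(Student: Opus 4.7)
The plan is to verify directly that the pair $(\psi^{-1}, 1)$ satisfies the two defining conditions of an isomorphism of rigid inner twists between $(\psi,z)$ and $(\mathrm{id},z)$, and, as a prerequisite, to check that $(\mathrm{id},z)$ with $\mathbb{R}$-structure $\mathrm{Int}(z(\sigma)\delta_{q})$ on the codomain is itself a rigid inner twist of $(G,\delta_{q})$. Everything follows from equation (\ref{zisg}) applied to $(\psi,z)$ together with the normalization $\delta_{q}^{2}=1$ fixed after (\ref{extG}).

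First I would dispose of the two algebraic conditions with $g=1$: condition (1) becomes $\mathrm{id}\circ\mathrm{Int}(1)=\psi^{-1}\circ\psi$, which is trivial; condition (2) reads $z(w)=1\cdot z(w)\cdot w\cdot(1^{-1})=z(w)$ since $w$ acts trivially on $1\in G$. So the only substantive point is that $f=\psi^{-1}$ is an $\mathbb{R}$-isomorphism from $(G',\sigma')$ to $(G,\mathrm{Int}(z(\sigma)\delta_{q}))$, \emph{i.e.}\ that $\psi^{-1}\circ\sigma'\circ\psi = \mathrm{Int}(z(\sigma)\delta_{q})$.

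Here is where (\ref{zisg}) enters. Rearranging that identity gives
$$\psi^{-1}\circ\sigma'\circ\psi \;=\; \mathrm{Int}(z(\sigma))\circ\mathrm{Int}(\delta_{q})^{-1}.$$
Because $\delta_{q}^{2}=1$, the automorphism $\mathrm{Int}(\delta_{q})$ is its own inverse, so the right-hand side equals $\mathrm{Int}(z(\sigma))\circ\mathrm{Int}(\delta_{q})=\mathrm{Int}(z(\sigma)\delta_{q})$, as required. The same computation, read in reverse, shows that $(\mathrm{id},z)$ with target $\mathbb{R}$-structure $\mathrm{Int}(z(\sigma)\delta_{q})$ satisfies (\ref{zisg}), so it is a genuine rigid inner twist realized by the same $J$ that realizes $(\psi,z)$.

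There is no real obstacle in the argument; the only point that deserves attention is the use of the global convention $\delta_{q}^{2}=1$ to convert $\mathrm{Int}(\delta_{q})^{-1}$ into $\mathrm{Int}(\delta_{q})$, without which the expected Galois action would be $\mathrm{Int}(z(\sigma)\delta_{q}^{-1})$ rather than $\mathrm{Int}(z(\sigma)\delta_{q})$. Once this is in place, the lemma is a bookkeeping exercise that simply repackages the data of the rigid inner twist $(\psi,z)$ as a twist in which the underlying $\mathbb{C}$-isomorphism is the identity.
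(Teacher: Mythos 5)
Your proof is correct and takes essentially the same route as the paper: dispose of condition (2) trivially with $g=1$, then reduce condition (1) for $f=\psi^{-1}$ to the identity (\ref{zisg}), with the added (welcome) observation that $(\mathrm{id},z)$ is itself a rigid inner twist realized by the same $J$. One small refinement: to convert $\mathrm{Int}(\delta_{q})^{-1}$ into $\mathrm{Int}(\delta_{q})$ you only need $\delta_{q}^{2}\in Z(G)$, which holds for any strong real form, so the stronger normalization $\delta_{q}^{2}=1$, although in force throughout the paper and certainly sufficient, is not actually essential at this step.
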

\begin{proof} The second property in the definition of isomorphism of
  rigid inner twists is trivially satisfied.   We must therefore show
  the first property of the definition for $f = \psi^{-1}$,
  \emph{i.e.} show that  
$$\xymatrix@1{(G,\mathrm{Int}(\delta_{q})) \ar[r]^{\psi} \ar[d]_{\mathrm{Int}(1)} & (G',\sigma') \ar[d]^{\psi^{-1}} \\
  (G,\mathrm{Int}(\delta_{q}))  \ar[r]_{\mathrm{id}}&
    (G,\mathrm{Int}(z(\sigma)\delta_{q}) )}$$
  is a commutative diagram.  
The commutativity follows from (\ref{zisg}).
\end{proof}

Suppose $(\psi, z)$ is a rigid inner twist of $(G,\delta_{q})$.  According to \cite{kal}*{Theorem 5.2} the map
\begin{equation}
  \label{5.2map}
  (\psi,z) \mapsto z(\sigma)\delta_{q}
\end{equation}
takes values in the set of strong real forms of $G^{\Gamma}$.  In
addition, the map passes to a bijection from isomorphism classes of
rigid inner twists to equivalence classes of strong real forms.  It
carries  rigid inner twists realized by $J \subset Z(G)$ to strong real forms of type $J$.   In particular, it carries pure rigid inner twists to pure real forms.

\subsection{Representations}

A \emph{representation of a strong real form of} $G^{\Gamma}$ is a pair $(\pi, \delta)$ in which $\delta$ is a strong real form of $G^{\Gamma}$ and $\pi$ is an admissible representation of $G(\mathbb{R}, \delta)$.  Two such representations $(\pi, \delta)$ and $(\pi_{1}, \delta_{1})$ are \emph{equivalent} if there exists $g \in G$ such that $\delta_{1} = g\delta g^{-1}$ and $\pi_{1}$ is infinitesimally equivalent to $\pi \circ \mathrm{Int}(g^{-1})$ (\cite{abv}*{Definition 2.13}).

In the same vein, a \emph{representation of a rigid inner twist of}
$(G,\delta_{q})$ is a triple $(\psi, z, \pi)$ in which $\psi:G
\rightarrow G'$ is the $\mathbb{C}$-isomorphism of a rigid inner twist
$(\psi,z)$ and $\pi$ is an admissible representation of
$G'(\mathbb{R})$.  Suppose $(\psi_{1}, z_{1}, \pi_{1})$ is another
representation of a rigid inner twist of $(G, \delta_{q})$ with
$\psi_{1} : G \rightarrow G_{1}'$.  An \emph{isomorphism} between $(\psi,z,\pi)$ and $(\psi_{1}, z_{1}, \pi_{1})$ is an isomorphism $(f,g)$ between $(\psi,z)$ and $(\psi_{1},z_{1})$ such that 
$\pi_{1}$ is infinitesimally equivalent to $\pi \circ f^{-1}$.

Lemma \ref{idiso} implies that any representation $(\psi,z, \pi)$ of a rigid inner twist of $(G,\delta_{q})$ is isomorphic to $(\mathrm{id}, z, \pi \circ \psi)$.  Furthermore, the map (\ref{5.2map}) makes it  clear that $(z(\sigma)\delta_{q}, \pi \circ \psi)$ is a representation of a strong real from of $G^{\Gamma}$.  This defines a map
\begin{equation}
  \label{repmap}
  (\psi,z, \pi) \mapsto (z(\sigma)\delta_{q}, \pi \circ \psi)
\end{equation}
from representations of rigid inner twists of $(G,\delta_{q})$ to representations of strong real forms of $G^{\Gamma}$.
\begin{prop}
The map (\ref{repmap}) passes to a bijection of isomorphism classes of representations of rigid inner twists of $(G, \delta_{q})$ to equivalence classes of representations of strong real forms of $G^{\Gamma}$. 
\end{prop}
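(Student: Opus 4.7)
The plan is to bootstrap from the bijection (\ref{5.2map}) between isomorphism classes of rigid inner twists of $(G,\delta_{q})$ and equivalence classes of strong real forms of $G^{\Gamma}$, and to check that the representation data transfers compatibly under the explicit formula $(\psi, z, \pi) \mapsto (z(\sigma)\delta_{q}, \pi \circ \psi)$. Most of the work is in unpacking the two notions of isomorphism/equivalence against this formula.

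For well-definedness, suppose $(f,g)$ is an isomorphism between rigid inner twists $(\psi,z)$ and $(\psi_{1},z_{1})$, and that $\pi_{1}$ is infinitesimally equivalent to $\pi \circ f^{-1}$. Setting $\delta = z(\sigma)\delta_{q}$ and $\delta_{1}= z_{1}(\sigma)\delta_{q}$, I would use the cocycle condition (2) together with the fact that $\sigma$ acts on $G$ through $\mathrm{Int}(\delta_{q})$ to compute
$$\delta_{1} = z_{1}(\sigma)\delta_{q} = g\, z(\sigma)\, (\sigma\cdot g^{-1})\, \delta_{q} = g\, z(\sigma)\, \delta_{q}\, g^{-1} \delta_{q}^{-1}\delta_{q} = g\, \delta\, g^{-1}.$$
The commuting square $\psi_{1} \circ \mathrm{Int}(g) = f \circ \psi$ yields $f^{-1} = \psi \circ \mathrm{Int}(g^{-1}) \circ \psi_{1}^{-1}$, so
$$\pi_{1} \circ \psi_{1} \;\sim\; \pi \circ f^{-1} \circ \psi_{1} \;=\; (\pi \circ \psi) \circ \mathrm{Int}(g^{-1}),$$
which is exactly the equivalence of representations of strong real forms required.

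For injectivity, reverse the computation: an equivalence $(\delta,\pi \circ \psi) \sim (\delta_{1}, \pi_{1} \circ \psi_{1})$ is realised by some $g \in G$ with $\delta_{1} = g\delta g^{-1}$; the bijection (\ref{5.2map}) then produces $f: G' \to G_{1}'$ so that $(f,g)$ is an isomorphism of rigid inner twists, and running the above display backwards gives $\pi_{1} \sim \pi \circ f^{-1}$. For surjectivity, given a representation $(\pi',\delta)$ of a strong real form, apply (\ref{5.2map}) to produce a rigid inner twist $(\psi,z)$ with $z(\sigma)\delta_{q} = \delta$; by (\ref{zisg}) the map $\psi$ restricts to an $\mathbb{R}$-isomorphism $G(\mathbb{R},\delta) \to G'(\mathbb{R})$, so $\pi := \pi' \circ \psi^{-1}$ is an admissible representation of $G'(\mathbb{R})$ and the triple $(\psi,z,\pi)$ maps to $(\delta, \pi')$. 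The only step requiring real care is the compatibility between the twisted cocycle relation for $z,z_{1}$ and the ordinary conjugation relation between strong real forms in $G^{\Gamma}$; once this is in hand, everything reduces to chasing the diagrams already displayed in the excerpt and invoking Lemma \ref{idiso}.
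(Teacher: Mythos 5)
Your proposal is correct and follows essentially the same route as the paper: it reduces the problem to the bijection (\ref{5.2map}) and then unpacks the two notions of isomorphism/equivalence against the explicit formula for (\ref{repmap}), with the same key computation $z_{1}(\sigma)\delta_{q} = g\,z(\sigma)\,(\sigma\cdot g^{-1})\,\delta_{q} = g\,z(\sigma)\delta_{q}\,g^{-1}$ driving well-definedness and injectivity. The only cosmetic difference is that the paper normalizes to $\psi=\psi_{1}=\mathrm{id}$ via Lemma \ref{idiso} before checking surjectivity and injectivity, whereas you twist $\pi'$ by $\psi^{-1}$ directly; both routes leave the same residual verification (that the constructed pair satisfies the cocycle condition on all of $W$, not just at $\sigma$) implicit.
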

\begin{proof}
  Suppose $(f,g)$ is a isomorphism between  $(\psi,z, \pi)$ and $(\psi_{1}, z_{1}, \pi_{1})$.  We must prove that $(z(\sigma)\delta_{q}, \pi \circ \psi)$ is equivalent to $(z_{1}(\sigma)\delta_{q}, \pi_{1} \circ \psi_{1})$.  By definition
  $$\pi \circ \psi = \pi_{1} \circ f \circ \psi = \pi_{1} \circ \psi_{1} \circ \mathrm{Int}(g)$$
  and so
  \begin{align*}
    (z_{1}(\sigma)\delta_{q}, \pi_{1} \circ \psi_{1}) & = (z_{1}(\sigma)\delta_{q}, \pi \circ \psi \circ \mathrm{Int}(g^{-1}))\\
    &\sim (g^{-1} z_{1}(\sigma)  \delta_{q} g, \pi \circ \psi)\\
    & = (g^{-1}z_{1}(\sigma) \, \mathrm{Int}(\delta_{q})( g) \, \delta_{q}, \pi \circ \psi).
\end{align*}
  The action of $\sigma \in W$ on $G$ is by
  $\mathrm{Int}(\delta_{q})$.  The second property in the definition
  of the isomorphism $(f,g)$ therefore implies
  $$ (g^{-1}z_{1}(\sigma) \, \mathrm{Int}(\delta_{q})( g) \delta_{q}, \pi \circ \psi) = (z(\sigma) \delta_{q}, \pi \circ \psi),$$
  completing the proof of the equivalence. This proves that (\ref{repmap}) passes to a map from isomorphism classes to equivalence classes.

  For surjectivity, suppose $(\delta, \pi)$ is a representation of a strong real form of $G^{\Gamma}$. Using the surjectivity of (\ref{5.2map}) and Lemma \ref{idiso} we may take $\delta = z(\sigma) \delta_{q}$ for a rigid inner twist $(\mathrm{id}, z)$ in which the codomain of $\mathrm{id}$ has $\mathbb{R}$-structure defined by $\mathrm{Int}(z(\sigma) \delta_{q})$. This ensures that $(\mathrm{id}, z, \pi)$ is a representation of a rigid inner twist of $(G, \delta_{q})$.  It is obvious that $(\mathrm{id}, z, \pi)$ maps to $(\delta, \pi)$ under (\ref{repmap}).

  For injectivity, we may assume without loss of generality that $(\mathrm{id},z, \pi)$ and $(\mathrm{id},z_{1}, \pi_{1})$ map to $(\delta, \pi)$ and $(\delta_{1}, \pi_{1})$ respectively.  If there exists $g \in G$ such that
  $$(\delta_{1}, \pi_{1}) = (g \delta g^{-1}, \pi \circ \mathrm{Int}(g^{-1}))$$
then it is straightforward to show that $(\mathrm{id}, g)$ is an isomorphism  between $(\mathrm{id},z, \pi)$ and $(\mathrm{id},z_{1}, \pi_{1})$.
\end{proof}

\subsection{Pairings for real algebraic tori}

Pairings between equivalence classes of strong real forms of tori and
certain component groups appear in both \cite{abv} and \cite{kal}.  We
describe the pairings first in the context of \cite{abv} and then show
that the pairings of \cite{kal} agree with them when working over the real numbers.

Let $T$ be a complex algebraic torus defined over $\mathbb{R}$, \emph{i.e.} with a $\Gamma$-action.  We define the extended group
$$T = T \rtimes \langle \delta_{q} \rangle, \ \mathcal{W} = \{\delta_{q} \},$$
where $\delta_{q}^{2} = 1$ and $\delta_{q}$ acts on $T$ by the non-trivial element $\sigma  \in \Gamma$.  Set $T(\mathbb{R}) = T(\mathbb{R}, \delta_{q})$.  The strong real forms of $T$ are the elements of the form $t \delta_{q} \in T^{\Gamma}$ for which
$$(t \delta_{q})^{2} =  t \delta_{q}(t)\, \delta_{q}^{2} = t \delta_{q}(t)$$
is of finite order.
Clearly, $\mathrm{Int}(t\delta_{q}) = \mathrm{Int}(\delta_{q})$ and so
$$T(\mathbb{R}, t \delta_{q}) = T(\mathbb{R})$$
for any strong real form of $T^{\Gamma}$.

Let ${^\vee}T^{\Gamma} =  {^\vee}T \rtimes \langle {^\vee}\delta_{q} \rangle$ be the L-group of $T$.  Just as for the extended group, conjugation by any element ${^\vee}T^{\Gamma}- {^\vee}T$ defines the same automorphism ${^\vee}\theta = \mathrm{Int}( {^\vee}\delta_{q} )$ of ${^\vee}T$.  The transpose ${^\vee}\theta^{\intercal}$ of ${^\vee}\theta$ defines an automorphism of $X^{*}({^\vee}T) = X_{*}(T)$.  The Lie algebra of $T$ may be identified with $(X_{*}(T) \otimes \mathbb{C})/X_{*}(T)$ via the map $\exp(2 \uppi i \cdot)$.  Under this identification, the differential of the action of $\sigma \in \Gamma$ is equal to ${^\vee}\theta^{\intercal}$ (\cite{abv}*{Lemma 9.9 (d)}).

According to \cite{abv}*{Lemma 9.9, Proposition 9.10}, the map
\begin{equation}
  \label{9.10map}
  \lambda_{1} \mapsto \exp(\uppi i \lambda_{1})\, (1-\sigma)T, \quad \lambda_{1} \in (X_{*}(T) \otimes \mathbb{Q})^{-\sigma}/(1-\sigma) X_{*}(T)
\end{equation}
defines an isomorphism onto the group
$$\{t \in T: t \delta_{q}(t) \mbox{ is of finite order}\}/ (1-\sigma)T.$$
In consequence, the map
\begin{equation}
  \label{9.10map1}
\lambda_{1} \mapsto \exp(\uppi i \lambda_{1}) \, \delta_{q}, \quad
\lambda_{1} \in (X_{*}(T) \otimes \mathbb{Q})^{-\sigma}/(1-\sigma)
X_{*}(T)
\end{equation}
passes to a bijection onto the set of equivalence classes of strong real forms of $T^{\Gamma}$.

The restriction of (\ref{9.10map}) to $X_{*}(T)^{-\sigma}$ produces an isomorphism from
$$X_{*}(T)^{-\sigma}/ (1-\sigma)X_{*}(T) $$
onto
$$\{t \in T: t\delta_{q}(t) = 1\}/(1-\sigma)T = \{t \in T: t\sigma(t) = 1\}/(1-\sigma)T \cong H^{1}(\Gamma, T)$$
which is the set of equivalence classes of pure real forms of
$T^{\Gamma}$.  Tate-Nakayama duality identifies the latter group with
the characters of the component group
$${^\vee}T^{{^\vee}\theta}/({^\vee}T^{{^\vee}\theta})^{0}$$
(\cite{kottwitz84}*{(3.3.1)}).  In summary,
\begin{align}
 \label{pureforms}
\mbox{Equivalence classes of pure real forms}
 &\cong  X_{*}(T)^{-\sigma}/ (1-\sigma)X_{*}(T)\\
\nonumber &\cong \mathrm{Hom} \left( {^\vee}T^{{^\vee}\theta}/({^\vee}T^{{^\vee}\theta})^{0}, \mathbb{C}^{\times} \right).
 \end{align}
 If one identifies the equivalence classes of pure real forms with the groups on the right, then one may equally well say that there is a perfect pairing between ${^\vee}T^{{^\vee}\theta}/({^\vee}T^{{^\vee}\theta})^{0}$ and the set of equivalence classes of pure real forms.
 
We would like a similar pairing for equivalence classes of strong real forms.  Towards this end, we already have  isomorphism (\ref{9.10map1}).   For the second isomorphism  we need to introduce the \emph{algebraic universal covering} ${^\vee}T^{\mathrm{alg}}$ of ${^\vee}T$,  which is the projective limit of all finite coverings of ${^\vee}T$.  The algebraic universal covering is part of a short exact sequence
\begin{equation}
  \label{covseq}
  1 \rightarrow \uppi_{1}({^\vee}T)^{\mathrm{alg}} \rightarrow {^\vee}T^{\mathrm{alg}} \rightarrow {^\vee}T \rightarrow 1
  \end{equation}
in which $\uppi_{1}({^\vee}{T})^{\mathrm{alg}}$ is a profinite group (\cite{abv}*{(5.10)}). According to \cite{abv}*{Proposition 9.8 (c)} 
\begin{align}
\label{strongforms}
\mbox{Equivalence classes of strong real forms}
 &\cong  (X_{*}(T) \otimes \mathbb{Q})^{-\sigma}/ (1-\sigma)X_{*}(T)\\
\nonumber &\cong \mathrm{Hom} \left( ({^\vee}T^{{^\vee}\theta})^{\mathrm{alg}}/(({^\vee}T^{{^\vee}\theta})^{\mathrm{alg}} )^{0}, \mathbb{C}^{\times} \right).
 \end{align}
 Here, the group $({^\vee}T^{{^\vee}\theta})^{\mathrm{alg}}$ is the
 preimage of ${^\vee}T^{{^\vee}\theta} \subset {^\vee}T$ in
 (\ref{covseq}).  The isomorphisms of (\ref{strongforms}) amount to a perfect pairing between 
$({^\vee}T^{{^\vee}\theta})^{\mathrm{alg}}/(({^\vee}T^{{^\vee}\theta})^{\mathrm{alg}} )^{0}$ and the set of equivalence classes of strong real forms.

Our next task is to determine the correct pairing for equivalence
classes of strong real forms of type $J \subset T^{\mathrm{tor}}$.  In
order to compare with rigid inner twists, we restrict to the case that
$J$ is finite and defined over $\mathbb{R}$.  Under this assumption,
equivalence classes of strong real forms of type $J$ are a subset of
(\ref{strongforms}) which is related to an isogeny of $T$.  Set
$\bar{T} = T/J$.  As $J$ is defined over $\mathbb{R}$, it is stable under
$\delta_{q}$, and so we may define $\bar{T}^{\Gamma} = \bar{T} \rtimes \langle \delta_{q} \rangle$.  It is an algebraic torus which is related to $T$ through the isogeny
$$\iota: T \rightarrow T/J.$$
The dual isogeny
$$\hat{\iota} : {^\vee}\bar{T} \rightarrow {^\vee}T$$
yields an algebraic cover
$$1 \rightarrow \ker{\hat{\iota}} \rightarrow {^\vee}\bar{T} \rightarrow {^\vee}T \rightarrow 1.$$
As $\ker{\hat{\iota}}$ is finite, there is a surjective map
$$\uppi_{1}({^\vee}T)^{\mathrm{alg}} \rightarrow \ker{\hat{\iota}}.$$
This surjection induces the commutative diagram
$$\xymatrix{1 \ar[r]   & \uppi_{1}({^\vee}T)^{\mathrm{alg}} \ar[r] \ar[d] & {^\vee}T^{\mathrm{alg}} \ar[r]  \ar[d] & {^\vee}T \ar[r] \ar[d]^{\cong} & 1\\
1 \ar[r] & \ker{\hat{\iota}} \ar[r] & {^\vee}\bar{T}\ar[r]  & {^\vee}T \ar[r] & 1}$$
with surjective columns and exact rows (\cite{abv}*{10.10}).
\begin{lem}
\label{hatJ}
 The kernel of $\hat{\iota}$ is isomorphic to $\hat{J} = \mathrm{Hom}(J, \mathbb{C}^{\times})$.
 \end{lem}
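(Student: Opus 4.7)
The plan is a standard Cartier/Pontryagin duality argument on the level of (co)character lattices. First I would apply the functor $X^{*} = \mathrm{Hom}(-,\mathbb{C}^{\times})$ to the defining short exact sequence $1 \to J \to T \to \bar{T} \to 1$, producing the short exact sequence of free $\mathbb{Z}$-modules
$$0 \to X^{*}(\bar{T}) \to X^{*}(T) \to \hat{J} \to 0,$$
where the cokernel is $X^{*}(J) = \mathrm{Hom}(J,\mathbb{C}^{\times}) = \hat{J}$ because $J$ is finite. The two outer terms being free of equal rank encodes that $\iota$ is an isogeny.

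Next I would translate this to the dual side. By construction of the dual isogeny, $\hat{\iota}: {^{\vee}}\bar{T} \to {^{\vee}}T$ corresponds on characters to the injection $X_{*}(T) \hookrightarrow X_{*}(\bar{T})$ induced by $\iota$, since $X^{*}({^{\vee}}S) = X_{*}(S)$ for any torus $S$. For an isogeny of tori, evaluating cocharacters at a point gives the canonical identification of the kernel as the Pontryagin dual of the cokernel of the corresponding lattice map, i.e.
$$\ker \hat{\iota} \;\cong\; \mathrm{Hom}\bigl(X_{*}(\bar{T})/X_{*}(T),\, \mathbb{C}^{\times}\bigr).$$
So the problem reduces to identifying the finite abelian group $X_{*}(\bar{T})/X_{*}(T)$.

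For this I would apply $\mathrm{Hom}(-,\mathbb{Z})$ to the first sequence. Since $X^{*}(T)$ is free, $\mathrm{Ext}^{1}(X^{*}(T),\mathbb{Z}) = 0$; and since $\hat{J}$ is finite, $\mathrm{Hom}(\hat{J},\mathbb{Z}) = 0$. The long exact sequence therefore collapses to
$$0 \to X_{*}(T) \to X_{*}(\bar{T}) \to \mathrm{Ext}^{1}(\hat{J},\mathbb{Z}) \to 0.$$
Using the injectivity of $\mathbb{Q}$ and the sequence $0 \to \mathbb{Z} \to \mathbb{Q} \to \mathbb{Q}/\mathbb{Z} \to 0$ one gets $\mathrm{Ext}^{1}(\hat{J},\mathbb{Z}) \cong \mathrm{Hom}(\hat{J},\mathbb{Q}/\mathbb{Z}) \cong \mathrm{Hom}(\hat{J},\mathbb{C}^{\times})$, and Pontryagin duality for the finite group $\hat{J}$ identifies this canonically with $J$. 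Substituting back, $\ker \hat{\iota} \cong \mathrm{Hom}(J,\mathbb{C}^{\times}) = \hat{J}$, as required.

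The argument is essentially routine duality and no step poses a real obstacle. The only thing worth double-checking is the direction of arrows when moving between $T$ and ${^{\vee}}T$ and ensuring that the chain of identifications is canonical rather than just an abstract isomorphism of finite groups; this is automatic once one uses Pontryagin duality in the last step rather than the non-canonical isomorphism $\mathrm{Ext}^{1}(\hat{J},\mathbb{Z}) \cong \hat{J}$.
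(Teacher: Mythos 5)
Your argument is correct, but it follows a genuinely different route from the paper's. Both proofs begin the same way: dualize the defining sequence $1 \to J \to T \to \bar T \to 1$ to obtain $0 \to X^{*}(\bar T) \to X^{*}(T) \to \hat J \to 0$, and observe that $\ker\hat\iota \cong \mathrm{Hom}\bigl(X_{*}(\bar T)/\hat\iota^{*}X_{*}(T),\,\mathbb{C}^{\times}\bigr)$. From there the paper reduces everything to an explicit lattice computation: pick $\mathbb{Z}$-bases for $X^{*}(T)$ and $X^{*}(\bar T)$, regard $\iota^{*}$ as an integer matrix, and invoke the fact that a matrix and its transpose have the same invariant factors (Smith normal form), so both cokernels are $\bigoplus_{j}\mathbb{Z}/m_{j}\mathbb{Z}$. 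You instead apply $\mathrm{Hom}(-,\mathbb{Z})$ to the character sequence, use that $\mathrm{Hom}(\hat J,\mathbb{Z}) = 0$ and $\mathrm{Ext}^{1}(X^{*}(T),\mathbb{Z}) = 0$ to collapse the long exact sequence, and then identify $\mathrm{Ext}^{1}(\hat J,\mathbb{Z}) \cong \mathrm{Hom}(\hat J,\mathbb{Q}/\mathbb{Z}) \cong J$ via the injective resolution $0 \to \mathbb{Z} \to \mathbb{Q} \to \mathbb{Q}/\mathbb{Z} \to 0$ and Pontryagin duality. The homological route is more conceptual and, as you point out, gives a canonical isomorphism rather than the abstract one supplied by invariant-factor counting; the paper's route is more elementary, avoiding $\mathrm{Ext}$ entirely at the cost of basis choices. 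Either is perfectly adequate for the statement as used downstream.
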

\begin{proof}
 The dual of the short exact sequence
  $$1 \rightarrow J \rightarrow T \stackrel{\iota}{\rightarrow} \bar{T} \rightarrow 1$$
  yields the short exact sequence
  $$1 \rightarrow X^{*}(\bar{T}) \stackrel{\iota^{*}}{\rightarrow} X^{*}(T) \rightarrow \hat{J} \rightarrow 1$$
(\cite{BorelLAG}*{Section 8.12}).
This implies in turn that  $\hat{J} \cong X^{*}(T)/ \iota^{*} X^{*}(\bar{T})$ and
\begin{equation}
    \label{Jiso}
    J \cong \mathrm{Hom}\left( X^{*}(T)/ \iota^{*}X^{*}(\bar{T}),\,  \mathbb{C}^{\times} \right).
\end{equation}
Identical reasoning applied to the dual isogeny implies that
$$  \ker \hat{\iota} \cong \mathrm{Hom}\left( X_{*}(\bar{T})/ \hat{\iota}^{*}X_{*}(T),\,  \mathbb{C}^{\times} \right),$$
where 
\begin{equation}
\label{iotamap}
\hat{\iota}^{*} : X_{*}(T) \rightarrow X_{*}(\bar{T})
\end{equation}
is the transpose of $\iota^{*}$.  
The perfect $\mathbb{Z}$-pairing between $X_{*}(T)$ and $X^{*}(T)$ allows us to rewrite this isomorphism as
$$  \ker \hat{\iota} \cong \mathrm{Hom}\left( \mathrm{Hom}(X^{*}(\bar{T}),\mathbb{Z})/ \hat{\iota}^{*} \mathrm{Hom}(X^{*}(T),\mathbb{Z}),\,  \mathbb{C}^{\times} \right).$$
Looking back to (\ref{Jiso}) and applying duality, we are reduced to proving 
\begin{equation}
\label{dJiso}
  X^{*}(T)/ \iota^{*}X^{*}(\bar{T}) \cong \mathrm{Hom}\left( \mathrm{Hom}(X^{*}(\bar{T}),\mathbb{Z})/ \hat{\iota}^{*}\mathrm{Hom}(X^{*}(T),\mathbb{Z}),\,  \mathbb{C}^{\times} \right).
\end{equation}
This is a statement about a quotient of $X^{*}(T) \cong \mathbb{Z}^{n}$ by a full-rank sublattice.   By choosing $\mathbb{Z}$-bases for $X^{*}(T)$ and $X^{*}(\bar{T})$, we identify both groups with $\mathbb{Z}^{n}$ and $\iota^{*}$ as an endomorphism of $\mathbb{Z}^{n}$.   It is an elementary exercise to show that  $\mathbb{Z} \cong \mathrm{Hom}(\mathbb{Z},\mathbb{Z})$, that $\iota^{*}$ has the same invariant factors, $m_{1}, \ldots, m_{n}$, as the transpose of $\iota^{*}$, and that the invariant factors do not depend on the choice of bases.  Consequently, isomorphism (\ref{dJiso}) is equivalent to
$$\oplus_{j=1}^{n} \mathbb{Z}/ m_{j}\mathbb{Z} \cong \mathrm{Hom}(\oplus_{j=1}^{n} \mathbb{Z}/ m_{j}\mathbb{Z}, \, \mathbb{C}^{\times}),$$
which follows from the well-known isomorphism
$$ \mathbb{Z}/ m_{j}\mathbb{Z} \cong \mathrm{Hom}( \mathbb{Z}/ m_{j}\mathbb{Z}, \, \mathbb{C}^{\times}).$$
\end{proof}
This lemma allows us to denote ${^\vee}\bar{T}$  of
\cite{kal}*{Section 5.3} by ${^\vee}T^{\hat{J}}$. In this way
$$\xymatrix{1 \ar[r]   & \uppi_{1}({^\vee}T)^{\mathrm{alg}} \ar[r] \ar[d] & {^\vee}T^{\mathrm{alg}} \ar[r]  \ar[d] & {^\vee}T \ar[r] \ar[d]^{\cong} & 1\\
1 \ar[r] & \hat{J} \ar[r] & {^\vee}T^{\hat{J}}  \ar[r]  & {^\vee}T
\ar[r] & 1}$$
which conforms with the notation of \cite{abv}*{(5.13)}.

We shall see in a moment how the set of equivalence classes of strong real forms of $T$ of type $J$ is associated to $X_{*}(\bar{T})$.  The precise statement depends on the injection (\ref{iotamap}), which is the bridge between the two relevant groups.  It is possible to identify $X_{*}(T)$ with its image under (\ref{iotamap}) and this is done in \cite{kal}*{Section 4.1}.  Alternatively, one may extend $\hat{\iota}^{*}$ to a vector space isomorphism
$$\hat{\iota}^{*}_{\mathbb{Q}}:  X_{*}(T) \otimes \mathbb{Q} \rightarrow X_{*}(\bar{T}) \otimes \mathbb{Q},$$
and identify $X_{*}(\bar{T})$ with its preimage under
$\hat{\iota}^{*}_{\mathbb{Q}}$ in $X_{*}(T) \otimes \mathbb{Q}$.  We
then set 
\begin{equation}
  \label{preim}
\frac{X_{*}(\bar{T})^{-\sigma}}{(1-\sigma) X_{*}(T)} = 
(\hat{\iota}_{\mathbb{Q}}^{*})^{-1} \left(
\frac{X_{*}(\bar{T})^{-\sigma}}{(1-\sigma) X_{*}(\bar{T})} \right)
\subset  \frac{(X_{*}(T) \otimes \mathbb{Q})^{-\sigma}}{(1-\sigma) X_{*}(T)}.
\end{equation}
We use this identification below.
\begin{lem}
\label{typeJiso}
The map (\ref{9.10map1}) sends $X_{*}(\bar{T})^{-\sigma}/(1-\sigma)X_{*}(T)$ bijectively onto the set of equivalence classes of strong real forms of $T^{\Gamma}$ of type $J$. 
\end{lem}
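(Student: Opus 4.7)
The plan is to restrict the bijection (\ref{9.10map1}) to the subset $X_{*}(\bar{T})^{-\sigma}/(1-\sigma)X_{*}(T)$ defined in (\ref{preim}) and identify its image as the equivalence classes of strong real forms of type $J$. Since (\ref{9.10map1}) is already a bijection from $(X_{*}(T) \otimes \mathbb{Q})^{-\sigma}/(1-\sigma)X_{*}(T)$ onto all equivalence classes of strong real forms, the task reduces to matching, under this bijection, the type-$J$ condition $\delta^{2} \in J$ on strong real forms with the lattice condition $\lambda_{1} \in X_{*}(\bar{T})^{-\sigma}$.

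The first step is to compute $(\exp(\uppi i \lambda_{1})\delta_{q})^{2}$. Using the antiholomorphic $\sigma$-action on the Lie algebra and \cite{abv}*{Lemma 9.9 (d)}, the eigenvalue condition on $\lambda_{1}$ combines with the antilinearity to give $\sigma(\exp(\uppi i \lambda_{1})) = \exp(\uppi i \lambda_{1})$, so that
$$\bigl(\exp(\uppi i \lambda_{1})\delta_{q}\bigr)^{2} = \exp(\uppi i \lambda_{1}) \, \sigma(\exp(\uppi i \lambda_{1})) = \exp(2\uppi i \lambda_{1}).$$
This is the identity that underlies the description of the image of (\ref{9.10map}).

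The second step is to interpret $\exp(2\uppi i \lambda_{1}) \in J$ lattice-theoretically. Since $J = \ker\iota$ is cut out by the characters $\iota^{*}X^{*}(\bar{T}) \subset X^{*}(T)$, the condition $\exp(2\uppi i \lambda_{1}) \in J$ amounts to $\langle \iota^{*}\chi', \lambda_{1}\rangle \in \mathbb{Z}$ for every $\chi' \in X^{*}(\bar{T})$. By the transpose identity (\ref{iotamap}), this reads $\langle \chi', \hat{\iota}^{*}_{\mathbb{Q}}(\lambda_{1})\rangle \in \mathbb{Z}$ for every $\chi' \in X^{*}(\bar{T})$, and perfect $\mathbb{Z}$-duality then forces $\hat{\iota}^{*}_{\mathbb{Q}}(\lambda_{1}) \in X_{*}(\bar{T})$. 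Unwinding (\ref{preim}), this is exactly $\lambda_{1} \in X_{*}(\bar{T})^{-\sigma}$.

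Combining the two observations, the restriction of (\ref{9.10map1}) to $X_{*}(\bar{T})^{-\sigma}/(1-\sigma)X_{*}(T)$ is a well-defined map (since $(1-\sigma)X_{*}(T) \subset X_{*}(T) \subset X_{*}(\bar{T})$), is injective (from (\ref{9.10map1}) itself), and surjects onto the type-$J$ equivalence classes by the chain of equivalences above. The only genuine obstacle I anticipate is purely bookkeeping: keeping the three lattices $X_{*}(T) \subset X_{*}(\bar{T}) \subset X_{*}(T) \otimes \mathbb{Q}$ straight through (\ref{preim}) and carefully invoking the transpose relationship $\langle \iota^{*}\chi',\lambda_{1}\rangle = \langle \chi', \hat{\iota}^{*}_{\mathbb{Q}}(\lambda_{1})\rangle$ that bridges the $\iota$-side and the $\hat{\iota}$-side of the calculation.
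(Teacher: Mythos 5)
Your proof is correct, and it takes a genuinely different route from the paper's. The paper proves the lemma by a commutative-diagram reduction: it sets up a square relating $(X_{*}(T)\otimes\mathbb{Q})^{-\sigma}/(1-\sigma)X_{*}(T)$ to $(X_{*}(\bar{T})\otimes\mathbb{Q})^{-\sigma}/(1-\sigma)X_{*}(\bar{T})$ via $\hat\iota^{*}_{\mathbb{Q}}$ on one side and the quotient $T^{\Gamma}\to\bar{T}^{\Gamma}$ on the other, then observes that type-$J$ strong real forms of $T^{\Gamma}$ land exactly on the pure real forms of $\bar{T}^{\Gamma}$, so that restricting the top arrow of the square gives the claimed bijection. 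In effect the paper leverages the already-known pure-real-form correspondence (\ref{pureforms}) for the quotient torus $\bar{T}$. You instead never pass to $\bar{T}^{\Gamma}$: you compute $(\exp(\uppi i\lambda_{1})\delta_{q})^{2}=\exp(2\uppi i\lambda_{1})$ directly from the eigenvalue condition and antilinearity, and translate $\exp(2\uppi i\lambda_{1})\in J$ into the lattice condition $\hat\iota^{*}_{\mathbb{Q}}(\lambda_{1})\in X_{*}(\bar{T})$ via the transpose identity $\langle\iota^{*}\chi',\lambda_{1}\rangle=\langle\chi',\hat\iota^{*}_{\mathbb{Q}}\lambda_{1}\rangle$ and $\mathbb{Z}$-duality, which by (\ref{preim}) is precisely $\lambda_{1}\in X_{*}(\bar{T})^{-\sigma}$. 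Your computation is more self-contained and makes the role of the duality between $X^{*}$ and $X_{*}$ explicit; the paper's argument is shorter because it offloads the work to the commutativity of the $\iota$–$\hat\iota$ square and the pure case. Both are valid, and your chain of equivalences also implicitly establishes (and is equivalent to) the paper's claim that the quotient-by-$J$ map sends type-$J$ forms onto pure forms.
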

\begin{proof}
Suppose $\bar{t}\delta_{q}$ is a strong real form of
$\bar{T}^{\Gamma}$ where $\bar{t} = tJ$ for some $t \in T$.  Then
$\bar{t} \delta_{q}(\bar{t})$ is of finite order.  This in turn
implies $(t \delta_{q} (t))^{m} \in J$ for some $m \geq 1$, $t
\delta_{q}(t)$ has finite order, and $t \delta_{q}$ is a strong real
form of $T^{\Gamma}$.  Combining these
observations with (\ref{9.10map1}), we  obtain the commutative diagram
$$\xymatrix{ \frac{(X_{*}(T) \otimes \mathbb{Q})^{-\sigma}}{(1-\sigma) X_{*}(T)}  \ar@{<->}[r]
\ar@{->>}[d]_{\hat{\iota}^{*}_{\mathbb{Q}}} & \mbox{equivalence classes strong real forms } 
T^{\Gamma} \ar@{->>}[d]^{/J}\\ 
\frac{(X_{*}(\bar{T}) \otimes \mathbb{Q})^{-\sigma}}{(1-\sigma)
  X_{*}(\bar{T})} \ar@{<->}[r] & \mbox{equivalence classes strong real
  forms } \bar{T}^{\Gamma} }$$
  Arguing as above one sees  that the strong real
forms of $T^{\Gamma}$ of type $J$ map onto the pure real forms of
$\bar{T}^{\Gamma}$ upon taking the quotient by $J$. Therefore the previous
diagram and  (\ref{preim}) imply
$$\xymatrix{
  \frac{X_{*}(\bar{T})^{-\sigma}}{(1-\sigma) X_{*}(T)}  \ar@{<->}[r]
\ar@{->>}[d]_{\hat{\iota}^{*}_{\mathbb{Q}}} & \mbox{equivalence classes strong real forms type } 
J \ar@{->>}[d]^{/J}\\ 
\frac{X_{*}(\bar{T})^{-\sigma}}{(1-\sigma) X_{*}(\bar{T})} \ar@{<->}[r] & \mbox{equivalence classes pure real forms } \bar{T}^{\Gamma} }$$
The top horizontal arrow of the latter diagram is the assertion of the lemma.
\end{proof}

We now have a refinement of (\ref{strongforms})
\begin{align}
\label{Jforms}
\nonumber & \mbox{Equivalence classes of strong real forms of type } J\\
 &\cong  X_{*}(\bar{T})^{-\sigma}/ (1-\sigma)X_{*}(T)\\
\nonumber &\cong \mathrm{Hom} \left( ({^\vee}T^{{^\vee}\theta})^{\hat{J}}/ 
(({^\vee}T^{{^\vee}\theta})^{\hat{J}} )^{0}, \mathbb{C}^{\times} \right)
 \end{align}
in which the bottom isomorphism is given by \cite{abv}*{Theorem 10.11}
after writing ${^\vee}\bar{T}$ as ${^\vee}T^{\hat{J}}$.  This is equivalent to a
 perfect pairing between $({^\vee}T^{{^\vee}\theta})^{\hat{J}}/(({^\vee}T^{{^\vee}\theta})^{\hat{J}} )^{0}$ and the equivalence classes of real forms of type $J$.

An application of \cite{kal}*{Corollary 5.4} to $T$ produces a perfect pairing resembling the one we 
have just established.  There are two apparent differences between the pairings.  The first 
difference is the use of the torsion subgroup of $X_{*}(\bar{T})/ (1-\sigma)X_{*}(T)$ in place of $X_{*}
(\bar{T})^{-\sigma}/ (1-\sigma)X_{*}(T)$ above.  \cite{kal}*{Fact 4.1} states that the two groups are 
actually equal, and  this may be proven by decomposing a torsion element into $+1$ and $-1$ 
eigenvectors with respect to $\sigma$ in $X_{*}(T) \otimes \mathbb{C}$.  

The other apparent difference in the pairings is the isomorphism between 
$X_{*}(\bar{T})^{-\sigma}/ (1-\sigma)X_{*}(T)$ and the equivalence classes of strong real forms of
type $J$.  The isomorphism is given in \cite{kal}*{Theorem 4.8} and is rather intricate over p-adic fields.  
Fortunately, over the real numbers the setup simplifies considerably (\cite{kal}*{Theorem 5.2}), and 
one may prove that the isomorphism is equal to (\ref{9.10map}).  Without delving too far into 
the details, the proof runs as follows.  In \cite{kal}*{Section 4.6}
the element $\lambda_{1} \in X_{*}(\bar{T})^{-\sigma}$ (\emph{cf.}
(\ref{9.10map1})) is assigned to the the strong real form $t\delta_{q}$ in which
\begin{equation}
\label{cupprod}
t = \left( l_{k}c_{k} \sqcup_{\mathbb{C}/\mathbb{R}} \,k!  \lambda_{1} \right) (\sigma).
\end{equation}
In this equation, $\sigma \in \Gamma$ is non-trivial, $k>0$ is any integer divisible by $|J|$, $l_{k}$ is 
the (usual) $k!$-th root function on $\mathbb{C}^{\times}$, $c_{k} \in Z^{2}(\Gamma, \mathbb{C}
^{\times})$ is the 2-cocycle defining $W_{\mathbb{R}}$, and 
$\sqcup_{\mathbb{C}/\mathbb{R}}$ is a cup product defined in \cite{kal}*{Section 4.3}.  The formula 
for the computation of (\ref{cupprod}) appears at the end of \cite{kal}*{Section 4.3} and one may 
compute that it is 
$$k! \lambda_{1}(\exp(\uppi i/k!)) = \lambda_{1}( \exp(\uppi i)).$$
Since we are regarding $X_{*}(\bar{T})$ as a submodule of $X_{*}(T) \otimes \mathbb{Q}$, this 
element is regarded as an element in $T$.  It is an elementary exercise to prove that 
$\lambda_{1}( \exp(\uppi i))$ coincides with $\exp( \uppi i \lambda_{1})$ under the identification we have made 
for the Lie algebra of $T$.  In consequence, the map $\lambda_{1} \mapsto  \lambda_{1}( \exp(\uppi i))$ 
induces the same map as (\ref{9.10map}), and the first isomorphism of (\ref{Jforms}) coincides with 
Kaletha's.  
The second isomorphism of (\ref{Jforms}) also coincides with Kaletha's (\cite{kal}*{Proposition 5.3}), 
so that  the perfect  pairings between $({^\vee}T^{{^\vee}\theta})^{\hat{J}}/(({^\vee}T^{{^\vee}
\theta})^{\hat{J}} )^{0}$ and the equivalence classes of real forms of type $J$ in (\ref{Jforms}) and in 
\cite{kal}*{Corollary 5.4} are the same.

\section{L-packets in Adams-Barbasch-Vogan}
\label{abvlp}

In this section we review the definition of L-packets as given in \cite{abv}.  In this framework  the representations in an L-packet are representations of strong real forms, and usually run over different real forms of a given group.   One may calibrate the size of an L-packet by specifying a subgroup $J \subset Z(G)^{\mathrm{tor}}$ and considering only strong real forms of type $J$.  We first complete the description of L-packets over \emph{all} strong real forms, \emph{i.e.} where $J = Z(G)^{\mathrm{tor}}$.  Towards the end of the section we shall discuss how the construction is affected in taking $J$ to be a smaller subgroup.  

An L-packet is determined by a (${^\vee}G$-conjugacy class of an) L-homomorphism as in (\ref{Lhom}).
The (equivalence classes of) irreducible representations which appear
in the L-packet $\Pi^{\mathrm{ABV}}_{\phi}$ are parameterized by
characters of an abelian group.  For this we need the algebraic universal covering ${^\vee}G^{\mathrm{alg}}$ of ${^\vee}G$, 
\begin{equation}
  \label{covseq1}
  1 \rightarrow \uppi_{1}({^\vee}G)^{\mathrm{alg}} \rightarrow {^\vee}G^{\mathrm{alg}} \rightarrow {^\vee}G \rightarrow 1
  \end{equation}
in which $\uppi_{1}({^\vee}{G})^{\mathrm{alg}}$ is a profinite group.  Let ${^\vee}G_{\phi}$ be the isotropy subgroup of ${^\vee}G$ acting on $\phi$ under conjugation, \emph{i.e.} the centralizer of $\phi(W_{\mathbb{R}})$ in ${^\vee}G$.   Let ${^\vee}G_{\phi}^{\mathrm{alg}}$ be the preimage of ${^\vee}G_{\phi}$ under (\ref{covseq1}), and let
$${^\vee}G_{\phi}^{\mathrm{alg}}/ ({^\vee}G_{\phi}^{\mathrm{alg}})^{0}$$
be the component group of ${^\vee}G_{\phi}^{\mathrm{alg}}$.  The
component group  is abelian (\cite{abv}*{\emph{p.} 61}). Let
$({^\vee}G_{\phi}^{\mathrm{alg}}/
({^\vee}G_{\phi}^{\mathrm{alg}})^{0})^{\wedge}$ be its group of characters.  The elements of $\Pi^{\mathrm{ABV}}_{\phi}$ are parameterized by the elements $\tau \in ({^\vee}G_{\phi}^{\mathrm{alg}}/ ({^\vee}G_{\phi}^{\mathrm{alg}})^{0})^{\wedge}$ and most of this section is devoted to the description of this parameterization.

For convenience, we fix  Borel subgroups, $B \subset G$ and
$\mathcal{B} \subset {^\vee}G$, and maximal tori $T \subset B$ and
$\mathcal{T} \subset \mathcal{B}$.  We may and do choose these
subgroups  to be stable under the $\Gamma$-actions of $G{^\Gamma}$ and
${^\vee}G^{\Gamma}$ respectively.  Write $W_{\mathbb{R}} =
  \mathbb{C}^{\times} \cup j\mathbb{C}^{\times}$.  
We may assume that $\phi: W_{\mathbb{R}} \rightarrow {^\vee}G^{\Gamma}$ is an L-homomorphism such that
\begin{equation}
\label{lhom}
\phi(z) = z^{\lambda} \bar{z}^{\mathrm{Ad}(\phi(j))\lambda} \in \mathcal{T}, \quad z \in \mathbb{C}^{\times}
\end{equation}
for some $\lambda \in X_{*}(\mathcal{T}) \otimes \mathbb{C}$.  We identify the Lie algebra of $\mathcal{T}$ with $(X_{*}(\mathcal{T}) \otimes \mathbb{C})/ X_{*}(\mathcal{T})$ via the map $\exp(2\uppi i\,  \cdot)$.  Set $y = \exp(\uppi i \lambda)  \phi(j)$.

The pair $(\lambda,y)$ attached to $\phi$ allows us to choose another maximal torus ${^d}T \subset {^\vee}G$ in the following manner.  First, define the Levi subgroup
\begin{equation}
\label{dL}
{^d}L =  {^d}L(\lambda, y\cdot \lambda) = \{ g \in {^\vee}G: \mathrm{Int}(g)\circ\lambda = \lambda, \ \mathrm{Int}(g)\mathrm{Int}(y)\circ \lambda = \mathrm{Int}(y) \circ \lambda\} \subset {^\vee}G.
\end{equation}
Evidently, $\mathcal{T}$ is a maximal subtorus of ${^d}L$.  Since $y^{2} = \exp(2 \uppi i \lambda)$ (\cite{abv}*{Proposition 5.6}), it follows  that $y^{2}$ is central in ${^d}L$.  Therefore, in setting  ${^\vee}\theta = \mathrm{Int}(y)$ and restricting it to ${^{d}L}$, we obtain an involutive automorphism of ${^d}L$.  By \cite{abv}*{Lemma 12.10} there exists a maximal torus ${^d}T \subset {^d}L$ with the property that
$({^d}\mathfrak{t})^{-{^\vee}\theta}$ is a maximal semisimple abelian
subalgebra of $({^d}\mathfrak{l})^{-{^\vee}\theta}$.  The torus
${^d}T$ is unique up to conjugation by the identity component of the
subgroup ${^d}K  = ({^d}L)^{{^\vee}\theta}$.  One may interpret such a
maximal torus as a maximally split torus of ${^d}L$.

As ${^d}L$ is a Levi subgroup of ${^\vee}G$, the torus ${^d}T$ is maximal in ${^\vee}G$ as well.   Let ${^d}T^{\Gamma}$  be the group generated by $y$ and ${^d}T$.  It is not difficult to see that $\phi$ takes values in ${^d}T^{\Gamma}$ (\cite{abv}*{(12.11)}).  Define
$$\phi_{d}: W_{\mathbb{R}} \rightarrow {^d}T^{\Gamma}$$
to be the resulting L-homomorphism for ${^d}T^{\Gamma}$.

This describes the passage from $\phi$, a homomorphism associated to ${^\vee}G$, to $\phi_{d}$, a 
homomorphism associated to ${^d}T$.  We wish to accomplish the same thing for $\tau \in 
({^\vee}G_{\phi}^{\mathrm{alg}}/ ({^\vee}G_{\phi}^{\mathrm{alg}})^{0})^{\wedge}$.  The analogue of the component group for ${^d}T$ is defined by 
setting $({^d}T^{{^\vee}\theta})^{\mathrm{alg}, {^\vee}G}$ to be the preimage of ${^d}T^{{^\vee}
  \theta}$ in (\ref{covseq1}) and then considering its component group
\begin{equation}
\label{Tcomp}
  ({^d}T^{{^\vee}\theta})^{\mathrm{alg}, {^\vee}G}/
(({^d}T^{{^\vee}\theta})^{\mathrm{alg}, {^\vee}G})^{0}.
\end{equation}
The inclusion of $({^d}T^{{^\vee}\theta})^{\mathrm{alg}, {^\vee}G}$ into ${^\vee}G_{\phi}^{\mathrm{alg}}$ induces a surjection
$$({^d}T^{{^\vee}\theta})^{\mathrm{alg}, {^\vee}G}/ (({^d}T^{{^\vee}\theta})^{\mathrm{alg}, {^\vee}G})^{0} \rightarrow {^\vee}G_{\phi}^{\mathrm{alg}}/ ({^\vee}G_{\phi}^{\mathrm{alg}})^{0}$$
(\cite{abv}*{(12.11)(e)}).  The  kernel of this surjection is generated by the cosets of the elements
\begin{equation}
\label{alphaone}
{^\vee}\alpha(-1) \in ({^d}T^{{^\vee}\theta})^{\mathrm{alg}, {^\vee}G}
\end{equation}
(\cite{abv}*{(12.6)(c), (12.11)(f)}), where $\alpha \in R({^d}L,{^d}T)$ satisfies $\alpha \circ {^\vee}\theta = -\alpha$ (\emph{i.e.} $\alpha$ is ${^\vee}\theta$-\emph{real}).  Note that the definition of ${^d}L$ forces these roots to satisfy $\langle \lambda, \alpha \rangle = 0$. In consequence, we call these the \emph{real} $\phi$-\emph{singular} roots of $R({^\vee}G,{^d}T)$ and denote the set of them by $R_{\mathbb{R}}({^d}L,{^d}T)$.  At this point, we abandon the ${^\vee}G$ in superscript and summarize by writing
\begin{equation}
  \label{compiso}
({^d}T^{{^\vee}\theta})^{\mathrm{alg}}/
  (({^d}T^{{^\vee}\theta})^{\mathrm{alg}})^{0}\langle
        {^\vee}\alpha(-1): \alpha \in R_{\mathbb{R}}({^d}L,{^d}T)
        \rangle \cong {^\vee}G_{\phi}^{\mathrm{alg}}/
        ({^\vee}G_{\phi}^{\mathrm{alg}})^{0}.
\end{equation}        
This isomorphism induces an isomorphism of the character groups.  We
define $\tau_{d}$ to be the image of $\tau \in
({^\vee}G_{\phi}^{\mathrm{alg}}/
({^\vee}G_{\phi}^{\mathrm{alg}})^{0})^{\wedge}$ under the isomorphism
of character groups.  We shall often identify $\tau_{d}$ with a character of $({^d}T^{{^\vee}\theta})^{\mathrm{alg}}/ (({^d}T^{{^\vee}\theta})^{\mathrm{alg}})^{0}$ which is trivial on the cosets of ${^\vee}\alpha(-1)$,  $\alpha \in R_{\mathbb{R}}({^d}L,{^d}T)$.
So far we have described an assignment
\begin{equation}
  \label{1map}
  (\phi, \tau) \mapsto (\phi_{d},\tau_{d}).
\end{equation}
The pair on the right is a \emph{complete Langlands parameter of the
  Cartan subgroup} ${^d}T^{\Gamma}$ with respect to
${^\vee}G^{\Gamma}$ (\cite{abv}*{Definition 12.4}).

Using \cite{abv}*{Proposition  13.10 (b)} we obtain a Cartan subgroup
$T^{\Gamma} \subset G^{\Gamma}$ (\cite{abv}*{Definition 12.1}) related
to ${^d}T$.  First off, $T^{\Gamma}$ is a weak extended group of $T$,
which is itself a maximal torus of $G$.  The proposition also provides
an element $\delta_{d} \in T^{\Gamma}$ such that $\delta_{d} = g
\delta_{q} g^{-1}$ for some $g \in G$ (\ref{extG}). This implies that
$(\delta_{d}, g^{-1}N g, \chi \circ \mathrm{Int}(g)) \in \mathcal{W}$
and that $G(\mathbb{R}, \delta_{d})$ is a quasisplit real form of $G$.
In addition, the proposition provides an isomorphism ${^\vee}T \cong
{^d}T$ which identifies ${^d}T^{\Gamma}$ as an \emph{E-group} of $T$
(\cite{abv}*{Definition 4.6}).
The classical Langlands correspondence matches $\phi_{d}$ with an irreducible representation $\pi_{d}$ of $T(\mathbb{R}) = T(\mathbb{R},\delta_{d})$.  

According to (\ref{strongforms}), there is a map from
$ ({^d}T^{{^\vee}\theta})^{\mathrm{alg}}/
(({^d}T^{{^\vee}\theta})^{\mathrm{alg}})^{0}$ to equivalence classes of strong
real forms of $T^{\Gamma}$.  According to \cite{abv}*{Proposition
  13.12 (a)},  the image of (\ref{Tcomp}) under this map actually lies in the equivalence classes of strong real forms of $G^{\Gamma}$.   The map is denoted by
$$\tau_{d} \mapsto \delta(\tau_{d})$$
in \cite{abv}*{Proposition 13.12}, and is chosen to send the trivial character  to the equivalence class of $\delta_{d}$, \emph{i.e.} $\delta_{d} = \delta(1)$. We may progress from (\ref{1map}) by writing
\begin{equation}
  \label{2map}
  (\phi, \tau) \mapsto (\phi_{d}, \tau_{d}) \mapsto (\pi_{d}, \delta(\tau_{d})).
\end{equation}
For the  final step in this sequence of maps, observe that $T(\mathbb{R}, \delta(\tau_{d})) = T(\mathbb{R})$.  Hence,  $\pi_{d}$ is a representation of 
\begin{equation}
\label{inc}
T(\mathbb{R}, \delta(\tau_{d})) \subset G(\mathbb{R}, \delta(\tau_{d})).
\end{equation}
This representation may be induced, cohomologically and parabolically, to a \emph{standard} 
representation $M(\pi_{d}, \delta(\tau_{d}))$ of $G(\mathbb{R}, \delta(\tau_{d}))$ (\cite{abv}*{Section 
11, Theorem 12.3, Proposition 13.2}).  The standard representation has a unique Langlands 
quotient, which is an irreducible representation $\pi_{\tau_{d}}$ of $G(\mathbb{R}, \delta(\tau_{d}))$.  The final 
step to be added to (\ref{2map}) is
\begin{equation}
\label{abvcorr}
(\phi, \tau) \mapsto (\phi_{d}, \tau_{d}) \mapsto (\pi_{d}, \delta(\tau_{d})) \mapsto (\pi_{\tau_{d}}, 
\delta(\tau_{d})).
\end{equation}
The L-packet of $\phi$ is defined to consist of the distinct equivalence classes of the representations 
of strong real forms $(\pi_{\tau_{d}}, \delta(\tau_{d}))$, $\tau \in ({^\vee}G_{\phi}^{\mathrm{alg}}/ ({^\vee}G_{\phi}^{\mathrm{alg}})^{0})^{\wedge}$.  Ignoring 
any indication of equivalences in our notation, we simply  write
$$\Pi^{\mathrm{ABV}}_{\phi} = \left\{ (\pi_{\tau_{d}}, \delta(\tau_{d})): \tau  \in ({^\vee}G_{\phi}^{\mathrm{alg}}/ ({^\vee}G_{\phi}^{\mathrm{alg}})^{0})^{\wedge} \right\}.$$

We now consider a torsion subgroup $J \subset Z(G)^{\mathrm{tor}}$ and the parameterization of the smaller L-packet defined by
$$\Pi^{\mathrm{ABV}}_{\phi,J} = \left\{ (\pi_{\tau_{d}}, \delta(\tau_{d})): \tau  \in ({^\vee}G_{\phi}^{\mathrm{alg}}/ ({^\vee}G_{\phi}^{\mathrm{alg}})^{0})^{\wedge} ,\ \delta(\tau_{d})^{2} \in J \right\}.$$
There is a natural isomorphism between $Z(G)^{\mathrm{tor}}$ and the group $(\uppi_{1}(^{\vee}G)^{\mathrm{alg}})^{\wedge}$ of continuous characters of $\uppi_{1}(^{\vee}G)^{\mathrm{alg}}$, the 
\emph{Pontryagin dual} of $\uppi_{1}(^{\vee}G)^{\mathrm{alg}}$ (\cite{abv}*{Lemma 10.9 (a)}).  We 
may therefore identify $J$ with a subgroup of $(\uppi_{1}(^{\vee}G)^{\mathrm{alg}})^{\wedge}$ and write
$$J \hookrightarrow (\uppi_{1}(^{\vee}G)^{\mathrm{alg}})^{\wedge}$$
Applying Pontryagin duality  yields a surjection
$$\uppi_{1}(^{\vee}G)^{\mathrm{alg}} \rightarrow \hat{J}$$
(\cite{Folland}*{Theorem 4.31, Corollary 4.41}).  We form a profinite covering
\begin{equation}
  \label{Jhatcover}
  1 \rightarrow \hat{J} \rightarrow {^\vee}G^{\hat{J}} \rightarrow {^\vee}G \rightarrow 1
\end{equation}
(\cite{abv}*{(10.10)}), and repeat the earlier constructions replacing ``alg'' in superscript everywhere 
with $\hat{J}$.  In particular, the component group becomes
$${^\vee}G_{\phi}^{\hat{J}}/ ({^\vee}G_{\phi}^{\hat{J}})^{0}.$$
According to \cite{abv}*{Theorem 10.11}
\begin{equation}
\label{JLpacket}
\Pi^{\mathrm{ABV}}_{\phi,J} = \left\{ (\pi_{\tau_{d}}, \delta(\tau_{d})): \tau  \in ({^\vee}G_{\phi}^{\hat{J}}/ ({^\vee}G_{\phi}^{\hat{J}})^{0})^{\wedge} 
\right\}.
\end{equation}

In anticipation of the comparison with the  L-packets of the next section, let us rephrase inclusion 
(\ref{inc}) in the language of embeddings when the strong real forms
$\delta(\tau_{d})$ are of type $J$,  where $J \subset Z(G)$ is a finite  subgroup defined over $\mathbb{R}$.  As in Lemma \ref{idiso}, the 
group $G(\mathbb{R}, \delta(\tau_{d}))$ is the group of real points of a rigid inner twist $(\mathrm{id},z_{\tau_{d}})$ for 
which $z_{\tau_{d}}(\sigma) = \delta(\tau_{d}) \delta_{q}$.  Inclusion (\ref{inc}) may be written as an $\mathbb{R}$-embedding
$$\eta_{\tau_{d}}: T \rightarrow G$$
in which the $\mathbb{R}$-structure of $G$ is defined by $\delta(\tau_{d})$ (or equivalently $z_{\tau_{d}}$).     It 
makes sense to call $\eta_{\tau_{d}}$ an \emph{embedding of} $T$ \emph{into a strong real form of (or rigid 
inner twist of)} $G$ \emph{of type} $J$ (\emph{cf.} \cite{kal}*{\emph{p.} 594}).

Keeping Lemma \ref{idiso} in mind, the discussion on \cite{kal}*{\emph{pp.} 593-594} reveals that the 
map
\begin{equation}
\label{kalbij}
\delta(\tau_{d}) \rightarrow \eta_{\tau_{d}}
\end{equation}
passes to a bijection from the set of equivalence classes of strong real forms of $T^{\Gamma}$ of 
type $J$ to set of equivalence classes of embeddings of $T$ into strong real forms of $G$ of type 
$J$.  Here, the two embeddings $\eta_{\tau_{1}}$, $\eta_{\tau_{2}}$ are equivalent if there exists $g \in 
G$ such that $\mathrm{Int}(g)$ is defined over $\mathbb{R}$ and
$z_{\tau_{2}}(\sigma) = g \,z_{\tau_{1}}(\sigma) \, \delta_{q} (g^{-1})$ (\emph{cf.} \cite{kal}*{\emph{p.} 591}).  

We conclude by making some cosmetic changes to the correspondences of (\ref{abvcorr}). In our current context the correspondences may be written as
\begin{equation}
\label{abvcorr1}
(\phi, \tau) \mapsto (\phi_{d}, \tau_{d}) \mapsto (\pi_{d}, \eta_{\tau_{d}}) \mapsto (\pi(\eta_{\tau_{d}}), \delta(\tau_{d})). 
\end{equation}
In this notation the L-packet (\ref{JLpacket}) becomes
\begin{equation}
\label{JLpacket1}
\Pi^{\mathrm{ABV}}_{\phi,J} = \left\{ \left( \pi(\eta_{\tau_{d}}), \delta(\tau_{d}) \right) : \tau  \in ({^\vee}G_{\phi}^{\hat{J}}/ ({^\vee}G_{\phi}^{\hat{J}})^{0})^{\wedge} 
\right\}.
\end{equation}
Alternatively, we may highlight the torus ${^d}T$ and write
\begin{equation}
\label{JLpacket2}
\Pi^{\mathrm{ABV}}_{\phi,J} = \left\{ \left( \pi(\eta_{\tau_{d}}), \delta(\tau_{d}) \right) : \tau_{d}  \in
\left( ({^d}T^{{^\vee}\theta})^{\hat{J}} /({^d}T^{{^\vee}\theta})^{\hat{J}})^{0} \langle {^\vee}\alpha(-1) : 
\alpha \in R_{\mathbb{R}}({^d}L,{^d}T) \rangle \right)^{\wedge}  
\right\}.
\end{equation}
Here, $({^d}T^{^{\vee}\theta})^{\hat{J}}$ is the preimage of
${^d}T^{^{\vee}\theta}$ under (\ref{Jhatcover}).  The identification
of the character groups follows from (\ref{compiso}) and
\cite{ABV}*{Definition 10.10}.

\section{Tempered L-packets of Kaletha, Langlands and Shelstad}
\label{klslp}

The scope of this section is narrower than the previous section.  We begin with a \emph{tempered} 
L-homomorphism $\phi : W_{\mathbb{R}} \rightarrow {^\vee}G^{\Gamma}$ rather than an arbitrary one.  Moreover, we fix $J 
\subset Z(G)^{\mathrm{tor}}$ to be a \emph{finite} subgroup defined over $\mathbb{R}$. Otherwise, 
we continue with the same setup as Section \ref{abvlp}.  Our goal is to review the parameterization 
of the tempered L-packet $\Pi_{\phi,J}^{\mathrm{KLS}}$  attached to $\phi$ and $J$ as given in 
\cite{kal}*{Section 5.6}.  The presentation in \cite{kal}*{Section 5.6} itself contains a review of the work of Shelstad in \cite{She82}.  We shall only present the results of Shelstad and Kaletha necessary for the comparison with Section \ref{abvlp} later on.

We begin with an L-homomorphism as in (\ref{lhom}) whose image in bounded.  Let $y = \exp(\uppi i \lambda) \phi(j)$ as before, and define ${^\vee}\theta = \mathrm{Int}(y)$.  Attached to $\phi$ is a root subsystem
$${^\vee}\Delta_{\phi} \subset R({^\vee}G, \mathcal{T}).$$
When ${^\vee}\Delta_{\phi}$ is non-empty, it is of Dynkin type $A_{1} \times \cdots \times A_{1}$.  The automorphism ${^\vee}\theta$ fixes every 
root in $^{\vee}\Delta_{\phi}$ and negates any root vector $X_{\alpha}$ in the root space of $
\alpha \in {^\vee}\Delta_{\phi}$.  Thinking of $^{\vee}\theta$ as a Cartan involution of ${^\vee}
G$, we say that every root in ${^\vee}\Delta_{\phi}$ is imaginary and noncompact (\cite{abv}*{(12.5), 
(12.7)}).  In addition, all roots $\alpha \in {^\vee}\Delta_{\phi}$ satisfy $\langle \lambda, {^\vee}\alpha \rangle  =0$, that is to say all roots are $\phi$\emph{-singular} (\cite{abv}*{Definition 12.4}).

Shelstad determines a new maximal torus of ${^\vee}G$, essentially by
taking Cayley transforms of $\mathcal{T}$ with respect to the roots of
${^\vee}\Delta_{\phi}$ (\cite{beyond}*{Section VI.7}).  More
precisely, for specific choices of root vectors $X_{\pm \alpha}$, Shelstad defines elements
\begin{equation}
\label{cayley}
c_{\alpha} = \exp\left(\frac{\uppi i}{4} (X_{\alpha} + X_{-\alpha}) \right), \quad \alpha \in {^\vee}\Delta_{\phi}.
\end{equation}
For each $\alpha \in {^\vee}\Delta_{\phi}$ the element $w_{\alpha} = (c_{\alpha})^{2}$
 is a representative of the simple reflection of $\alpha$ in the Weyl group.  By the properties of ${^\vee}\Delta_{\phi}$ listed above, these elements are seen 
to commute with one another, $\mathrm{Ad}(c_{\alpha})$ is seen to act trivially on $\lambda$, 
and ${^\vee}\theta(c_{\alpha}) = c_{\alpha}^{-1}$.  Choose a positive system ${^\vee}
\Delta_{\phi}^{+}$ and set 
\begin{equation}
\label{Tone}
c = \prod_{\alpha \in {^\vee}\Delta_{\phi}^{+}}c_{\alpha}, \quad \mathcal{T}_{1} = c 
\mathcal{T} c^{-1}.
\end{equation}
We call the various actions induced by conjugation with $c$ the \emph{Cayley transform with 
respect to} ${^\vee}\Delta_{\phi}$.
The roots in $c \, {^\vee}\Delta_{\phi} \subset R({^\vee}G,\mathcal{T}_{1})$  are of the form $c \, 
\alpha = c_{\alpha} \alpha$  for $\alpha \in {^\vee}\Delta_{\phi}$, and we 
compute
$${^\vee}\theta (c \,\alpha) = c_{\alpha}^{-1} ({^\vee}\theta \, \alpha)=  
c_{\alpha}  (c_{\alpha})^{-2} \,  \alpha = - c \, \alpha.$$
In other words, the Cayley transform converts the imaginary roots of ${^\vee}\Delta_{\phi}$ into 
real roots $c\, {^\vee}\Delta_{\phi}$ in $R({^\vee}G,\mathcal{T}_{1})$ (\cite{abv}*{(12.5)}).

The $\phi$-singularity of the roots in ${^\vee}\Delta_{\phi}$ ensures that $c \cdot \lambda = \lambda$ 
and so $\phi(\mathbb{C}^{\times}) \subset \mathcal{T}_{1}$.  In addition,
$$\phi(j) \cdot \mathcal{T}_{1} = {^\vee}\theta (c \cdot \mathcal{T}) = c^{-1} \cdot {^\vee}\theta (\mathcal{T}) = c^{-1} \cdot \mathcal{T} = c \cdot c^{-2} \cdot \mathcal{T} = \mathcal{T}_{1}.$$
This proves that $\phi$ takes values in the group $\mathcal{T}_{1}^{\Gamma}$ generated by $\mathcal{T}_{1}$ and $\phi(j)$ (or $y$).  We define
\begin{equation}
  \label{phi1}
  \phi_{1}: W_{\mathbb{R}} \rightarrow \mathcal{T}_{1}^{\Gamma}
\end{equation}
to be the L-homomorphism given by changing the codomain of $\phi$ to
$\mathcal{T}_{1}^{\Gamma}$.  We could define $\phi_{1}$ equally well
by conjugating $\phi$ by $c$ as in \cite{kal}*{Section 5.6}, and
leaving $\mathcal{T}$ unchanged.  However, for the purposes of
comparison later on, we prefer to record the changes effected by the
Cayley transform in the maximal tori instead. 

The representations in the the L-packet of $G$ are obtained from
representations in the L-packet of an intermediate Levi subgroup $M_{1}$.
The dual group ${^\vee}M_{1}$ of $M_{1}$ is defined by the root system
\begin{align}
\label{maxsplit}
R({^\vee}M_{1}, \mathcal{T}_{1}) &= \{ \alpha \in R({^\vee}G, \mathcal{T}_{1}):  {^\vee}\theta 
(\alpha) = - \alpha\} \\
\nonumber &= \{c \,\alpha: {^\vee} \alpha \in R({^\vee}G, \mathcal{T}), \ \phi(j) \cdot \alpha = - c^{2} \, \alpha \}.
\end{align}
The group ${^\vee}M_{1}$ is a Levi subgroup of ${^\vee}G$ which
corresponds to a Levi subgroup $M_{1}$ of 
$G$ using the Borel pairs $T\subset B$ and $\mathcal{T}_{1} \subset c\mathcal{B} c^{-1}$ 
(\cite{borel}*{Section 3}).  Furthermore, $M_{1}$ has a real structure defined by $\delta_{q}$. 
Since $M_{1}$ is preserved by $\delta_{q}$, we may define 
$$M_{1}^{\Gamma}  = M_{1} \rtimes \langle \delta_{q} \rangle \subset G^{\Gamma}.$$

We continue in making our way to a description  of the intermediate L-packet $\Pi_{\phi_{1},J}^{M_{1}}$.  As $\lambda$ is not necessarily $M_{1}$-regular, we make a detour by shifting to a regular infinitesimal character, describing the shifted L-packet, and then recovering $\Pi_{\phi_{1},J}^{M_{1}}$ from the shifted L-packet.
Let $\nu \in X_{*}(\mathcal{T}_{1})$ be a strictly dominant element with respect to $c \mathcal{B} 
c^{-1} \cap {^\vee}M_{1}$ and let $\phi_{\nu}: W_{\mathbb{R}} \rightarrow \mathcal{T}_{1}^{\Gamma}$ be the L-homomorphism obtained from $\phi_{1}$ by replacing $\lambda$ with $\lambda+ \nu$. 
(\emph{cf.}  (\ref{lhom})). Then $\phi_{\nu}$ is  a discrete L-parameter for $M_{1}$.  As in 
the previous section, \cite{abv}*{Proposition 13.10} pairs $\mathcal{T}_{1}^{\Gamma}$ with a Cartan 
subgroup $T_{1}^{\Gamma} \subset M_{1}^{\Gamma}$.  Again, as in the previous section, this 
inclusion into $M_{1}^{\Gamma}$ yields an $\mathbb{R}$-embedding
\begin{equation}
  \label{Wemb}
  \eta_{\mathcal{W}}: T_{1}  \rightarrow M_{1}
\end{equation}
where the $\mathbb{R}$-structure of $T_{1}$ and $M_{1}$ are given by $\delta_{q}$.
Moreover, this embedding together with the L-homomorphism $\phi_{\nu}$ 
determines (the equivalence class of) a representation $\pi^{M_{1}}_{\nu}(\eta_{\mathcal{W}})$ which 
is generic with respect to the Whittaker datum $\mathcal{W}_{M_{1}}$ inherited from $\mathcal{W}$ 
(\cite{abv}*{Lemma 14.11}).   The representation
$\pi^{M_{1}}_{\nu}(\eta_{\mathcal{W}})$ is in the essential discrete
series of $M_{1}(\mathbb{R}, \delta_{q})$, and  the pair
$(\pi^{M_{1}}_{\nu}(\eta_{\mathcal{W}}), \delta_{q})$ is a
representation of a strong real form of type $J$ (in fact it is pure).  

Recall from (\ref{kalbij}) that there is a bijection between the set of equivalence classes of embeddings of $T_{1}$ into strong real forms of $M_{1}$ of type $J$ and  the set of equivalence classes of  strong real forms of $T_{1}^{\Gamma}$ of type $J$.  Combining this bijection with bijection (\ref{Jforms}), we recover the L-packet
\begin{equation}
\label{M1packet}
\Pi^{M_{1}}_{\phi_{\nu},J} = \left\{ \left(
\pi^{M_{1}}_{\nu}(\eta_{\tau_{1}}), \delta(\tau_{1}) \right) : \tau
\in (({^\vee}M_{1})_{\phi_{\nu}}^{\hat{J}}/
(({^\vee}M_{1})_{\phi_{\nu}}^{\hat{J}})^{0} )^{\wedge} 
\right\}.
\end{equation}
This is just the L-packet (\ref{JLpacket1}) for the group $M_{1}$ and the discrete L-parameter
 $\phi_{\nu}$.  Nevertheless, some explanation of the terms is in order.   The group $({^\vee}
 M_{1})_{\phi_{\nu}}$ is the centralizer in ${^\vee}M_{1}$ of $\phi_{\nu}(W_{\mathbb{R}})$, and 
 $({^\vee}M_{1})_{\phi_{\nu}}^{\hat{J}}$ is its preimage in the covering group 
 ${^\vee}M_{1}^{\hat{J}}$.  (By Lemma \ref{hatJ}, this covering is also the dual of $\overline{M}_{1} = 
 M_{1}/J$.)  There is an isomorphism 
 between $(\mathcal{T}_{1}^{{^\vee}\theta})^{\hat{J}} /((\mathcal{T}_{1}^{{^\vee}
 \theta})^{\hat{J}})^{0}$ and $({^\vee}M_{1})_{\phi_{\nu}}^{\hat{J}}/
(({^\vee}M_{1})_{\phi_{\nu}}^{\hat{J}})^{0}$ (\emph{cf.} (\ref{compiso})).  The character
$\tau_{1}$  of the former group is identified with $\tau$ under this isomorphism.  

The L-packet $\Pi_{\phi_{1},J}^{M_{1}}$ is defined from $\Pi^{M_{1}}_{\phi_{\nu},J} $ by  translating the representations in  $\Pi^{M_{1}}_{\phi_{\nu},J} $ to the infinitesimal character $\lambda$ using Jantzen-Zuckerman translation (\cite{greenbook}*{Definition 4.5.7}).  Each translate is either zero or an  irreducible representation of a strong real form of type $J$.  By \cite{kal}*{\emph{p.} 619},  the characters of $({^\vee}M_{1})_{\phi_{\nu}}^{\hat{J}}/
(({^\vee}M_{1})_{\phi_{\nu}}^{\hat{J}})^{0}$ which correspond to
non-zero translates are precisely those characters which are trivial
on the cosets of ${^\vee}\alpha(-1)$, where $\alpha \in c\,
{^\vee}\Delta_{\phi}$ (\emph{cf.} (\ref{alphaone})).  Consequently, the L-packet for the group $M_{1}$ and the merely tempered L-parameter $\phi_{1}$ is 
\begin{equation}
\label{M1packet1}
\Pi^{M_{1}}_{\phi_{1},J} = \left\{ \left( \pi^{M_{1}}(\eta_{\tau_{1}}), \delta(\tau_{1}) \right) : \tau_{1} \in \left( (\mathcal{T}_{1}^{{^\vee}\theta})^{\hat{J}} /((\mathcal{T}_{1}^{{^\vee}\theta})^{\hat{J}})^{0} \langle 
{^\vee}\alpha(-1) : \alpha \in c \,{^\vee}\Delta_{\phi} \rangle \right)^{\wedge}  \right\}. 
\end{equation}
The representations in this set may either be thought of as the translates of the representations of (\ref{M1packet}), or equivalently as the standard tempered representations defined by $\phi_{1}$ and the embeddings $\eta_{\tau_{1}}$ (\cite{abv}*{Proposition 11.18 (a)}).

The final step in the description of $\Pi_{\phi,J}^{\mathrm{KLS}}$ is the parabolic 
induction
\begin{equation}
  \label{klsrep}
\pi(\eta_{{\tau}}) = \mathrm{ind}_{M_{1}(\mathbb{R},\delta(\tau_{1}))}^{G(\mathbb{R},\delta(\tau_{1}))}  \,
\pi^{M_{1}}(\eta_{\tau_{1}})
\end{equation}
of each (equivalence class of a) representation occurring in (\ref{M1packet1}).  Shelstad proves that 
the resulting representations are irreducible and tempered.  Observe that $\pi(\eta_{\tau_{1}})$ 
is a standard representation, so the notation conforms with that of the previous section.  At last, we 
write the packet as 
\begin{equation}
\label{klspacket}
\Pi^{\mathrm{KLS}}_{\phi,J} = \left\{ \left( \pi(\eta_{\tau_{1}}), \delta(\tau_{1}) \right) : \tau_{1} \in \left( (\mathcal{T}_{1}^{{^\vee}\theta})^{\hat{J}} /((\mathcal{T}_{1}^{{^\vee}\theta})^{\hat{J}})^{0} \langle {^\vee}\alpha(-1) : \alpha \in c \,{^\vee}\Delta_{\phi} \rangle \right)^{\wedge}  \right\}. 
\end{equation}
There is an obvious map from $(\mathcal{T}_{1}^{{^\vee}\theta})^{\hat{J}} /((\mathcal{T}_{1}^{{^\vee}\theta})^{\hat{J}})^{0}$ into ${^\vee}G_{\phi}^{\hat{J}}/ ({^\vee}G_{\phi}^{\hat{J}})^{0}$.  By \cite{kal}*{Proposition 5.9}, this map is surjective with kernel generated by $\langle {^\vee}\alpha(-1) : \alpha \in c \,{^\vee}\Delta_{\phi} \rangle$.  One may therefore also write
$$\Pi^{\mathrm{KLS}}_{\phi,J} = \left\{ \left( \pi(\eta_{\tau_{1}}), \delta(\tau_{1}) \right) : \tau \in ({^\vee}G_{\phi}^{\hat{J}}/ ({^\vee}G_{\phi}^{\hat{J}})^{0} )^{\wedge} \right\}$$
where $\tau$ and $\tau_{1}$ correspond under the isomorphism of
character groups.

\section{The comparison of tempered L-packets}

In this section we are working under the assumptions of Section
\ref{klslp}.  In particular, the L-homomorphism $\phi$ as given in
(\ref{lhom}) is tempered, and the subgroup $J \subset Z(G)$ is finite and defined
over $\mathbb{R}$.

\subsection{A comparison of component groups}
\label{compcomp}

Consider the two groups 
\begin{equation}
\label{group1}
({^d}T^{{^\vee}\theta})^{\hat{J}} /({^d}T^{{^\vee}\theta})^{\hat{J}})^{0} \langle {^\vee}\alpha(-1) : 
\alpha \in R_{\mathbb{R}}({^d}L,{^d}T) \rangle
\end{equation}
and
\begin{equation}
\label{group2}
(\mathcal{T}_{1}^{{^\vee}\theta})^{\hat{J}} /((\mathcal{T}_{1}^{{^\vee}\theta})^{\hat{J}})^{0} \langle {^\vee}\alpha(-1) : \alpha \in c \,{^\vee}\Delta_{\phi} \rangle
\end{equation}
appearing in (\ref{JLpacket2}) and  (\ref{klspacket}) respectively.  Each of these two groups is canonically isomorphic to
 ${^\vee}G_{\phi}^{\hat{J}}/ ({^\vee}G_{\phi}^{\hat{J}})^{0}$ and
therefore there is a canonical isomorphism between the two of
them. The goal of this section is to describe the latter isomorphism
without reference to ${^\vee}G_{\phi}^{\hat{J}}/ ({^\vee}G_{\phi}^{\hat{J}})^{0}$.  
 
Recall that ${^d}T$ was chosen to be a maximal torus of ${^d}L$ (\ref{dL}).
\begin{lem}
The maximal torus $\mathcal{T}_{1}$ is a subgroup of ${^d}L$.
\end{lem}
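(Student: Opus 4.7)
The plan is to reduce the statement to the assertion that the Cayley transform element $c$ itself lies in ${^d}L$. Since ${^d}L$ is a subgroup and $\mathcal{T} \subset {^d}L$ is already a maximal torus of ${^d}L$ (as noted immediately after (\ref{dL})), once $c \in {^d}L$ is established the lemma follows from
$$\mathcal{T}_{1} = c\mathcal{T}c^{-1} \subset c \cdot {^d}L \cdot c^{-1} = {^d}L.$$

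To verify $c \in {^d}L$, I would check the two defining conditions in (\ref{dL}). For the first, $\mathrm{Int}(c) \circ \lambda = \lambda$, the text already records (between (\ref{cayley}) and (\ref{Tone})) that each factor $c_{\alpha}$ acts trivially on $\lambda$; this is a consequence of the $\phi$-singularity $\langle \lambda, {^\vee}\alpha\rangle = 0$ of the roots in ${^\vee}\Delta_{\phi}$ together with the explicit formula (\ref{cayley}) for $c_{\alpha}$ as a power series in root vectors. Since the various $c_{\alpha}$ pairwise commute, the product $c = \prod_{\alpha \in {^\vee}\Delta_{\phi}^{+}} c_{\alpha}$ inherits the property $\mathrm{Int}(c) \circ \lambda = \lambda$.

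For the second condition, $\mathrm{Int}(c)\mathrm{Int}(y) \circ \lambda = \mathrm{Int}(y) \circ \lambda$, the key input is the intertwining property ${^\vee}\theta(c_{\alpha}) = c_{\alpha}^{-1}$ recorded just after (\ref{cayley}). Since the $c_{\alpha}$ commute pairwise and ${^\vee}\theta = \mathrm{Int}(y)$, this extends to $ycy^{-1} = c^{-1}$, equivalently $cy = yc^{-1}$. Composing inner automorphisms,
$$\mathrm{Int}(c)\mathrm{Int}(y) = \mathrm{Int}(cy) = \mathrm{Int}(yc^{-1}) = \mathrm{Int}(y)\mathrm{Int}(c^{-1}),$$
and applying this to $\lambda$ while using $\mathrm{Int}(c^{-1}) \circ \lambda = \lambda$ from the first condition yields $\mathrm{Int}(c)\mathrm{Int}(y) \circ \lambda = \mathrm{Int}(y) \circ \lambda$, as required.

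There is no real obstacle: both verifications are immediate from properties of $c$ already assembled in the paragraph around (\ref{cayley})--(\ref{Tone}). The content of the lemma is essentially that the Cayley transform was built out of data internal to ${^d}L$ (namely the $\phi$-singular imaginary roots), so that it does not move $\mathcal{T}$ out of ${^d}L$.
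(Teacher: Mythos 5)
Your proposal is correct and follows essentially the same route as the paper's proof: reduce to showing that $c$ satisfies the two defining conditions of ${^d}L$, verifying the first via $\phi$-singularity of the roots in ${^\vee}\Delta_{\phi}$ and the second via ${^\vee}\theta(c)=c^{-1}$. You simply make explicit the intermediate step ``$c\in{^d}L$'' that the paper leaves implicit.
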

\begin{proof}
Recall that $\mathcal{T} \subset {^d}L$, $\mathcal{T}_{1} = c\mathcal{T} c^{-1}$ (\ref{Tone}),   and $\langle \alpha, \lambda \rangle = 0$ for all $\alpha \in {^\vee}\Delta_{\phi}$.  The final property and the definition of the Cayley transform imply that $\mathrm{Int}(c) \circ \lambda = \lambda$.  We must also prove that $\mathrm{Int}(y) (c)$ also commutes with $\lambda$.  This follows from $\mathrm{Int}(y) (c) = {^\vee}\theta(c) = c^{-1}$.
\end{proof}
We now see that both ${^d}T$ and $\mathcal{T}_{1}$ are maximal tori in ${^d}L$.  The torus ${^d}T$ 
is defined by the property that ${^d}\mathfrak{t}^{-{^\vee}\theta}$ is a maximal semisimple abelian 
subalgebra of ${^d}\mathfrak{l}^{-{^\vee}\theta}$. If $\mathcal{T}_{1}$ satisfies the defining property of ${^d}T$ then 
we may take $\mathcal{T}_{1} = {^d}T$ and the canonical isomorphism between (\ref{group1}) and
 (\ref{group2}) reduces to the 
identity map.  

On the other hand, if $\mathcal{T}_{1}$ does not satisfy the defining property then $R({^d}L ,\mathcal{T}_{1})$ has an imaginary noncompact root $\beta$ (\cite{abv}*{Lemma 
12.10}), and the maximal torus $c_{\beta} \mathcal{T}_{1} c_{\beta}^{-1} \subset {^d}L$ 
has a larger $(-{^\vee}\theta)$-fixed subalgebra.  Proceeding recursively, we arrive to a product $c'$ 
of Cayley transforms, and a maximal torus $c' \mathcal{T} _{1}(c')^{-1} \subset {^d}L$ which satisfies the 
defining property of ${^d}T$.  Without loss of generality, 
\begin{equation}
\label{t1td}
{^d}T = c' \mathcal{T}_{1} (c')^{-1}.
\end{equation}
The goal then is to describe the canonical isomorphism between (\ref{group1}) and (\ref{group2}) under this hypothesis.  

Maintaining the dual notation only serves to clutter the exposition here.   We shall therefore write the preliminary results in the context where $G$ is a connected complex reductive algebraic group, $\theta$ is an involutive automorphism of $G$, and $T$ is a $\theta$-stable maximal subtorus of $G$.  
\begin{lem}
\label{toralquo}
Suppose $\beta \in R(G,T)$ is imaginary and noncompact with respect to $\theta$.  Let $c_{\beta}$ 
be the Cayley transform with respect to $\beta$ (\ref{cayley}), and set $c_{\beta} \cdot T= 
c_{\beta}Tc_{\beta}^{-1}$.  Then
\begin{enumerate}[label=(\alph*)]
\item $\ker(\beta) \subset T \cap (c_{\beta} \cdot T)$ and $\ker(\beta_{|T^{\theta}}) \subset (T \cap (c_{\beta} \cdot T))^{\theta}$.

\item $\ker(\beta_{|(T^{\theta})^{0}}) = ((c_{\beta} \cdot T)^{\theta})^{0}  \ \langle {^\vee}(c_{\beta}\beta)(-1) \rangle.$

\item The previous assertions induce a  monomorphism 
$$\ker(\beta_{|T^{\theta}}) / \, \ker(\beta_{|(T^{\theta})^{0}}) \hookrightarrow (c_{\beta} \cdot T)^{\theta}/ \, ((c_{\beta} \cdot T)^{\theta})^{0} \ \langle {^\vee}(c_{\beta}\beta)(-1) \rangle.$$

\item  There is a natural isomorphism
$$T^{\theta}/ (T^{\theta})^{0} \rightarrow  \ker(\beta_{|T^{\theta}}) / \, \ker(\beta_{|(T^{\theta})^{0}})$$
and so  a monomorphism
$$T^{\theta}/ (T^{\theta})^{0} \hookrightarrow (c_{\beta} \cdot T)^{\theta}/ \, 
((c_{\beta} \cdot T)^{\theta})^{0} \ \langle {^\vee}(c_{\beta}\beta)(-1) \rangle.$$
\end{enumerate}
\end{lem}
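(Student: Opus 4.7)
The plan is to prove (a)--(d) in sequence. The central tool is the $SL_{2}$-subgroup $G_{\beta} \subset G$ associated to $\beta$, together with the observation that the Cayley transform $c_{\beta}$ belongs to $G_{\beta}$ and that $G_{\beta}$ centralizes $\ker(\beta)$ pointwise. A second key input is that $\beta$ being imaginary forces ${^\vee}\beta \in X_{*}(T)^{\theta} \subset X_{*}((T^{\theta})^{0})$ and $H_{\beta}^{\vee} \in \mathfrak{t}^{\theta}$.

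For (a), since $c_{\beta} \in G_{\beta}$ centralizes $\ker(\beta)$, any $t \in \ker(\beta)$ satisfies $c_{\beta} t c_{\beta}^{-1} = t$, which places $t$ inside $T \cap (c_{\beta} \cdot T)$. Intersecting with $T^{\theta}$ gives the second inclusion.

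For (b), the argument splits into an identity-component comparison and a $\pi_{0}$-calculation. On Lie algebras, an element $c_{\beta} H c_{\beta}^{-1}$ lies in the Lie algebra of $(c_{\beta} \cdot T)^{\theta}$ exactly when $\theta(H) = s_{\beta}(H)$, where $s_{\beta}(H) = H - \beta(H)\, H_{\beta}^{\vee}$ is induced by $\mathrm{Ad}(c_{\beta}^{2})$. Decomposing $H = H_{+} + H_{-}$ along the $\theta$-eigenspaces, and using $H_{\beta}^{\vee} \in \mathfrak{t}^{\theta}$ together with $\beta(H_{-}) = 0$, this equation collapses to $H_{-} = 0$ and $\beta(H_{+}) = 0$. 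Hence the Lie algebra of $((c_{\beta} \cdot T)^{\theta})^{0}$ equals $\ker(\beta|_{\mathfrak{t}^{\theta}})$, which is also the Lie algebra of $\bigl(\ker(\beta|_{(T^{\theta})^{0}})\bigr)^{0}$; both are connected subgroups of $G$ with the same Lie algebra, and so coincide. Next, ${^\vee}(c_{\beta}\beta)(-1) = {^\vee}\beta(-1)$ because $c_{\beta}$ centralizes the element ${^\vee}\beta(-1) \in \ker(\beta)$. The homotopy sequence attached to $1 \to \ker(\beta|_{(T^{\theta})^{0}}) \to (T^{\theta})^{0} \xrightarrow{\beta} \mathbb{C}^{\times} \to 1$ identifies $\pi_{0}\bigl(\ker(\beta|_{(T^{\theta})^{0}})\bigr)$ with $\mathbb{Z}/\langle \beta, X_{*}((T^{\theta})^{0}) \rangle$. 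Since ${^\vee}\beta \in X_{*}((T^{\theta})^{0})$ and $\langle \beta, {^\vee}\beta \rangle = 2$, the denominator contains $2\mathbb{Z}$, so this component group has order $1$ or $2$, and the class of ${^\vee}\beta(-1)$ corresponds to the image of $\beta(\tfrac{1}{2}{^\vee}\beta) = 1$, which always generates.

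Part (c) follows by combining the inclusions of (a) with the identification of (b) and passing to quotients. For (d), the surjectivity of $\beta|_{(T^{\theta})^{0}}$ onto $\mathbb{C}^{\times}$ allows one to define $[t] \mapsto [t s^{-1}]$, where $s \in (T^{\theta})^{0}$ is any element with $\beta(s) = \beta(t)$; routine checks show this is a well-defined bijection $T^{\theta}/(T^{\theta})^{0} \to \ker(\beta|_{T^{\theta}})/\ker(\beta|_{(T^{\theta})^{0}})$, and composition with the monomorphism from (c) yields the advertised inclusion. I expect the main obstacle to lie in the $\pi_{0}$-computation in (b): specifically, controlling $\langle \beta, X_{*}((T^{\theta})^{0}) \rangle$ by the coroot pairing and verifying that $\langle {^\vee}\beta(-1)\rangle$ exhausts the resulting finite quotient. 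Once this torus-lattice bookkeeping is done, everything else reduces to the centralization statement for $c_{\beta}$ and routine diagram chases.
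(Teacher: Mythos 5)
Your (a), (c), and (d) follow essentially the same route as the paper (part (a): $\ker\beta$ centralizes the $\mathfrak{sl}_2$-triple, hence $c_\beta$; part (d): shift $t\in T^\theta$ by an element of $(T^\theta)^0$ in the fiber of $\beta(t)$ — the paper takes the specific choice $s={^\vee}\beta(\beta(t)^{1/2})$, you allow any $s$, which is an immaterial difference).

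Your (b), however, is a genuinely different argument. The paper writes $(T^\theta)^0=(1+\theta)T$ and splits $\ker(\beta|_{(T^\theta)^0})$ into the two cosets $\beta(t)=1$ and $\beta(t)=-1$, showing directly that the first is $(1+\theta)(\ker\beta)=((c_\beta\cdot T)^\theta)^0$ (via a connectedness and dimension count) and the second is the translate by ${^\vee}\beta(-1)$. You instead (i) compute $\mathrm{Lie}((c_\beta\cdot T)^\theta)$ by solving $\theta(H)=s_\beta(H)$ after the $\pm1$-eigenspace decomposition, identifying it with $\ker(d\beta|_{\mathfrak{t}^\theta})$ and deducing $((c_\beta\cdot T)^\theta)^0 = (\ker(\beta|_{(T^\theta)^0}))^0$; and (ii) control $\pi_0$ of the kernel via the long exact sequence of $1\to\ker\to(T^\theta)^0\xrightarrow{\beta}\mathbb{C}^\times\to1$, using that $\partial(1)=[{^\vee}\beta(-1)]$ because the coroot gives a square-root lift of the loop. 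Both arguments are correct; the paper's is more elementary (no connecting homomorphism), while yours makes the mechanism behind the appearance of ${^\vee}\beta(-1)$ — it is the obstruction to lifting $\pi_1(\mathbb{C}^\times)$ through $\beta$ — conceptually transparent. Two small points you elide: you should note explicitly that $\mathrm{Ad}(c_\beta)$ fixes $\ker(d\beta)$ pointwise, so that the conjugated Lie algebra $\mathrm{Ad}(c_\beta)(\ker(d\beta|_{\mathfrak{t}^\theta}))$ really is $\ker(d\beta|_{\mathfrak{t}^\theta})$ itself (this follows from your centralization remark but is used silently); and the sentence ``the class of ${^\vee}\beta(-1)$ corresponds to the image of $\beta(\tfrac12{^\vee}\beta)=1$'' is garbled as stated — what you mean is that ${^\vee}\beta(e^{\pi i s})$ is a lift of the fundamental loop, so its endpoint ${^\vee}\beta(-1)$ represents $\partial(1)$.
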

\begin{proof}
Suppose $t \in T$ and $\beta(t)=1$.  Then $\mathrm{Ad}(t)$ fixes both $X_{\beta}$ and $X_{-\beta}$, and so $t = c_{\beta}tc_{\beta}^{-1} \in T \cap (c_{\beta} \cdot T)$.  The first assertion follows.
For (b), we note that $(T^{\theta})^{0} = (1+\theta)T$ and $\theta(\beta) = \beta$.  As a result 
\begin{align}
\nonumber \ker(\beta_{|(T^{\theta})^{0}})  &=  \{ t \theta(t): t \in T, \ \beta(t\theta(t)) = 1\}\\
\nonumber &= \{ t \theta(t): t \in T, \ \beta(t)^{2} = 1\}\\
\nonumber &= \{ t\theta(t) : t \in T, \ \beta(t) = 1\} \cup \{ t\theta(t) : t \in T, \ \beta(t) = -1\} \\
& = (1+\theta) (\ker \beta) \ \cup \ \{ t\theta(t) : t \in T, \ \beta(t) = -1\} 
\label{disunion}
\end{align}
We consider the group $(1+\theta) (\ker \beta)$ first.  Let us show
that it is connected.  Let $Y$ be in the Lie algebra
$\mathfrak{t} =  (X_{*}(T)\otimes \mathbb{C})/X_{*}(T)$ and denote the
root of the Lie algebra corresponding to $\beta$ by $d\beta$.  Suppose
$\exp(2\uppi i Y) \in \ker \beta$.  This is equivalent to
$d\beta(Y) \in \mathbb{Z}$.
We may decompose $Y$ as
$$Y = \frac{d\beta(Y)}{2} {^\vee}\beta + Y_{\perp} \in
\frac{1}{2}\mathbb{Z} \,{^\vee}\beta \oplus \ker
d\beta.$$ 
 Clearly, $\exp( \ker d\beta)$ is connected, so the component
group of $\ker \beta$ is generated by elements of the form
$$\exp\left(2 \uppi i \frac{k}{2} {^\vee}\beta \right) = \exp(\uppi i
k {^\vee}\beta) = ({^\vee}\beta(-1))^{k}, \quad k \in \mathbb{Z}.$$
The component group is thus generated by ${^\vee}\beta(-1)$.  We
recall that $\beta$ is imaginary so that $\theta \circ {^\vee}\beta =
{^\vee}\beta$.  This implies
$(1+\theta)({^\vee}\beta(-1)) = {^\vee}\beta(-1)^{2} = 1$, and we deduce
that $(1+\theta)(\ker \beta)$ is connected.

As $(1+\theta) (\ker \beta)$ is connected, it is also a torus.  The first assertion tells us that it is a subtorus of the torus $((c_{\beta} \cdot T)^{\theta})^{0}$.  The dimensions of the tori are equal as may be seen from 
\begin{align*}
\dim (1+\theta) (\ker \beta) & = \dim \ker (d\beta_{|\mathfrak{t}^{\theta}})\\
&= (\dim \mathfrak{t}^{\theta}) - 1\\
& = \dim (c_{\beta} \mathfrak{t})^{\theta}\\
&= \dim ((c_{\beta} \cdot T)^{\theta})^{0}
\end{align*}
(\cite{beyond}*{(6.65b), Proposition 6.69}).
Consequently, the first set in (\ref{disunion}) is  $((c_{\beta} \cdot T)^{\theta})^{0}$.

To compute the second set in (\ref{disunion}), suppose $t \in T$ and $\beta(t) = -1$.  Observe that ${^\vee}\beta(i) \in T$ and $\beta(\, {^\vee}\beta(i)) = i^{\langle \beta, {^\vee}\beta \rangle} = -1$.  Setting $t_{1} = t\, {^\vee}\beta(i)^{-1}$, we see that $t = t_{1} \, {^\vee}\beta(i)$, where $\beta(t_{1}) = 1$.  Moreover,
$$t \theta(t) = (t_{1} \theta(t_{1})) \ ({^\vee}\beta(i) \, \theta (^{\vee}\beta(i))).$$
We have already proven that $(t_{1} \theta(t_{1}))  \in ((c_{\beta}
\cdot T)^{\theta})^{0}$.  For the second term of the product, we
compute $({^\vee}\beta(i) \, \theta (^{\vee}\beta(i))) =
{^\vee}\beta(-1)$, and  since $\beta( {^\vee}\beta(-1)) = (-1)^{2} = 1$, we have
$${^\vee}\beta(-1) = c_{\beta} \, {^\vee}\beta(-1) c_{\beta}^{-1} = {^\vee}(c_{\beta}\beta)(-1).$$
This proves (b).  Assertion (c) follows immediately.

For the isomorphism of (d), consider 
$${^\vee}\beta( \beta(t)^{1/2}), \quad t \in T.$$ 
We compute
$$\beta \left( {^\vee}\beta(\beta(t)^{1/2}) \right)  = \beta(t)^{\langle \beta, {^\vee}\beta \rangle/2} = \beta(t).$$
In addition, as $\beta$ is imaginary, ${^\vee}\beta( \beta(t)^{1/2}) \in T^{\theta}$.  In fact,
$${^\vee}\beta( \beta(t)^{1/2})  \in {^\vee}\beta( \mathbb{C}^{\times})  \subset (T^{\theta})^{0}.$$
Suppose that $t \in T^{\theta}$ and set $t_{1} = t\, {^\vee}\beta(\beta(t)^{-1/2})$.  Then $t_{1} \in 
T^{\theta}$ and $\beta(t_{1}) = 1$.    It follows that $t(T^{\theta})^{0} = t_{1} (T^{\theta})^{0}$ and in turn that 
\begin{align*}
T^{\theta}/(T^{\theta})^{0} &\cong (\ker \beta_{|T^{\theta}}) (T^{\theta})^{0}/ (T^{\theta})^{0}\\
 &\cong \ker \beta_{|T^{\theta}} / ((T^{\theta})^{0} \cap \ker \beta_{|T^{\theta}} ) \\
&\cong \ker \beta_{|T^{\theta}} / \ker \beta_{|(T^{\theta})^{0}}.
\end{align*}
The final monomorphism is now a consequence of this isomorphism and part (c).
\end{proof}
We may apply Lemma \ref{toralquo} repeatedly.  Suppose $\beta_{1} \in R(G,T)$ and
 $\beta_{2} \in R(G, c_{\beta_{1}} \cdot T)$ satisfy the hypotheses of Lemma \ref{toralquo} for the 
 respective tori.  
 Since $c_{\beta_{1}}\beta_{1} \in R(G, c_{\beta_{1}} \cdot T)$ is real with respect to 
 $\theta$ and $\beta_{2}$ is imaginary, the two roots are orthogonal. Consequently,
$$\beta_{2}\left( {^\vee}(c_{\beta_{1}} \beta_{1}) (-1)  \right) = (-1)^{\langle \beta_{2}, 
{^\vee}(c_{\beta_{1}} \beta_{1}) \rangle} = (-1)^{0} = 1,$$
and 
$$ {^\vee}(c_{\beta_{1}} \beta_{1} ) (-1)= c_{\beta_{2}} \, {^\vee}(c_{\beta_{1}} \beta_{1} (-1))  \
c_{\beta_{2}}^{-1}  \in c_{\beta_{2}} c_{\beta_{1}}\cdot T.$$
Similar arguments imply that $c_{\beta_{2}} c_{\beta_{1}} \beta_{1} = c_{\beta_{1}} \beta_{1}$ is a 
real root in $R(G, c_{\beta_{2}} c_{\beta_{1}} \cdot T)$.  
Applying Lemma \ref{toralquo} twice, we obtain monomorphisms
\begin{align*}
T^{\theta}/(T^{\theta})^{0} &\hookrightarrow (c_{\beta_{1}} \cdot T)^{\theta}/ \, 
((c_{\beta_{1}} \cdot T)^{\theta})^{0}  \langle {^\vee}(c_{\beta_{1}}\beta_{1})(-1) \rangle \\
&\hookrightarrow (c_{\beta_{2}} c_{\beta_{1}} \cdot T)^{\theta}/ \, 
((c_{\beta_{2}} c_{\beta{1}}\cdot T)^{\theta})^{0}  \langle {^\vee}(c_{\beta_{1}}\beta_{1})(-1), {^\vee}
(c_{\beta_{2}} \beta_{2}) (-1) \rangle.
\end{align*}
Arguing inductively, we obtain the following corollary.
\begin{cor}
\label{toralquo1}
Suppose $c' \cdot T$ is the maximal torus obtained by iterated Cayley transforms $c' = c_{\beta_{m}} 
\cdots c_{\beta_{1}}$ from $T$ by imaginary noncompact roots $\beta_{1}, \ldots ,\beta_{m}$ with 
respect to $\theta$.   Then there is a natural monomorphism
$$
T^{\theta}/ (T^{\theta})^{0} \hookrightarrow (c' \cdot T)^{\theta}/ \, 
((c' \cdot T)^{\theta})^{0}\ \langle {^\vee}(c_{\beta_{j}}\beta_{j})(-1) : 1 \leq j \leq m \rangle.$$
\end{cor}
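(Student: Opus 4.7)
The plan is to induct on $m$, using Lemma~\ref{toralquo}(d) at each step and following the two-fold application illustrated immediately before the corollary. The base case $m=1$ is Lemma~\ref{toralquo}(d) itself. For the inductive step, set $c'' = c_{\beta_{m-1}}\cdots c_{\beta_{1}}$ and assume a monomorphism
$T^{\theta}/(T^{\theta})^{0} \hookrightarrow (c''\cdot T)^{\theta}/((c''\cdot T)^{\theta})^{0} \langle {^\vee}(c_{\beta_{j}}\beta_{j})(-1) : 1 \leq j \leq m-1 \rangle$
has already been constructed. I would apply Lemma~\ref{toralquo}(d) to the torus $c''\cdot T$ with the imaginary noncompact root $\beta_{m}$, obtaining a monomorphism
$(c''\cdot T)^{\theta}/((c''\cdot T)^{\theta})^{0} \hookrightarrow (c'\cdot T)^{\theta}/((c'\cdot T)^{\theta})^{0} \langle {^\vee}(c_{\beta_{m}}\beta_{m})(-1)\rangle$,
then descend this map to the further quotient by the earlier ${^\vee}(c_{\beta_{j}}\beta_{j})(-1)$ for $j<m$, and compose with the inductive monomorphism to finish.

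The descent rests on three technical points. First, each $c_{\beta_{j}}\beta_{j}$ with $j < m$ is real with respect to $\theta$ in $R(G, c_{\beta_{j}}\cdots c_{\beta_{1}}\cdot T)$, and the subsequent Cayley transforms $c_{\beta_{i}}$ for $i > j$ act trivially on it because imaginary and real roots are orthogonal---this orthogonality follows from $\theta$-invariance of any Weyl-invariant bilinear form on $X^{*}(T)\otimes\mathbb{R}$. Hence $c_{\beta_{j}}\beta_{j}$ remains a real root in $R(G, c''\cdot T)$, and ${^\vee}(c_{\beta_{j}}\beta_{j})(-1)$ lies in $(c''\cdot T)^{\theta}$. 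Second, the same orthogonality yields $\beta_{m}({^\vee}(c_{\beta_{j}}\beta_{j})(-1)) = 1$, so ${^\vee}(c_{\beta_{j}}\beta_{j})(-1) \in \ker \beta_{m} \subset (c''\cdot T)\cap(c_{\beta_{m}}c''\cdot T)$ by Lemma~\ref{toralquo}(a); consequently the old generators persist unchanged in $(c'\cdot T)^{\theta}$. Third, the explicit formula $t \mapsto t\,{^\vee}\beta_{m}(\beta_{m}(t)^{-1/2})$ used to prove Lemma~\ref{toralquo}(d) acts as the identity on $\ker\beta_{m}\cap(c''\cdot T)^{\theta}$, so the monomorphism of that lemma carries the subgroup of old generators bijectively onto the same subgroup inside $(c'\cdot T)^{\theta}$.

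Given these facts, a short diagram chase confirms that the kernel of the composite map from $(c''\cdot T)^{\theta}/((c''\cdot T)^{\theta})^{0}$ into $(c'\cdot T)^{\theta}/((c'\cdot T)^{\theta})^{0}\langle {^\vee}(c_{\beta_{j}}\beta_{j})(-1) : 1 \leq j \leq m\rangle$ is precisely the image of the old generators, so the induced map on the further source quotient remains injective and composes correctly with the inductive monomorphism. I expect the main obstacle to lie in this bookkeeping---controlling how the prior generators behave under each new Cayley transform---but once the three points above are in hand the inductive step is essentially the same as the two-step case already spelled out in the excerpt.
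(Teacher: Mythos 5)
Your proposal is correct and takes essentially the same route as the paper: the paper deduces the corollary by iterating Lemma \ref{toralquo}(d), having checked in the two-step case that each previously produced generator ${^\vee}(c_{\beta_{j}}\beta_{j})(-1)$ is real, is annihilated by the next imaginary root (real/imaginary orthogonality), and hence is fixed by the next Cayley transform and persists unchanged in the new torus. Your three technical points and the concluding kernel computation are precisely the bookkeeping the paper compresses into ``arguing inductively.''
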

\begin{prop}
\label{toralquo2}
Suppose $c'$ in (\ref{t1td}) is the iterated composition of Cayley
transforms of imaginary noncompact roots 
$\beta_{1}, \ldots, \beta_{m}$.  Then the following inclusion diagram is commutative.
$$\xymatrix{
\frac{ \mathcal{T}_{1}^{{^\vee}\theta} }{ (\mathcal{T}_{1}^{{^\vee}\theta})^{0}  }
\ar@{^{(}->}[rr] \ar@{->>}[d]  & & 
\frac{{^d}T^{{^\vee}\theta}}{ ({^d}T^{{^\vee}\theta})^{0} \, 
 \langle {^\vee}(c_{\beta_{j}}\beta_{j})(-1) : 1 \leq j \leq m \rangle }  \ar@{->>}[d] \\
\frac{ \mathcal{T}_{1}^{{^\vee}\theta} }{ (\mathcal{T}_{1}^{{^\vee}\theta})^{0}  \, 
\langle {^\vee}\alpha(-1) : \ \alpha \in c\, {^\vee}\Delta_{\phi} \rangle}
\ar@{->}[rr]^{\cong}  \ar[dr]_{\cong}
& & 
\frac{{^d}T^{{^\vee}\theta}}{ ({^d}T^{{^\vee}\theta})^{0}  \, \langle {^\vee}\alpha(-1) : \ \alpha \in 
R_{\mathbb{R}}({^d}L, {^d}T) \rangle}  \ar[dl]^{\cong}\\
& {^\vee}G_{\phi} / ({^\vee}G_{\phi})^{0} & 
}$$
\end{prop}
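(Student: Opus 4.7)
My plan is to assemble the diagram from the pieces already developed and then verify commutativity by tracking elements through the canonical isomorphisms to ${^\vee}G_{\phi}/({^\vee}G_{\phi})^{0}$. The top horizontal monomorphism is supplied by Corollary \ref{toralquo1}, applied with $G$, $\theta$, $T$ replaced by ${^\vee}G$, ${^\vee}\theta$, $\mathcal{T}_{1}$, respectively, and with the iterated Cayley transform $c' = c_{\beta_{m}}\cdots c_{\beta_{1}}$ transporting $\mathcal{T}_{1}$ to ${^d}T$ as in (\ref{t1td}). Concretely, this monomorphism is induced by $t \mapsto c' t (c')^{-1}$. The vertical maps are the obvious projections by the further subgroups indicated.

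Next I would verify that each right vertical arrow is well-defined on the codomain of the top monomorphism. For this one needs $c_{\beta_{j}}\beta_{j} \in R_{\mathbb{R}}({^d}L,{^d}T)$ for every $j$. Reality with respect to ${^\vee}\theta$ is exactly the Cayley computation carried out in the proof of Lemma \ref{toralquo}(b), and $\phi$-singularity follows from the fact that each $\beta_{j}$ is a root of ${^d}L$ (hence $\phi$-singular by the definition (\ref{dL}) of ${^d}L$) together with $\mathrm{Ad}(c_{\beta_{j}})\lambda = \lambda$, which is proved just as in Lemma~4.1. The same argument applied to the roots $\alpha \in c\,{^\vee}\Delta_{\phi}$ shows that the transported roots $c' \alpha (c')^{-1}$ also lie in $R_{\mathbb{R}}({^d}L,{^d}T)$, so that the top-and-right composite factors through the left vertical's additional quotient and induces the middle horizontal arrow. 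The two diagonals to ${^\vee}G_{\phi}/({^\vee}G_{\phi})^{0}$ are the canonical isomorphisms of (\ref{compiso}) (and its analogue for $\mathcal{T}_{1}$), coming from the surjections of the fixed-point subgroups, regarded as suitable maximal tori in ${^\vee}G_{\phi} = ({^d}L)^{{^\vee}\theta}$, onto the component group.

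Commutativity of the lower triangle amounts to the claim that, for every $t \in \mathcal{T}_{1}^{{^\vee}\theta}$, the elements $t$ and $c' t (c')^{-1}$ represent the same class in ${^\vee}G_{\phi}/({^\vee}G_{\phi})^{0}$. This is the main obstacle, because $c'$ itself need not lie in ${^\vee}G_{\phi}$ (indeed $\mathrm{Ad}(y)(c') = (c')^{-1}$). I would prove it inductively, reducing to a single Cayley transform $c_{\beta}$ by an imaginary noncompact $\phi$-singular root $\beta$. Using Lemma \ref{toralquo}(a) one factors $t = t_{0}\cdot {^\vee}\beta(s)$ with $t_{0} \in \ker\beta$ (so $c_{\beta} t_{0} c_{\beta}^{-1} = t_{0}$), thereby reducing to showing that ${^\vee}\beta(s)$ and $c_{\beta}\,{^\vee}\beta(s)\,c_{\beta}^{-1}$ differ by an element of $({^\vee}G_{\phi})^{0}$. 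Both elements lie in the rank-one subgroup attached to $\beta$, and an elementary $SL_{2}$ (or $PGL_{2}$) computation, combined with the fact that $\beta$ is ${^\vee}\theta$-imaginary and $\phi$-singular so that ${^\vee}\beta(\mathbb{C}^{\times})$ and its Cayley conjugate both sit inside the connected reductive subgroup $({^d}L)^{{^\vee}\theta,0} = ({^\vee}G_{\phi})^{0}$, places this difference in the identity component. Granting this key computation, the upper square commutes, the middle horizontal arrow is then identified as the induced isomorphism between the two presentations of ${^\vee}G_{\phi}/({^\vee}G_{\phi})^{0}$, and the proposition follows.
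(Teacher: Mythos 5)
Your outline identifies the right moving parts (Corollary~\ref{toralquo1} for the top arrow, descent for the lower horizontal arrow, the two canonical identifications with ${^\vee}G_{\phi}/({^\vee}G_{\phi})^{0}$), but the central step as you state it is ill-posed, and it stems from a misreading of how the monomorphism of Lemma~\ref{toralquo}(d) actually works.

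You assert that the top monomorphism ``is induced by $t \mapsto c' t (c')^{-1}$'' and reduce commutativity of the lower triangle to showing that $t$ and $c' t (c')^{-1}$ lie in the same class of ${^\vee}G_{\phi}/({^\vee}G_{\phi})^{0}$. But for generic $t \in \mathcal{T}_{1}^{{^\vee}\theta}$ the element $c_{\beta} t c_{\beta}^{-1}$ is \emph{not} fixed by ${^\vee}\theta$: one computes ${^\vee}\theta(c_{\beta} t c_{\beta}^{-1}) = c_{\beta}^{-1} t c_{\beta}$, which equals $c_{\beta} t c_{\beta}^{-1}$ only when $\beta(t)=1$. So $c' t (c')^{-1}$ does not in general lie in $({^d}T)^{{^\vee}\theta}$, nor in ${^\vee}G_{\phi}$, and the quantity whose class you want to compute is not even defined. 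The ``key computation'' you then propose---that $c_{\beta}\,{^\vee}\beta(s)\,c_{\beta}^{-1}$ and ${^\vee}\beta(s)$ differ by an element of $({^\vee}G_{\phi})^{0}$---cannot be carried out, because $c_{\beta}\,{^\vee}\beta(s)\,c_{\beta}^{-1}$ is not $\,{^\vee}\theta$-fixed (the Cayley transform sends ${^\vee}\beta$ to a real coroot, so $w_{\beta}\cdot{^\vee}\beta(s) = {^\vee}\beta(s^{-1}) \neq {^\vee}\beta(s)$ for generic $s$) and hence is not in ${^\vee}G_{\phi}$ at all.

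The monomorphism in Lemma~\ref{toralquo}(c)--(d) and Corollary~\ref{toralquo1} is not conjugation by $c'$. It is the composite ``adjust by an element of the identity component, then include.'' Concretely, for a single Cayley transform, the class of $t$ is sent to the class of $t_{0} = t\,{^\vee}\beta(\beta(t)^{-1/2}) \in \ker\beta_{|T^{\theta}}$, and this $t_{0}$ sits inside $T \cap c_{\beta}T$ as \emph{the same element} (it is incidentally fixed by $c_{\beta}$-conjugation precisely because $\beta(t_{0})=1$). Since ${^\vee}\beta(\beta(t)^{1/2}) \in {^\vee}\beta(\mathbb{C}^{\times}) \subset (\mathcal{T}_{1}^{{^\vee}\theta})^{0} \subset ({^\vee}G_{\phi})^{0}$, the elements $t$ and $t_{0}$ trivially have the same image in ${^\vee}G_{\phi}/({^\vee}G_{\phi})^{0}$, and the lower triangle commutes without any rank-one computation. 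Once this is corrected, your remaining observations (reality of $c_{\beta_{j}}\beta_{j}$, $\phi$-singularity from (\ref{dL}), preservation of the real roots $c\,{^\vee}\Delta_{\phi}$ under the imaginary Cayley transforms so that the lower horizontal arrow is induced by the top one) do line up with the paper's argument. One small further difference: the paper applies Corollary~\ref{toralquo1} to ${^d}L$, not ${^\vee}G$, which is the natural choice since the tori, the involution $\mathrm{Int}(y)$, and the Cayley transforms are all internal to ${^d}L$.
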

\begin{proof}
The top horizontal arrow is an application of Corollary \ref{toralquo1}  to the algebraic group ${^d}L$ 
containing the tori $\mathcal{T}_{1}$ and ${^d}T$.  The left vertical
arrow is obvious.  The right vertical arrow is  clear once one recalls 
that the roots $c_{\beta_{j}}\beta_{j}$  are all real 
with respect to ${^\vee}\theta$.  The bottom two arrows were addressed
in Sections \ref{abvlp} and \ref{klslp}.  The lower horizontal arrow
is induced by the upper horizontal arrow  since the roots of  $c\,
{^\vee}\Delta_{\phi}$ are real and are not altered by the imaginary Cayley transforms.  In this way the 
diagram is commutative.  The lower horizontal arrow is an isomorphism by virtue of the lower two 
isomorphisms. 
\end{proof}
We would like a version of Proposition \ref{toralquo2} which encompasses the covering groups 
(\ref{Jhatcover}) for our finite central subgroup $J$.  An application
of Lemma \ref{hatJ} with $T$ a 
maximal torus in $G$ implies that ${^\vee}G^{\hat{J}} = {^\vee}\overline{G}$, where $\overline{G} = 
G/J$.   Let $\mathcal{T}_{1}^{\hat{J}}$ and ${^d}T^{\hat{J}}$ be the respective preimages of $
\mathcal{T}_{1}$ and ${^d}T$ under (\ref{Jhatcover}).  These preimages are maximal tori in 
${^\vee}\overline{G}$.  There is an obvious bijection between $R({^\vee}G, \mathcal{T}_{1})$ and 
$R({^\vee}\overline{G}, \mathcal{T}_{1}^{\hat{J}})$.  These kinds of bijections allow one to lift the 
Cayley transforms on maximal tori in ${^\vee}G$ to Cayley transforms in ${^\vee}\overline{G}$.  In 
particular, (\ref{t1td}) lifts to an equation 
\begin{equation}
\label{t1td1}
\mathcal{T}_{1}^{\hat{J}} = c' \cdot {^d}T^{\hat{J}}.
\end{equation}  

The following lemma is an extension of Lemma \ref{toralquo} to the setting of covers.
\begin{lem}
\label{toralquo3}
Suppose 
$$1 \rightarrow J \rightarrow G^{J} \rightarrow G \rightarrow 1$$
is a covering of $G$ in which $G^{J}$ is a connected reductive algebraic group with a finite central 
subgroup $J$.  For any subgroup $H  \subset G$ let $H^{J}$ denote its preimage in $G^{J}$.  
Suppose $\beta \in R(G,T)$ is imaginary and noncompact with respect to $\theta$ and identify 
$\beta$ with its corresponding root in $R(G^{J}, T^{T})$.  Let $c_{\beta}$ 
be the Cayley transform with respect to $\beta$ (\ref{cayley}), and set $c_{\beta} \cdot T^{J}= 
c_{\beta}T^{J}c_{\beta}^{-1}$.  Then
\begin{enumerate}[label=(\alph*)]
\item $(\ker(\beta_{|T^{\theta}}))^{J} \subset ((T \cap (c_{\beta} \cdot T))^{\theta})^{J}$.

\item $(T^{\theta})^{J} = (\ker(\beta_{|T^{\theta}}))^{J} \ {^\vee}\beta(\mathbb{C}^{\times})$

\item $(\ker(\beta_{|T^{\theta}}) )^{J} \cap ((T^{\theta})^{J})^{0} = 
(((c_{\beta} \cdot T)^{\theta})^{J})^{0}  \ \langle {^\vee}(c_{\beta}\beta)(-1) \rangle.$

\item The previous assertions induce a  monomorphism 
$$(\ker(\beta_{|T^{\theta}}))^{J} / \, (\ker(\beta_{|T^{\theta}}) )^{J} \cap ((T^{\theta})^{J})^{0}
\hookrightarrow ((c_{\beta} \cdot T)^{\theta})^{J} / \, (((c_{\beta} \cdot T)^{\theta})^{J})^{0} \ \langle 
{^\vee}(c_{\beta}\beta)(-1) \rangle.$$
\item  There is a natural isomorphism
$$(T^{\theta})^{J}/ ((T^{\theta})^{J})^{0} \rightarrow  (\ker(\beta_{|T^{\theta}}))^{J} / \, (\ker(\beta_{|T^{\theta}}) )^{J} \cap ((T^{\theta})^{J})^{0}$$
and so  a  monomorphism
$$
(T^{\theta})^{J}/ ((T^{\theta})^{J})^{0} \hookrightarrow((c_{\beta} \cdot T)^{\theta})^{J} / \, 
(((c_{\beta} \cdot T)^{\theta})^{J})^{0} \ \langle {^\vee}(c_{\beta}\beta)(-1) \rangle.$$

\end{enumerate}
\end{lem}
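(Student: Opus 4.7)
The strategy is to lift each step of the proof of Lemma \ref{toralquo} to the covering $G^{J} \to G$, exploiting the fact that $J$ is central and finite. Since the covering is an isogeny with central kernel, the involution $\theta$ lifts canonically to $G^{J}$; the coroots ${}^\vee\alpha \colon \mathbb{C}^{\times} \to T^{J}$ are defined canonically in the cover (as $T^{J}$ is connected, being the preimage of the connected torus $T$ in the connected group $G^{J}$); and the Cayley transform $c_{\beta}$ makes sense in $G^{J}$ via its Lie-algebraic definition.

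Part (a) is then immediate: the inclusion $\ker(\beta_{|T^{\theta}}) \subset (T \cap (c_{\beta}\cdot T))^{\theta}$ from Lemma \ref{toralquo}(a) pulls back to the desired inclusion in $G^{J}$ under the covering map.

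For part (b), I will imitate the argument at the end of the proof of Lemma \ref{toralquo}(d). Given $\tilde{t} \in (T^{\theta})^{J}$ projecting to $t \in T^{\theta}$, I form $\tilde{t}_{1} = \tilde{t} \cdot {}^\vee\beta(\beta(t)^{-1/2})$. Since $\beta$ is imaginary, ${}^\vee\beta(\mathbb{C}^{\times}) \subset (T^{\theta})^{J}$, so $\tilde{t}_{1} \in (T^{\theta})^{J}$; moreover the projection of $\tilde{t}_{1}$ lies in $\ker \beta_{|T^{\theta}}$ by the computation $\beta({}^\vee\beta(z)) = z^{2}$. Hence $\tilde{t}_{1} \in (\ker\beta_{|T^{\theta}})^{J}$, and the required decomposition $\tilde{t} = \tilde{t}_{1} \cdot {}^\vee\beta(\beta(t)^{1/2})$ of (b) follows.

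Part (c) is the heart of the argument and mirrors the proof of Lemma \ref{toralquo}(b). An element $\tilde{t}$ in the left-hand side projects to some $t \in \ker(\beta_{|(T^{\theta})^{0}})$, which by Lemma \ref{toralquo}(b) equals $((c_{\beta}\cdot T)^{\theta})^{0}\langle {}^\vee(c_{\beta}\beta)(-1)\rangle$. Splitting on whether $\beta$ evaluates to $1$ or $-1$ on the lifted factors, and using continuity of the map $\tilde{t}_{1} \mapsto \tilde{t}_{1}\theta(\tilde{t}_{1})$ to keep the image in the identity component, I match each case with a coset on the right. Parts (d) and (e) then follow formally as in the original lemma, with the isomorphism in (e) produced by the assignment $\tilde{t} \mapsto \tilde{t} \cdot {}^\vee\beta(\beta(\pi(\tilde{t}))^{-1/2})$. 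The main obstacle is the connectedness bookkeeping in (c): I need to verify that $(((c_{\beta}\cdot T)^{\theta})^{J})^{0}$ surjects onto $((c_{\beta}\cdot T)^{\theta})^{0}$ so that the decomposition from the original lemma lifts to the cover modulo $J$, and that the element ${}^\vee(c_{\beta}\beta)(-1) \in G^{J}$ represents the same non-identity coset modulo the identity component as in the uncovered setting.
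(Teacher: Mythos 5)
Your parts (a), (b), (d) and (e) track the paper's proof. The difficulty is part (c), and specifically the forward inclusion. Your plan is to project an element $\tilde{t}$ of $(\ker(\beta_{|T^{\theta}}))^{J}\cap((T^{\theta})^{J})^{0}$ down to $G$, apply Lemma \ref{toralquo}(b) there, and lift the resulting decomposition back to $G^{J}$. But a lift is determined only up to the kernel $J$, so this argument places $\tilde{t}$ in $(((c_{\beta}\cdot T)^{\theta})^{J})^{0}\,\langle{}^{\vee}(c_{\beta}\beta)(-1)\rangle\cdot J$ rather than in the right-hand side itself --- you concede as much when you say the decomposition ``lifts to the cover modulo $J$.'' There is no reason for $J$ to lie in $(((c_{\beta}\cdot T)^{\theta})^{J})^{0}\,\langle{}^{\vee}(c_{\beta}\beta)(-1)\rangle$: the whole point of introducing the cover is that $J$ contributes new components to these fixed-point groups, and $J$ may contain elements of odd order while the quotient of the right-hand side by its identity component has order at most $2$. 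The two checks you propose --- that $(((c_{\beta}\cdot T)^{\theta})^{J})^{0}$ surjects onto $((c_{\beta}\cdot T)^{\theta})^{0}$, and that ${}^{\vee}(c_{\beta}\beta)(-1)$ represents the expected coset --- are true but do not remove this $J$-ambiguity.

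What your projection argument discards is that $\tilde{t}$ lies in $((T^{\theta})^{J})^{0}$ \emph{upstairs}, not merely that its image lies in $(T^{\theta})^{0}$. The paper exploits this by substituting part (b) --- which you prove in the cover but then never use in (c) --- into the left-hand side: from $(T^{\theta})^{J}=(\ker(\beta_{|T^{\theta}}))^{J}\,{}^{\vee}\beta(\mathbb{C}^{\times})$ one gets $((T^{\theta})^{J})^{0}=((\ker(\beta_{|T^{\theta}}))^{J})^{0}\,{}^{\vee}\beta(\mathbb{C}^{\times})$, hence $\tilde{t}=\tilde{t}_{0}\,{}^{\vee}\beta(z)$ with $\tilde{t}_{0}\in((\ker(\beta_{|T^{\theta}}))^{J})^{0}$; since $\tilde{t}$ and $\tilde{t}_{0}$ both project into $\ker\beta$, we get $z^{2}=1$, so ${}^{\vee}\beta(z)\in\langle{}^{\vee}\beta(-1)\rangle=\langle{}^{\vee}(c_{\beta}\beta)(-1)\rangle$. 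It then remains only to identify $((\ker(\beta_{|T^{\theta}}))^{J})^{0}$ with $(((c_{\beta}\cdot T)^{\theta})^{J})^{0}$, which follows from an inclusion together with an equality of dimensions, using Lemma \ref{toralquo}(b). You should rework part (c) along these lines; as written, your argument establishes the claim only up to multiplication by $J$.
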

\begin{proof}
The first assertion follows from Lemma \ref{toralquo} (a).  For (b)
suppose $t \in (T^{\theta})^{J}$.  Let $\bar{t}$ denote the image of $t$ in $T$.  By definition, $\bar{t}$ 
is fixed by $\theta$.    Set $t_{1} = t \, {^\vee}\beta( \beta(t)^{-1/2})$.  Then $\bar{t} = \bar{t}_{1} \,{^\vee}\beta( \beta(\bar{t})^{1/2})$, and following the arguments in the proof Lemma \ref{toralquo}, we find $t_{1} \in (\ker( \beta_{|T^{\theta}} ))^{J}$.  This implies  $(T^{\theta})^{J} \subset  (\ker(\beta_{|T^{\theta}}))^{J} \ {^\vee}\beta(\mathbb{C}^{\times})$.  The reverse inclusion follows from $\beta$ being imaginary.   
By substituting (b) into the left-hand side of (c),  we obtain
$$(\ker(\beta_{|T^{\theta}}) )^{J} \cap ((T^{\theta})^{J})^{0} = (\ker(\beta_{|T^{\theta}}) )^{J} \cap \ \left( (\ker(\beta_{|T^{\theta}}) )^{J} \right)^{0} \, {^\vee}\beta(\mathbb{C}^{\times}).$$
The intersection of ${^\vee}\beta(\mathbb{C}^{\times})$ with $(\ker (\beta_{|T^{\theta}}))^{J}$ is ${^\vee}{\beta}(\pm 1)$.  Furthermore, as seen in the proof of Lemma \ref{toralquo}, ${^\vee}\beta(\pm 1) = {^\vee}(c_{\beta} \beta)(\pm 1)$.   Thus, 
$$(\ker(\beta_{|T^{\theta}}) )^{J} \cap ((T^{\theta})^{J})^{0} = 
\left( (\ker(\beta_{|T^{\theta}}) )^{J} \right)^{0} \, \langle {^\vee}(c_{\beta} \beta)(-1) \rangle.$$
Clearly, $ (\ker(\beta_{|T^{\theta}}) )^{J}  \supset 
 (\ker(\beta_{|(T^{\theta})^{0}}) )^{J}$ and the two groups have the
same dimension.  This implies that $\left( (\ker(\beta_{|T^{\theta}}) )^{J} \right)^{0} = \left( (\ker(\beta_{|(T^{\theta})^{0}}) )^{J} \right)^{0}$.  Therefore Lemma \ref{toralquo} (b) implies 
$$\left( (\ker(\beta_{|T^{\theta}}) )^{J} \right)^{0} = \left( \left(((c_{\beta} \cdot T)^{\theta})^{0}
 \langle {^\vee}(c_{\beta} \beta)(-1) \rangle \right)^{J} \right)^{0} = 
 \left( ( c_{\beta} \cdot T  )^{J} \right)^{0}.$$
 This proves (c).  
 
 Part (d) is immediate from parts (a) and (c).  Part (e) follows the proof of Lemma \ref{toralquo} (d).
\end{proof}
The following proposition ensues from Lemma \ref{toralquo2} just as Proposition \ref{toralquo1} ensues from Lemma \ref{toralquo}.
\begin{prop}
\label{toralquo4}
Suppose $c'$ in (\ref{t1td1}) is the iterated composition of imaginary noncompact roots 
$\beta_{1}, \ldots, \beta_{m}$.  Then the following inclusion diagram is commutative.
$$\xymatrix{
\frac{ (\mathcal{T}_{1}^{{^\vee}\theta})^{\hat{J}} }{ ((\mathcal{T}_{1}^{{^\vee}\theta})^{\hat{J}} )^{0}  }
\ar@{^{(}->}[rr] \ar@{->>}[d]  & & 
\frac{{(^d}T^{{^\vee}\theta})^{\hat{J} }}{ (({^d}T^{{^\vee}\theta})^{\hat{J}} )^{0}  
 \langle {^\vee}(c_{\beta_{j}}\beta_{j})(-1) : 1 \leq j \leq m \rangle }  \ar@{->>}[d] \\
\frac{ (\mathcal{T}_{1}^{{^\vee}\theta})^{\hat{J}}  }{ (( \mathcal{T}_{1}^{{^\vee}\theta})^{\hat{J}} )^{0}
   \  \langle {^\vee}\alpha(-1) : \, \alpha \in c\, {^\vee}\Delta_{\phi} \rangle}
\ar@{->}[rr]^{\cong}  \ar[dr]_{\cong}
& & 
\frac{({^d}T^{{^\vee}\theta})^{\hat{J}} }{ (({^d}T^{{^\vee}\theta})^{\hat{J}} )^{0}  \ \langle {^\vee}\alpha(-1) : \,\alpha \in R_{\mathbb{R}}(({^d}L)^{\hat{J}} , ({^d}T)^{\hat{J}} ) \rangle}  \ar[dl]^{\cong}\\ 
& ({^\vee}G_{\phi})^{\hat{J}}  / (({^\vee}G_{\phi})^{\hat{J}}) ^{0} &  }$$
\end{prop}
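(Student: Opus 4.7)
My plan follows the author's hint that Proposition \ref{toralquo4} should be derived from Lemma \ref{toralquo3} by the same argument that builds Proposition \ref{toralquo2} out of Lemma \ref{toralquo} via Corollary \ref{toralquo1}. So the first step is to promote Lemma \ref{toralquo3} to an iterated statement: given the imaginary noncompact roots $\beta_{1},\dots,\beta_{m}$ and the composite Cayley transform $c' = c_{\beta_{m}}\cdots c_{\beta_{1}}$, I would show inductively that there is a natural monomorphism
\[
(\mathcal{T}_{1}^{{^\vee}\theta})^{\hat{J}}/\bigl((\mathcal{T}_{1}^{{^\vee}\theta})^{\hat{J}}\bigr)^{0}
\ \hookrightarrow\
(({^d}T)^{{^\vee}\theta})^{\hat{J}}/\bigl(({^d}T^{{^\vee}\theta})^{\hat{J}}\bigr)^{0}\,\langle {^\vee}(c_{\beta_{j}}\beta_{j})(-1):1\leq j\leq m\rangle.
\]
The induction step uses the fact, already established in the non-covering argument, that a real root $c_{\beta_{j}}\beta_{j}$ is orthogonal to each subsequent imaginary root $\beta_{j+1},\dots,\beta_{m}$, so that ${^\vee}(c_{\beta_{j}}\beta_{j})(-1)$ lies in each intermediate $(-{^\vee}\theta)$-fixed torus and the quotient on the right-hand side of Lemma \ref{toralquo3}(e) keeps growing by the next generator without disturbing the previous ones. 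This gives the top horizontal arrow of the diagram.

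With the top arrow in hand, the left and right vertical arrows are the obvious further quotients by the cosets of ${^\vee}\alpha(-1)$ for $\alpha \in c\,{^\vee}\Delta_{\phi}$ on the left and $\alpha \in R_{\mathbb{R}}(({^d}L)^{\hat{J}},({^d}T)^{\hat{J}})$ on the right. Here one needs to observe that $c\,{^\vee}\Delta_{\phi}$ already consists of real roots in $R({^\vee}G,\mathcal{T}_{1})$ and that these roots are not altered by the subsequent Cayley transforms $c_{\beta_{j}}$ (since they are real while the $\beta_{j}$ are imaginary and thus orthogonal to them), so the image of the top arrow carries the left subgroup into the right subgroup. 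The top square then commutes by construction.

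The two diagonal isomorphisms to $({^\vee}G_{\phi})^{\hat{J}}/(({^\vee}G_{\phi})^{\hat{J}})^{0}$ have already been identified in Sections \ref{abvlp} and \ref{klslp}: the left one is the $\hat{J}$-covering version of the isomorphism appearing in (\ref{JLpacket2}) via the relation $({^\vee}M_{1})_{\phi_{\nu}}^{\hat{J}}\cong{^\vee}G_{\phi}^{\hat{J}}$, and the right one is the isomorphism behind (\ref{compiso}) applied to ${^\vee}T^{\hat{J}}$. These identifications are canonical, so the two diagonal maps agree on the common image coming from ${^d}L$, and this forces the bottom horizontal arrow to be a well-defined isomorphism making the lower triangle commute. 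Combining the top square and the lower triangle yields the full commutative diagram.

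The step I expect to require the most care is the lifting of the dimension count in Lemma \ref{toralquo}(b) to the covering setting inside Lemma \ref{toralquo3}(c), since this is where the analogy with the non-covering proof is not completely mechanical: one has to verify that passing from $G$ to $G^{J}$ preserves the identifications $\bigl((\ker\beta_{|T^{\theta}})^{J}\bigr)^{0}=(((c_{\beta}\cdot T)^{\theta})^{J})^{0}$, which relies on the finite kernel $J$ being central so that dimensions and connected components match between $T^{\theta}$ and $(T^{\theta})^{J}$. Once Lemma \ref{toralquo3} is safely in place, the remainder of Proposition \ref{toralquo4} is a bookkeeping exercise of the kind already carried out in Proposition \ref{toralquo2}.
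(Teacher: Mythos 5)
Your proposal is correct and follows essentially the same route as the paper, which disposes of this proposition in one line by saying it ensues from Lemma \ref{toralquo3} exactly as Proposition \ref{toralquo2} ensues from Lemma \ref{toralquo} (via the iterated Corollary \ref{toralquo1}); your write-up simply makes that argument explicit, including the key points that the real roots ${^\vee}(c_{\beta_j}\beta_j)(-1)$ persist through later imaginary Cayley transforms and that $c\,{^\vee}\Delta_{\phi}$ is untouched by them. The only quibble is your phrasing of the left diagonal map: the relevant fact is that $(\mathcal{T}_1^{{^\vee}\theta})^{\hat J}/((\mathcal{T}_1^{{^\vee}\theta})^{\hat J})^{0}$ surjects onto $({^\vee}G_{\phi})^{\hat J}/(({^\vee}G_{\phi})^{\hat J})^{0}$ with kernel generated by the ${^\vee}\alpha(-1)$, $\alpha\in c\,{^\vee}\Delta_{\phi}$ (Kaletha's Proposition 5.9), not an isomorphism $({^\vee}M_1)_{\phi_\nu}^{\hat J}\cong{^\vee}G_{\phi}^{\hat J}$, but this does not affect the argument.
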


\subsection{A comparison of strong real forms of type $J$}
\label{compsrf}

In Sections \ref{abvlp} and \ref{klslp} the dual groups of (\ref{group1}) and
(\ref{group2}) are  placed in bijection with sets of strong real forms
of type $J$.  This requires a pairing between each of the two maximal
tori ${^d}T$ and $\mathcal{T}_{1}$.  In Section \ref{abvlp} we denoted
the torus paired with ${^d}T$ by $T \subset G$, and in Section
\ref{klslp} we denoted the torus paired with $\mathcal{T}_{1}$ by
$T_{1} \subset G$.  Equation (\ref{t1td}) relates ${^d}T$ to
$\mathcal{T}_{1}$ through Cayley transforms.  We shall establish a
parallel relationship between the paired tori $T$ and $T_{1}$.  Once
this is complete, the diagram dual to Proposition \ref{toralquo4}
together with (\ref{Jforms})  produce a map between strong real forms
of $T$ and $T_{1}$.

We continue by reviewing the manner in which $\mathcal{T}_{1}$ (or any other maximal torus of ${^\vee}G$) is paired with a maximal torus $T_{1} \subset G$.  This is presented in \cite{abv}*{Section 13-14}.   We use the constructions of Section \ref{klslp}.   Starting with the Cartan subgroup $\mathcal{T}_{1}^{\Gamma} \subset {^\vee}G^{\Gamma}$ (\emph{cf.} (\ref{phi1})), one chooses a positive system $R^{+}_{\mathbb{R}}({^\vee}G, \mathcal{T}_{1})$, for the real roots with respect to ${^\vee}\theta$, making $\lambda \in X_{*}(\mathcal{T})$ dominant.  We arbitrarily fix a positive system $R^{+}_{i \mathbb{R}}({^\vee}G, \mathcal{T}_{1})$ for the set of imaginary roots with respect to ${^\vee}\theta$.  The triple $(\mathcal{T}_{1}^{\Gamma},  R^{+}_{i \mathbb{R}}({^\vee}G, \mathcal{T}_{1}), R^{+}_{\mathbb{R}}({^\vee}G, \mathcal{T}_{1}) )$ determines a unique \emph{based Cartan subgroup} for the L-group ${^\vee}G{^\Gamma}$ (\cite{abv}*{Definition 13.7, Proposition 13.8}).   There is a pairing between this based Cartan subgroup and a \emph{based Cartan subgroup of} $(G^{\Gamma},\mathcal{W})$ (\cite{abv}*{ Proposition 13.10}).  The latter is a quadruple
\begin{equation}
  \label{bcs}
(T_{1}^{\Gamma}, \mathcal{W}(T_{1}^{\Gamma}),
  R_{i\mathbb{R}}^{+}(G,T_{1}), R_{\mathbb{R}}^{+}(G,T_{1}))
\end{equation}
in which $T_{1}^{\Gamma}$ is a Cartan subgroup of $G^{\Gamma}$
(\cite{abv}*{Definition 12.1}), $\mathcal{W}(T_{1}^{\Gamma})$ is a
$T_{1}$-conjugacy class of an element in $T_{1}^{\Gamma}-T_{1}$, and
the last two terms are positive systems for the imaginary and real
roots respectively, relative to any element in
$\mathcal{W}(T_{1}^{\Gamma})$.  This quadruple satisfies additional
properties (\cite{abv}*{Definition 13.5}), and is determined uniquely
up to conjugacy by $G$.  After possibly conjugating by an element of
$G$, we may assume that $\mathcal{W}(T_{1}^{\Gamma})$ is the
$T_{1}$-conjugacy class of the quasisplit strong real form
$\delta_{q}$ so that $T_{1}^{\Gamma} = T_{1} \rtimes  \langle
\delta_{q} \rangle$ (\ref{extG}).

The pairing between $\mathcal{T}_{1}$ and $T_{1}$ is an isomorphism
\begin{equation}
  \label{pair}
  \zeta: {^\vee}T_{1} \rightarrow \mathcal{T}_{1}
\end{equation}
which transports the Galois action on $T_{1}$ compatibly to the Galois action of $\mathcal{T}_{1}$, carries the roots of $T_{1}$ to the coroots of $\mathcal{T}_{1}$, carries imaginary roots $R_{i\mathbb{R}}^{+}(G,T_{1})$ to the positive real coroots of $\mathcal{T}_{1}$, and carries the real roots $R_{\mathbb{R}}^{+}(G,T_{1})$ to the positive imaginary coroots of $\mathcal{T}_{1}$ (\cite{abv}*{Definition 13.9}).

The pairing (\ref{pair}), together with the map (\ref{9.10map1}), yields a bijection between the strong real forms of $T_{1}$ of type $J$ and the group $\left((\mathcal{T}_{1}^{{^\vee}\theta})^{\hat{J}}/ 
((\mathcal{T}_{1}^{{^\vee}\theta})^{\hat{J}} )^{0}\right)^{\wedge}$ (\emph{cf.} (\ref{Jforms})).  By regarding the dual of (\ref{group2}) as subgroup of $\left( (\mathcal{T}_{1}^{{^\vee}\theta})^{\hat{J}}/ 
((\mathcal{T}_{1}^{{^\vee}\theta})^{\hat{J}} )^{0} \right)^{\wedge}$,
we obtain the strong real forms $\delta(\tau)$ of type $J$ appearing
in L-packet (\ref{klspacket}).
We wish to compare these strong real forms $\delta(\tau)$ with those
appearing in the L-packet  (\ref{JLpacket2}).

To facilitate this comparison, we wish  to express the maximal torus
$T$, which is paired with ${^d}T$, as an iterated Cayley transform of
$T_{1}$.   The first step in this direction is to construct a based Cartan subgroup from an iterated Cayley transform of $T_{1}$ parallel to (\ref{t1td}).  We shall achieve this one Cayley transform at a time.

Suppose $\beta \in R_{i\mathbb{R}}({^d}L, \mathcal{T}_{1})$ is
an imaginary noncompact root.  Then the pairing $\zeta$ sends $\beta$ to a
real coroot of $T_{1}$.  This coroot corresponds to a unique real
root in $R_{\mathbb{R}}(G,T_{1})$.  To keep the notation from getting
out of hand, we
also denote this real root by $\beta$.  The context should make it
clear whether $\beta \in R_{i\mathbb{R}}({^d}L, \mathcal{T}_{1})$ or
$\beta \in R_{\mathbb{R}}(G,T_{1})$.

The root $\beta \in R_{\mathbb{R}}(G,T_{1})$  is orthogonal to
$\lambda$ and satisfies a
\emph{parity condition} relative to the character of
$T_{1}(\mathbb{R}, \delta_{q})$ determined by (\ref{phi1}) (\cite{abv}*{Proposition 13.12}).  The parity
condition is the one appearing in the Hecht-Schmid character identity
theorem (\cite{abv}*{\emph{pp.} 129-130}).

Since $\beta \in R_{\mathbb{R}}(G,T_{1})$ is real, one may  apply a Cayley transform $d_{\beta} \in
G$ to obtain a maximal torus $d_{\beta} T_{1} d_{\beta}^{-1} \subset
G$ which is more compact than $T_{1}$  (\cite{beyond}*{(6.67)}).
Computations similar to those in Section \ref{klslp} reveal that
$d_{\beta} \delta_{q} d_{\beta}^{-1} \delta_{q}^{-1} = d_{\beta}^{2}$
is a representative for the simple reflection $w_{\beta}$ of $\beta$ in the Weyl group, and therefore that $\delta_{q}$ normalizes $d_{\beta} T_{1} d_{\beta}^{-1}$.  Thus, we may define 
\begin{equation}
  \label{extTone}
(d_{\beta} T_{1} d_{\beta}^{-1})^{\Gamma} = d_{\beta} T_{1}
  d_{\beta}^{-1} \rtimes \langle \delta_{q} \rangle.
\end{equation}
This is a Cartan subgroup of $G^{\Gamma}$ (\cite{abv}*{Definition 12.1}).  Let
$R^{+}_{i\mathbb{R}}(G, d_{\beta} T_{1} d_{\beta}^{-1})$ be any system of positive
imaginary roots containing $R^{+}_{i \mathbb{R}}(G,T_{1})$ and
$d_{\beta}\beta$.  Let $R^{+}_{\mathbb{R}}(G, d_{\beta}T_{1}
d_{\beta}^{-1})$ be any positive system of real roots.  Changes in
the choice of positive roots do not affect the constructions below, up
to equivalence (see proof of \cite{abv}*{Proposition 13.13}).

The parity condition and the Hecht-Schmid character identity theorem are
pertinent to the irreducible standard representations
$\pi(\eta_{\tau_{1}})$ (\ref{klsrep}) appearing  in the L-packet
(\ref{klspacket}).  The centralizer in $G(\mathbb{R}, \delta(\tau_{1}))$
of the split component of $(d_{\beta}T_{1} d_{\beta}^{-1})(\mathbb{R})$ yields
a real Levi subgroup $M_{\beta} \supsetneq M_{1}$ (\ref{maxsplit}).  The Hecht-Schmid
character identity (\cite{Speh-Vogan}*{\emph{pp.} 264-265}) and the irreducibility of $\pi(\eta_{\tau_{1}})$
provide an identity of the form 
\begin{equation}
  \label{hechtschmid}
\mathrm{ind}_{M_{1}(\mathbb{R},
  \delta(\tau_{1}))}^{M_{\beta}(\mathbb{R}, \delta(\tau_{1}))} \,
\pi^{M_{1}}(\eta_{\tau_{1}}) =  \pi^{M_{\beta}}(\eta_{\tau_{1}})
\end{equation}
The term on the left is the parabolically induced
character (equivalence class of a representation) of
$\pi^{M_{1}}(\eta_{\tau_{1}})$ (\emph{cf.} (\ref{M1packet1})).  The
term on the right is the character of an irreducible essential limit of
discrete series representation on $M_{\beta}(\mathbb{R}, \delta(\tau_{1}))$.  In addition, we have an equivalence
$$\pi(\eta_{\tau_{1}}) = \mathrm{ind}_{M_{\beta}(\mathbb{R},
  \delta(\tau_{1}))}^{G(\mathbb{R}, \delta(\tau_{1}))} \, \pi^{M_{\beta}}(\eta_{\tau_{1}})$$
of standard representations.  The difference in perspective is not entirely
apparent from our notation.  The Hecht-Schmid identity converts the
$\mathbb{R}$-embedding $\eta_{\tau_{1}}: T_{1} \rightarrow
G$ on the left into the identically denoted
$\mathbb{R}$-embedding $\eta_{\tau_{1}}: d_{\beta}T_{1}d_{\beta}^{-1} \rightarrow
G$ on the right (see \cite{knapp}*{Theorem 14.71}).  

\begin{lem}
  \label{newbcs}
  The quadruple
  $$\left((d_{\beta} T_{1} d_{\beta}^{-1})^{\Gamma},
  \mathrm{Int}(d_{\beta} T_{1}
d_{\beta}^{-1})(\delta_{q}), R^{+}_{i\mathbb{R}}(G, d_{\beta} T_{1} d_{\beta}^{-1}), R^{+}_{\mathbb{R}}(G, d_{\beta}T d_{\beta}^{-1}) \right)$$
is a based Cartan subgroup of $(G^{\Gamma},\mathcal{W})$.
\end{lem}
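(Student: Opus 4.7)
The plan is to verify the axioms of \cite{abv}*{Definition 13.5} for the proposed quadruple. Write $T' = d_{\beta} T_{1} d_{\beta}^{-1}$ for brevity.

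First, I would verify that $T'^{\Gamma}$ is a Cartan subgroup of $G^{\Gamma}$. This is essentially done in the discussion leading to (\ref{extTone}): since $d_{\beta}^{2} = w_{\beta}$ is a Weyl group element representing the simple reflection of $\beta$, conjugation by $d_{\beta}$ sends $\delta_{q}$ to an element that still normalizes $T'$. Hence the semidirect product in (\ref{extTone}) is well-defined, and $T'$ itself is a maximal torus of $G$; so $T'^{\Gamma}$ is a Cartan subgroup of $G^{\Gamma}$ in the sense of \cite{abv}*{Definition 12.1}.

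Second, I would check that $\mathrm{Int}(T')(\delta_{q})$ is a valid $T'$-conjugacy class in $T'^{\Gamma} - T'$, and that it is compatible with the Whittaker datum $\mathcal{W}$. The former is immediate from $\delta_{q} \in T'^{\Gamma} - T'$. The latter holds because the triple $(\delta_{q}, N, \chi) \in \mathcal{W}$ shows directly that $\delta_{q}$ is a $\mathcal{W}$-compatible quasisplit strong real form (with $\delta_{q}^{2} = 1 \in Z(G)$), so the $T'$-conjugacy class of $\delta_{q}$ inherits this compatibility.

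Third, I would verify the orientation and positivity conditions of \cite{abv}*{Definition 13.5} on the chosen root systems. Since $\sigma = \mathrm{Int}(\delta_{q})$ acts as $+1$ on imaginary coroots and $-1$ on real coroots of $T'$, the conditions relating $\sigma$ to the positive systems are automatically satisfied for any choice of $R^{+}_{i\mathbb{R}}(G, T')$ and $R^{+}_{\mathbb{R}}(G, T')$. In our specific choice, $d_{\beta}\beta \in R^{+}_{i\mathbb{R}}(G, T')$ and the positive system extends $R^{+}_{i\mathbb{R}}(G, T_{1})$ across the newly imaginary root, while $R^{+}_{\mathbb{R}}(G, T')$ is arbitrary.

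The main obstacle will be reconciling the apparent dependence on the choices of positive systems and on the specific Cayley transform representative $d_{\beta}$. Here I would appeal to the proof of \cite{abv}*{Proposition 13.13}, which shows that different positive system choices yield equivalent based Cartan subgroups. Moreover, since $d_{\beta} \in G$, conjugation by $d_{\beta}$ preserves the $G$-conjugacy class $\mathcal{W}$, ensuring that the resulting quadruple lies in the correct equivalence class of based Cartan subgroups of $(G^{\Gamma}, \mathcal{W})$.
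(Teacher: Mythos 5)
There is a genuine gap: you have omitted the one condition of \cite{abv}*{Definition 13.5} that actually carries the weight of the lemma, namely the genericity condition (condition (e)). A based Cartan subgroup of $(G^{\Gamma},\mathcal{W})$ is required to satisfy not only the structural conditions you list, but also that every standard representation of $G(\mathbb{R},\delta_{q})$ induced from a character of $(d_{\beta}T_{1}d_{\beta}^{-1})(\mathbb{R})$ dominant with respect to $R^{+}_{i\mathbb{R}}(G, d_{\beta}T_{1}d_{\beta}^{-1})$ admits a Whittaker model with respect to $\mathcal{W}$. This is emphatically not one of the ``orientation and positivity conditions'' that you declare automatically satisfied; it is a nontrivial representation-theoretic statement tied to the choice of positive imaginary system, and it is the reason the lemma sits where it does in the paper, after the discussion of the parity condition and the Hecht--Schmid identity. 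The paper's proof reduces condition (e), via the discussion on \cite{abv}*{\emph{pp.} 161-162} and \cite{abv}*{Lemma 14.11}, to showing that the limit of discrete series representation $\pi^{M_{\beta}}(\eta_{\mathcal{W}})$ of the larger Levi $M_{\beta}$ is generic for the inherited Whittaker datum $\mathcal{W}_{M_{\beta}}$; this in turn follows from the genericity of $\pi^{M_{1}}(\eta_{\mathcal{W}})$ (known because (\ref{bcs}) is a based Cartan subgroup) together with the Hecht--Schmid character identity (\ref{hechtschmid}), which exhibits $\pi^{M_{\beta}}(\eta_{\mathcal{W}})$ as the parabolic induction of $\pi^{M_{1}}(\eta_{\mathcal{W}})$, and the fact that induction preserves Whittaker models. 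Without this step your argument does not establish that the quadruple is a based Cartan subgroup \emph{for the pair} $(G^{\Gamma},\mathcal{W})$, only that it has the right formal shape.

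Your treatment of the remaining conditions is essentially correct and matches the paper: condition (c) reduces to the observation that $\delta_{q}$ itself occurs as the first entry of a triple in $\mathcal{W}$ (which is (\ref{whittdatum})), and the well-definedness of the semidirect product (\ref{extTone}) is as you describe. The remarks about independence of the choice of positive systems, citing \cite{abv}*{Proposition 13.13}, are also consistent with the paper, though they address equivalence of the resulting constructions rather than the verification of condition (e) itself.
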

\begin{proof}
  We are required to verify conditions (a)-(e) of
  \cite{abv}*{Definition 13.5}.  The only conditions which are not
  obviously satisfied are (c) and (e).  Condition (c) states that
  every element in the $d_{\beta} T_{1}
d_{\beta}^{-1}$-conjugacy class of $\delta_{q}$ (\ref{extTone}) appears as the first entry in some triple
in $\mathcal{W}$ as given in Section \ref{strongrigid}.  It suffices to show that $\delta_{q}$ appears as the first entry of a triple in $\mathcal{W}$, and this is true by
(\ref{whittdatum}).

Condition  (e) states that every standard representation of $G(\mathbb{R},
\delta_{q})$ induced from a
character on $d_{\beta}T_{1} d_{\beta}^{-1}(\mathbb{R})$, dominant with
respect to $R^{+}_{i\mathbb{R}}(G, d_{\beta} T_{1} d_{\beta}^{-1})$,
has a Whittaker model with respect to $\mathcal{W}$. The discussion on
\cite{abv}*{\emph{pp.} 161-162} reduces this condition to proving that
the limit of discrete series representation
$\pi^{M_{\beta}}(\eta_{\mathcal{W}})$  has a Whittaker model with
respect to the Whittaker datum $\mathcal{W}_{M_{\beta}}$ inherited
from $\mathcal{W}$  ((\ref{Wemb}), \cite{abv}*{Lemma 14.11}). Since (\ref{bcs})
is a based Cartan subgroup for $(G,\mathcal{W})$ the same discussion
entails that $\pi^{M_{1}}(\eta_{\mathcal{W}})$ has a Whittaker model
with respect to $\mathcal{W}_{M_{1}}$.  By (\ref{hechtschmid}) we have
$\pi^{M_{\beta}}(\eta_{\mathcal{W}}) = \mathrm{ind}_{M_{1}(\mathbb{R},
  \delta(\tau))}^{M_{\beta}(\mathbb{R}, \delta(\tau))} \,
\pi^{M_{1}}(\eta_{\mathcal{W}})$, and so (\cite{abv}*{Lemma 14.11})  implies
that $\pi^{M_{\beta}}(\eta_{\mathcal{W}})$ has a Whittaker model with
respect to $\mathcal{W}_{M_{\beta}}$.
\end{proof}

We would now like to define a pairing between the based Cartan subgroup
in Lemma \ref{newbcs} and a based Cartan subgroup containing
$c_{\beta} \mathcal{T}_{1} c_{\beta}^{-1}$.  For this, we
use the pairing $\zeta$ (\ref{pair}) between the based Cartan subgroups
containing $T_{1}$ and $\mathcal{T}_{1}$. The isomorphism $\zeta$ may
be regarded as a pair of  isomorphisms, $X^{*}(T_{1}) \cong
X_{*}(\mathcal{T})$ and $X_{*}(T_{1}) \cong X^{*}(\mathcal{T}_{1})$.
Regarded in this manner we define
\begin{equation}
  \label{zetabeta}
\zeta_{\beta}: d_{\beta} T
d_{\beta}^{-1} \stackrel{\cong}{\rightarrow} c_{\beta}
\mathcal{T}_{1} c_{\beta}^{-1}
\end{equation}
through the commutative diagrams
$$\xymatrix{X^{*}(T_{1}) \ar[r]^{\zeta} & X_{*}(\mathcal{T}_{1})
  \ar[d]^{\mathrm{Int}(c_{\beta})} \\
X^{*}(d_{\beta} T_{1} d_{\beta}^{-1}) \ar[u]^{\mathrm{Int}(d_{\beta})} \ar[r]_{\zeta_{\beta}}&
X_{*}(c_{\beta} \mathcal{T}_{1} c_{\beta}^{-1})} \quad
\xymatrix{X_{*}(T_{1}) \ar[r]^{\zeta} & X^{*}(\mathcal{T}_{1})
  \ar[d]^{\mathrm{Int}(c_{\beta})} \\
X_{*}(d_{\beta} T_{1} d_{\beta}^{-1}) \ar[u]^{\mathrm{Int}(d_{\beta})} \ar[r]_{\zeta_{\beta}}&
X^{*}(c_{\beta} \mathcal{T}_{1} c_{\beta}^{-1})}$$
Recall that $\zeta$ carries the Galois action on $T_{1}$ compatibly to
the Galois action on $\mathcal{T}_{1}$.  To be more precise,
\cite{abv}*{Proposition 2.12} converts the antiholomorphic Galois
action of conjugation by $\delta_{q}$ on $T_{1}$ into a holomorphic action
$a_{T_{1}} \in \mathrm{Aut}(T_{1})$.  In the present case,
$$a_{T_{1}} (\lambda_{1}) = \overline{\lambda_{1} \circ
  \delta_{q}^{-1}}, \quad \lambda_{1} \in X^{*}(T_{1}).$$
Conjugation by
${^\vee}\delta_{q}$ on $\mathcal{T}_{1}$ is already holomorphic and is
denoted by $a_{\mathcal{T}_{1}} \in \mathrm{Aut}(\mathcal{T}_{1})$.
The compatibility condition is
$$\zeta \circ a_{T_{1}} \circ \zeta^{-1} = a_{\mathcal{T}_{1}}.$$
An implicit consequence of this compatibility condition is that $\zeta$
carries real roots in $R(G,T_{1})$ to imaginary coroots of $({^\vee}G, \mathcal{T}_{1})$, and carries imaginary roots in
$R(G,T_{1})$ to real coroots of $({^\vee}G, \mathcal{T}_{1})$.

Let $R^{+}_{i\mathbb{R}}({^\vee}G, c_{\beta}\mathcal{T}_{1} c_{\beta}^{-1})$ be the
set of roots corresponding to the imaginary coroots
$\zeta_{\beta}(R^{+}_{\mathbb{R}}(G, d_{\beta} T_{1}
d_{\beta}^{-1}))$.  Let $R^{+}_{\mathbb{R}}({^\vee}G,
c_{\beta}\mathcal{T}_{1} c_{\beta}^{-1})$ be the set of roots
corresponding to the real coroots $ \zeta_{\beta} (R^{+}_{i \mathbb{R}}(G, d_{\beta} T_{1}  d_{\beta}^{-1}))$
\begin{lem}
  \label{newpair}
  The isomorphism $\zeta_{\beta}$ carries the Galois action of
  $(d_{\beta} T_{1}d_{\beta}^{-1})^{\Gamma}$ compatibly to the Galois action on
  $(c_{\beta} \mathcal{T}_{1} c_{\beta}^{-1})^{\Gamma}$.  Moreover,
  this compatibility extends to a pairing (\cite{abv}*{Definition 13.9}) between the based
  Cartan subgroup of Lemma \ref{newbcs} and the based Cartan subgroup
  of ${^\vee}G$ determined by 
$$\left( (c_{\beta} \mathcal{T}_{1}
  c_{\beta}^{-1})^{\Gamma}, R^{+}_{i\mathbb{R}}({^\vee}G,
  c_{\beta}\mathcal{T}_{1} c_{\beta}^{-1}),  R^{+}_{\mathbb{R}}({^\vee}G,
c_{\beta}\mathcal{T}_{1} c_{\beta}^{-1}) \right).$$
\end{lem}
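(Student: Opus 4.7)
The plan is to exploit the fact that $\zeta$ is known to be a pairing between the based Cartan subgroups containing $T_1$ and $\mathcal{T}_1$, and to transport this pairing across the Cayley transforms $d_\beta$ and $c_\beta$ using the defining diagrams (\ref{zetabeta}) for $\zeta_\beta$. All conditions for a pairing in the sense of \cite{abv}*{Definition 13.9} must be verified, but each one reduces, after unwinding the conjugations by $d_\beta$ and $c_\beta$, to a known property of $\zeta$ combined with matching formulas describing how the two Cayley transforms twist the Galois actions on the respective tori.

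The first and main step is the Galois compatibility $\zeta_\beta \circ a_{d_\beta T_1 d_\beta^{-1}} \circ \zeta_\beta^{-1} = a_{c_\beta \mathcal{T}_1 c_\beta^{-1}}$. Translating via the defining diagrams (\ref{zetabeta}) reduces this to a comparison of the actions pulled back to $T_1$ and $\mathcal{T}_1$ via $\mathrm{Int}(d_\beta)^{-1}$ and $\mathrm{Int}(c_\beta)^{-1}$. On the group side, the computation noted just after (\ref{extTone}) gives $d_\beta \delta_q d_\beta^{-1} \delta_q^{-1} = d_\beta^2$, a representative of the reflection $w_\beta$, so the transported action on $T_1$ is $w_\beta \cdot a_{T_1}$. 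Dually, the identity ${^\vee}\theta(c_\beta) = c_\beta^{-1}$ coming from the imaginary noncompactness of $\beta$ yields $c_\beta \cdot {^\vee}\delta_q \cdot c_\beta^{-1} \cdot {^\vee}\delta_q^{-1} = c_\beta^2$, a representative of the coroot reflection $w_\beta^\vee$, so the transported action on $\mathcal{T}_1$ is $w_\beta^\vee \cdot a_{\mathcal{T}_1}$. Since $\zeta$ carries the root $\beta \in R(G,T_1)$ to the coroot $\zeta(\beta)$ in the root datum of $({^\vee}G,\mathcal{T}_1)$, it intertwines $w_\beta$ with $w_\beta^\vee$; combined with $\zeta \circ a_{T_1} \circ \zeta^{-1} = a_{\mathcal{T}_1}$, the two twisted actions then match under $\zeta$, which proves the compatibility.

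For the remaining pairing conditions, $\zeta_\beta$ carries roots to coroots because $\zeta$ does and both sides are twisted in a compatible way by conjugation. The Cayley transforms $d_\beta$ and $c_\beta$ swap the imaginary/real label of the single root $\beta$ in parallel (as recorded in Section \ref{klslp}) and leave the labels of the other roots unchanged, so $\zeta_\beta$ continues to send imaginary roots of $(G, d_\beta T_1 d_\beta^{-1})$ to real coroots of $({^\vee}G, c_\beta \mathcal{T}_1 c_\beta^{-1})$, and vice versa. The positive systems $R^+_{i\mathbb{R}}({^\vee}G, c_\beta \mathcal{T}_1 c_\beta^{-1})$ and $R^+_{\mathbb{R}}({^\vee}G, c_\beta \mathcal{T}_1 c_\beta^{-1})$ were defined as the images under $\zeta_\beta$ of the chosen positive real and imaginary systems for $(G, d_\beta T_1 d_\beta^{-1})$, so the positivity compatibility is built into the construction. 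The main obstacle, in my view, is bookkeeping: the elements $d_\beta^2$ and $c_\beta^2$ represent the respective reflections only up to torus elements, and one must confirm that this ambiguity is absorbed into $T_1$ and $\mathcal{T}_1$ so that it does not affect the induced automorphisms on the tori themselves. Once that is cleaned up, the remainder is formal.
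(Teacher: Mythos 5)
Your proposal tracks the paper's own proof closely: both transport the Galois action across $d_\beta$ (resp. $c_\beta$), observe that a $w_\beta$-twist appears on the $T_1$ side and a dual twist on the $\mathcal{T}_1$ side, and then combine $\zeta\circ a_{T_1}\circ\zeta^{-1}=a_{\mathcal{T}_1}$ with the fact that $\zeta$ intertwines the two reflections (a fact the paper uses implicitly in writing $\mathrm{Int}(w_\beta)$ on both sides) to conclude the compatibility; the extension to a full pairing is treated in the same way, by noting that the positive systems on the $c_\beta\mathcal{T}_1 c_\beta^{-1}$ side were defined to be the $\zeta_\beta$-images of those on the $d_\beta T_1 d_\beta^{-1}$ side. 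Your closing worry about representatives of $w_\beta$ differing by torus elements is not an obstruction --- the induced automorphism of the maximal torus is independent of the representative of the Weyl element in its normalizer --- and the paper accordingly does not address it.
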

\begin{proof}
  We compute for any $\lambda_{1} \in X^{*}(T_{1})$ that
\begin{align*}
(a_{d_{\beta}T_{1} d_{\beta}^{-1}} (d_{\beta}\cdot \lambda_{1})) \circ
  \mathrm{Int}(d_{\beta})
  &= \overline{\lambda_{1} \circ \mathrm{Int}(d_{\beta}^{-1}) \circ
    \delta_{q}^{-1} \circ \mathrm{Int}(d_{\beta})}\\
  &= \overline{\lambda_{1} \circ   \mathrm{Int}(w_{\beta}) \circ
    \delta_{q}^{-1}}\\
  &= \overline{(w_{\beta}\cdot \lambda_{1}) \circ \delta_{q}^{-1}}\\
  &= a_{T_{1}}(w_{\beta} \cdot \lambda_{1}).
\end{align*}
A similar, and slightly easier, computation shows that
$$\mathrm{Int}(c_{\beta}^{-1}) \circ a_{c_{\beta}
  \mathcal{T}_{1} c_{\beta}^{-1}} \circ
\mathrm{Int}(c_{\beta}) = a_{\mathcal{T}_{1}} \circ \mathrm{Int}(w_{\beta})$$
It follows from these two computations and the definition of
$\zeta_{\beta}$ that 
\begin{align*}
\zeta_{\beta} \circ a_{d_{\beta}T_{1}d_{\beta}^{-1}} \circ
\zeta_{\beta}^{-1} &= \mathrm{Int}(c_{\beta}) \circ \zeta \circ
a_{T_{1}} \circ \mathrm{Int}(w_{\beta}) \circ \zeta^{-1} \circ
\mathrm{Int}(c_{\beta}^{-1})\\
&= \mathrm{Int}(c_{\beta})  \circ
a_{\mathcal{T}_{1}} \circ \mathrm{Int}(w_{\beta}) \circ
\mathrm{Int}(c_{\beta}^{-1})\\
&= a_{c_{\beta}
  \mathcal{T}_{1} c_{\beta}^{-1}}.
\end{align*}
This proves the compatibility of the Galois actions.  The extension of
the compatibility to a pairing  follows from the choice of positive
systems in the based Cartan subgroup containing $(c_{\beta}
\mathcal{T}_{1} c_{\beta}^{-1})^{\Gamma}$ (see the proof of
\cite{abv}*{Proposition 13.10 (a)}).
\end{proof}

Lemma \ref{newpair} may be applied repeatedly in the context of
(\ref{t1td}). If $c'$ in (\ref{t1td}) is the iterated composition of
imaginary noncompact roots $\beta_{1}, \ldots,
\beta_{m}$ then the lemma tells us that there is a pairing
between $d_{\beta_{m}} \cdots d_{\beta_{1}} T_{1} (d_{\beta_{m}}
\cdots d_{\beta_{1}})^{-1}$ and $c'\mathcal{T}_{1} (c')^{-1} = {^d}T$.  Let us denote $d_{\beta_{m}} \cdots d_{\beta_{1}} T_{1} (d_{\beta_{m}}
\cdots d_{\beta_{1}})^{-1}$ by $d'\cdot T_{1}$, so that $d'\cdot
T_{1}$ is paired with ${^d}T$.  This pairing and (\ref{Jforms}) allows
us to identify the strong real forms of $d'\cdot T_{1}$ of type $J$ with $\left( ({^d}T^{{^\vee}\theta})^{\hat{J}}/ 
(({^d}T^{{^\vee}\theta})^{\hat{J}} )^{0}\right)^{\wedge}$.  Similarly the pairing between $T_{1}$ and $\mathcal{T}_{1}$
allows us to identify the strong real forms of $T_{1}$ of type $J$
with $\left( (\mathcal{T}_{1}^{{^\vee}\theta})^{\hat{J}}/ 
((\mathcal{T}_{1}^{{^\vee}\theta})^{\hat{J}} )^{0} \right)^{\wedge}$.  We wish to make these identifications
in conjunction with the diagram of Proposition \ref{toralquo4}.

Given a finite abelian group $A$, we denote $\mathrm{Hom}(A,
\mathbb{C}^{\times})$ by $A^{\wedge}$.  The next corollary is an
application of $\mathrm{Hom}(\cdot,
\mathbb{C}^{\times})$ to Proposition \ref{toralquo4} and is a special
instance of Pontryagin duality.
\begin{cor}
 \label{dualdiag}
  The following diagram, given by restriction from the diagram in
  Proposition \ref{toralquo4}, is commutative
 $$\xymatrix{
\left( \frac{ (\mathcal{T}_{1}^{{^\vee}\theta})^{\hat{J}} }{
  ((\mathcal{T}_{1}^{{^\vee}\theta})^{\hat{J}} )^{0}  } \right)^{\wedge}
 & & \ar@{->>}[ll] 
\left( \frac{{(^d}T^{{^\vee}\theta})^{\hat{J} }}{
  (({^d}T^{{^\vee}\theta})^{\hat{J}} )^{0}   \langle
  {^\vee}(c_{\beta_{j}}\beta_{j})(-1) : 1 \leq j \leq m
  \rangle } \right)^{\wedge}   \\
\left( \frac{ (\mathcal{T}_{1}^{{^\vee}\theta})^{\hat{J}}  }{ (( \mathcal{T}_{1}^{{^\vee}\theta})^{\hat{J}} )^{0}
   \  \langle {^\vee}\alpha(-1) : \, \alpha \in c\,
      {^\vee}\Delta_{\phi} \rangle} \right)^{\wedge}
\ar@{<-}[rr]^{\cong}  \ar[dr]_{\cong}  \ar@{^{(}->}[u]
& & 
\left( \frac{({^d}T^{{^\vee}\theta})^{\hat{J}} }{
  (({^d}T^{{^\vee}\theta})^{\hat{J}} )^{0}  \ \langle
  {^\vee}\alpha(-1) : \,\alpha \in R_{\mathbb{R}}(({^d}L)^{\hat{J}} ,
  ({^d}T)^{\hat{J}} ) \rangle} \right)^{\wedge}  \ar[dl]^{\cong}   \ar@{^{(}->}[u]\\ 
& \left( ({^\vee}G_{\phi})^{\hat{J}}  / (({^\vee}G_{\phi})^{\hat{J}})
^{0} \right)^{\wedge} &  }$$
\end{cor}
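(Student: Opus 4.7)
The plan is to apply the contravariant exact functor $\mathrm{Hom}(-,\mathbb{C}^{\times})$ (Pontryagin duality on finite abelian groups) term-by-term and arrow-by-arrow to the diagram of Proposition \ref{toralquo4}. Once it is checked that every object in sight is a finite abelian group, the corollary is a purely formal consequence.

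First I would verify finiteness and abelianness. The group ${^\vee}G_{\phi}/({^\vee}G_{\phi})^{0}$ is the component group of the centralizer of $\phi(W_{\mathbb{R}})$ in the reductive group ${^\vee}G$, so it is finite, and it is abelian by \cite{abv}*{\emph{p.} 61}. Since the covering $({^\vee}G_{\phi})^{\hat{J}}\to {^\vee}G_{\phi}$ has finite kernel $\hat{J}$, the quotient $({^\vee}G_{\phi})^{\hat{J}}/(({^\vee}G_{\phi})^{\hat{J}})^{0}$ is also a finite abelian group. The tori $\mathcal{T}_{1}$ and ${^d}T$ are $\mathcal{T}$ up to conjugation by Cayley transforms, so their ${^\vee}\theta$-fixed subgroups and the $\hat{J}$-coverings thereof likewise have finite abelian component groups. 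Finally, quotienting further by the finite subgroup generated by the cosets of the elements ${^\vee}\alpha(-1)$ (for $\alpha$ ranging over a finite root system) preserves both finiteness and abelianness.

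Next I would invoke that on the category of finite abelian groups, $\mathrm{Hom}(-,\mathbb{C}^{\times})$ is an exact contravariant autoequivalence: it reverses every arrow, sends monomorphisms to surjections, surjections to monomorphisms, and isomorphisms to isomorphisms, and carries a commutative diagram to a commutative diagram. Applying this to Proposition \ref{toralquo4}, the top monomorphism dualizes to the top right-to-left surjection of Corollary \ref{dualdiag}; the two downward surjections dualize to the two upward (hooked) monomorphisms; the lower horizontal isomorphism dualizes to an isomorphism in the reverse direction; and the two diagonal isomorphisms identifying both lower quotients with ${^\vee}G_{\phi}^{\hat{J}}/({^\vee}G_{\phi}^{\hat{J}})^{0}$ dualize to the two diagonal isomorphisms from $\bigl(({^\vee}G_{\phi})^{\hat{J}}/(({^\vee}G_{\phi})^{\hat{J}})^{0}\bigr)^{\wedge}$ displayed in the corollary. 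Commutativity of the dualized diagram is automatic from functoriality.

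There is no genuine obstacle here; the only content is the finiteness check, and even that is a repackaging of facts already used in Sections \ref{abvlp} and \ref{klslp}. The corollary is recorded in this dual form purely because the pairings (\ref{Jforms}) identify each of the dualized component groups on the left-hand side of the diagram with sets of equivalence classes of strong real forms of type $J$, which is exactly the form needed for the comparison carried out in the remainder of Section \ref{compsrf}.
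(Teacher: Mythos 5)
Your proposal is correct and is precisely the paper's approach: the text preceding the corollary explicitly introduces $A^{\wedge} = \mathrm{Hom}(A,\mathbb{C}^{\times})$ for finite abelian $A$ and states that the corollary is ``an application of $\mathrm{Hom}(\cdot,\mathbb{C}^{\times})$ to Proposition~\ref{toralquo4} and is a special instance of Pontryagin duality.'' Your added verification of finiteness and abelianness, and the explicit remark that the contravariant exact functor swaps monomorphisms with surjections and preserves commutativity, merely spells out details the paper leaves implicit.
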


\begin{cor}
\label{samedelta}
  Suppose $\tau_{d}$ is a character of $ \frac{({^d}T^{{^\vee}\theta})^{\hat{J}} }{
  (({^d}T^{{^\vee}\theta})^{\hat{J}} )^{0}  \ \langle
  {^\vee}\alpha(-1) : \,\alpha \in R_{\mathbb{R}}(({^d}L)^{\hat{J}} ,
  ({^d}T)^{\hat{J}} ) \rangle}$ which corresponds to the unique
character $\tau_{1}$ of $\frac{ (\mathcal{T}_{1}^{{^\vee}\theta})^{\hat{J}}  }{ (( \mathcal{T}_{1}^{{^\vee}\theta})^{\hat{J}} )^{0}
   \  \langle {^\vee}\alpha(-1) : \, \alpha \in c\,
      {^\vee}\Delta_{\phi} \rangle}$
as in Corollary \ref{dualdiag}.  Then the strong real form
$\delta(\tau_{d}) \in G^{\Gamma}- G$ is equivalent to
the strong real form $\delta(\tau_{1}) \in G^{\Gamma}- G$.
\end{cor}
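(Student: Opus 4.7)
The plan is to dualize Corollary \ref{dualdiag} via Pontryagin duality and Tate--Nakayama (\ref{Jforms}), carry the resulting cocharacter correspondence to the primal side through the pairings $\zeta$ of (\ref{pair}) and $\zeta_{\beta}$ of (\ref{zetabeta}), and then realize the equivalence explicitly by conjugation by the iterated Cayley transform $d' \in G$. First I would apply $\mathrm{Hom}(-, \mathbb{C}^{\times})$ to Corollary \ref{dualdiag} and combine with (\ref{Jforms}) to express $\tau_{1}$ and $\tau_{d}$ as cocharacter classes $\lambda_{1} \in X_{*}(\bar{T}_{1})^{-\sigma}/(1-\sigma)X_{*}(T_{1})$ and $\lambda_{d} \in X_{*}(\bar{T})^{-\sigma}/(1-\sigma)X_{*}(T)$, where $T = d' T_{1} (d')^{-1}$. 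The defining diagrams (\ref{zetabeta}) show that $\zeta_{\beta}$ intertwines $\mathrm{Int}(d_{\beta})$ on the primal side with $\mathrm{Int}(c_{\beta})$ on the dual side, so by iteration the top horizontal arrow of Proposition \ref{toralquo4} is induced on cocharacters by $\mathrm{Int}(d') \colon X_{*}(T_{1}) \to X_{*}(T)$. Consequently, the correspondence $\tau_{1} \leftrightarrow \tau_{d}$ reads as the congruence $\lambda_{d} \equiv \mathrm{Int}(d')\lambda_{1}$ modulo $(1-\sigma)X_{*}(T)$ together with the sublattice which is Tate--Nakayama dual to $\langle {}^{\vee}\alpha(-1) : \alpha \in R_{\mathbb{R}}({^d}L, {^d}T) \rangle$.

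By (\ref{9.10map1}) applied to $T_{1}$ and to $T$, the two strong real forms are $\delta(\tau_{1}) = \exp(\uppi i \lambda_{1})\delta_{q}$ and $\delta(\tau_{d}) = \exp(\uppi i \lambda_{d})\delta_{q}$. I would then compute $d' \delta(\tau_{1}) (d')^{-1}$ directly. Each $\beta_{j}$ is a real root with respect to $\delta_{q}$ on the appropriate intermediate torus, so the root vectors in (\ref{cayley}) may be normalized to satisfy $\delta_{q}(X_{\beta_{j}} + X_{-\beta_{j}}) = X_{\beta_{j}} + X_{-\beta_{j}}$; this yields $\delta_{q}(d_{\beta_{j}}) = d_{\beta_{j}}^{-1}$ and hence $d_{\beta_{j}}\delta_{q} d_{\beta_{j}}^{-1} = d_{\beta_{j}}^{2}\delta_{q} = w_{\beta_{j}}\delta_{q}$. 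Iterating across all $\beta_{j}$ produces
\begin{equation*}
d' \, \delta(\tau_{1}) \, (d')^{-1} = \exp\bigl(\uppi i \, \mathrm{Int}(d')\lambda_{1}\bigr) \cdot w' \delta_{q},
\end{equation*}
where $w' \in N_{G}(T_{1})$ is the corresponding product of the simple reflection representatives $w_{\beta_{j}}$.

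The main obstacle is the final absorption step: to conclude, I need to show that the Weyl correction $w' \delta_{q}$ can be $T$-conjugated to an element of the form $\exp(\uppi i \mu)\delta_{q}$ with $\mu$ lying in the sublattice identified in the first paragraph. Once this is in hand, the congruence from that paragraph identifies $\mathrm{Int}(d')\lambda_{1} + \mu$ with $\lambda_{d}$ modulo precisely those relations which by (\ref{9.10map1}) are the $G$-equivalence relations on strong real forms of $T^{\Gamma}$; hence $d' \delta(\tau_{1}) (d')^{-1}$ is $G$-equivalent to $\delta(\tau_{d})$, proving the corollary. The absorption is possible because $\tau_{d}$ is by construction trivial on each ${}^{\vee}\alpha(-1)$ for $\alpha \in R_{\mathbb{R}}({^d}L, {^d}T)$: on the primal side this says that the strong real forms arising from $\exp(\uppi i \, {}^{\vee}\alpha)\delta_{q}$ for such $\alpha$ are $G$-equivalent to $\delta_{q}$, which is the cocycle manifestation of the Hecht--Schmid parity condition underlying (\ref{hechtschmid}), and is exactly what reconciles the two parameterizations of Sections \ref{abvlp} and \ref{klslp}.
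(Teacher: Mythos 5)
You have the right general frame (dualize Corollary~\ref{dualdiag}, descend to cocharacter representatives, compare the resulting $\exp(\uppi i\,\cdot)\,\delta_q$), but you miss the one observation that makes the whole comparison collapse: $\lambda_1$ is \emph{orthogonal} to the real root $\beta$. Since $\beta \in R(G,T_1)$ is real and $\lambda_1$ represents a class in $X_*(T_1)^{-\sigma}/(1-\sigma)X_*(T_1)$ (so $\sigma\circ\lambda_1 = -\lambda_1$), one has
$$\langle \beta, \lambda_1 \rangle = \langle \beta\circ\sigma, \sigma\circ\lambda_1 \rangle = -\langle\beta,\lambda_1\rangle,$$
hence $\langle\beta,\lambda_1\rangle = 0$ and therefore $\mathrm{Int}(d_\beta)\circ\lambda_1 = \lambda_1$. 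Combined with the fact that the elements $t_j$ on which $\tau_d$ is evaluated lie in $\ker\beta$ (so that $\tau_d = \tau_d\circ\mathrm{Int}(c_\beta)$ and $\zeta$ sends $\tau_1 = \tau_d\circ\mathrm{Int}(c_\beta)$ to the same $\lambda_1$), the two representatives $\exp(\uppi i\,\mathrm{Int}(d_\beta)\lambda_1)\,\delta_q$ and $\exp(\uppi i\,\lambda_1)\,\delta_q$ are \emph{literally equal} — no $G$-conjugation is needed at all.

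Because you don't see this orthogonality, you instead conjugate $\delta(\tau_1)$ by $d'$, which creates the Weyl correction $w'\delta_q$ that you then cannot dispose of. Your last paragraph concedes this (``The main obstacle is the final absorption step'') and the proposed remedy — that triviality of $\tau_d$ on the ${}^\vee\alpha(-1)$ is ``the cocycle manifestation of the Hecht--Schmid parity condition'' which makes the absorption work — is a hope, not an argument; nothing in it produces a specific $T$-conjugation sending $w'\delta_q$ to an element $\exp(\uppi i\mu)\delta_q$ with $\mu$ in the required sublattice. In fact the correct picture is simpler: because $\mathrm{Int}(d_\beta)\lambda_1 = \lambda_1$, the cocharacter $\lambda_d$ that represents $\tau_d$ is not merely congruent to $\mathrm{Int}(d')\lambda_1$ modulo a sublattice, it \emph{equals} $\lambda_1$, so there is no Weyl factor to absorb. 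You should also note that the paper first treats the case $J = \{1\}$, $c' = c_\beta$, then passes to general $J$ via Lemma~\ref{toralquo3} and to general $c'$ by induction; your attempt to do the full iterated case in one step is part of what forces you into the unmanageable $w'\delta_q$ bookkeeping.
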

\begin{proof}
First assume $J = \{1\}$ and $c' = c_{\beta}$ for a single imaginary
noncompact root $\beta \in R({^d}L, {^d}T)$ (\ref{t1td}).  This places
us in the setting of Lemma \ref{newpair}, that is
$d'\cdot T_{1}$ is paired with ${^d}T = c_{\beta}\cdot \mathcal{T}_{1}$
through $\zeta_{\beta}$ (\ref{zetabeta}), and $d' = d_{\beta}$  for
real  $\beta \in R(G,T_{1})$.   We regard $\tau_{d}$ as a character of 
$\frac{{^d}T^{{^\vee}\theta}}{ ({^d}T^{{^\vee}\theta})^{0} \, 
\langle {^\vee}(c_{\beta}\beta)(-1) \rangle}$ which is trivial on
${^\vee}\alpha(-1)$ for all $\alpha \in R_{\mathbb{R}}({^d}L ,
{^d}T )$.  According to Lemma \ref{toralquo} and Proposition \ref{toralquo2}, the character $\tau_{d}$
is  determined by its values on a set
$$\left \{  t_{1} (\mathcal{T}_{1}^{{^\vee}\theta})^{0}, \ldots,
t_{\ell} (\mathcal{T}_{1}^{{^\vee}\theta})^{0} \right\} \subset \mathcal{T}_{1}^{{^\vee}\theta}/ (\mathcal{T}_{1}^{{^\vee}\theta})^{0}$$
in which $t_{1}, \ldots , t_{\ell} \in \ker \beta$.  In addition, the character
$\tau_{1}$ is determined by the equations
\begin{equation}
\label{defeq}
  \tau_{1} (t_{j} (\mathcal{T}_{1}^{{^\vee}\theta})^{0}) = \tau_{d}
(t_{j} (\mathcal{T}_{1}^{{^\vee}\theta})^{0}), \quad 1 \leq j \leq
  \ell.
\end{equation}

The strong real form $\delta(\tau_{d})$ is defined through the maps in
(\ref{pureforms}) and the pairing $\zeta_{\beta}$.  To be more
precise, the second map in (\ref{pureforms}) is defined by
identifying $\tau_{d}$ with a character in $X^{*}({^d}T) =
X^{*}(c_{\beta}\cdot \mathcal{T}_{1})$ and sending
this character to  an element in $X_{*}(d_{\beta}\cdot T_{1})$ under
$\zeta_{\beta}$ (\ref{zetabeta}).  To be even more precise, the character $\tau_{d}$ is
sent first to $\tau_{d} \circ \mathrm{Int}(c_{\beta}) \in
X^{*}(\mathcal{T}_{1})$, then to $\lambda_{1} \in X_{*}(T_{1})$ under
$\zeta$, and finally to $\mathrm{Int}(d_{\beta}) \circ \lambda_{1} \in
X_{*}(d_{\beta}\cdot T_{1})$.  The strong real form $\delta(\tau_{d})$
is represented by the element
\begin{equation}
  \label{rep1}
\exp(\uppi i \, \mathrm{Int}(d_{\beta})
\circ \lambda_{1}) \, \delta_{q} \in G^{\Gamma}.
\end{equation}

Let us retrace some of these steps in view of the relationship
(\ref{defeq}) between $\tau_{1}$ and $\tau_{d}$.  From $t_{j} \in
\ker \beta$ for all $1 \leq j \leq \ell$, it follows in turn that
$\mathrm{Int}(c_{\beta}) t_{j} = t_{j}$, $\tau_{d} = \tau_{d} \circ \mathrm{Int}(c_{\beta})$, and $\tau_{1} = \tau_{d} \circ
\mathrm{Int}(c_{\beta}) \in X^{*}(\mathcal{T}_{1})$.  The strong real form
$\delta(\tau_{1})$ is defined through (\ref{pureforms}) and the
pairing $\zeta$, so $\delta(\tau_{1})$ is represented by the element
\begin{equation}
  \label{rep2}
  \exp(\uppi i \lambda_{1}) \, \delta_{q} \in G^{\Gamma},
\end{equation}
for $\lambda_{1} \in X_{*}(T_{1})$ as above.

Hence, in comparing $\delta(\tau_{1})$ with $\delta(\tau_{d})$, we are
reduced to comparing $\lambda_{1}$ with $\mathrm{Int}(d_{\beta}) \circ
\lambda_{1}$.  For the latter comparison recall from (\ref{pureforms})
that $\sigma \circ \lambda_{1} = - \lambda_{1}$.  Since $\beta \in
R(G,T_{1})$ is real, we have
$$\langle \beta, \lambda_{1} \rangle = \langle \beta \circ \sigma,
\sigma \circ \lambda_{1} \rangle = \langle \beta, -\lambda_{1}
\rangle$$
and $\langle \beta, \lambda_{1}  \rangle = 0$.  This orthogonality
implies $\mathrm{Int}(d_{\beta}) \circ \lambda_{1} = \lambda_{1}$ and
so the respective representatives (\ref{rep1}) and (\ref{rep2}) of
$\delta(\tau_{d})$ and $\delta(\tau_{1})$ are equal.

This proves the corollary when $J$ is trivial and $c' = c_{\beta}$. The proof for
non-trivial $J$ follows the same argument except that one must replace
Lemma \ref{toralquo} with Lemma \ref{toralquo3}, and replace
the maps of (\ref{pureforms}) with those of (\ref{strongforms}).  The
proof for arbitrary $c'$ is a proof by induction on the number of
imaginary noncompact roots occurring in its definition.  The details
are left to the reader.
\end{proof}

\subsection{The proof of Theorem \ref{mainthm}}
\label{thmsec}

We conclude by indicating how every representation in
$\Pi^{\mathrm{KLS}}_{\phi,J}$  (\ref{klspacket}) is equivalent to a unique representation
in $\Pi^{\mathrm{ABV}}_{\phi,J}$ (\ref{JLpacket2}) .  The main ideas have all been
presented.  All we have to do is recall them in the correct sequence.

We continue to work under the assumptions of Section \ref{klslp} and fix a
representation $(\pi(\eta_{\tau_{1}}), \delta(\tau_{1})) \in \Pi^{\mathrm{KLS}}_{\phi,J}$ of a strong real
form given by
$$\tau_{1} \in \left(
(\mathcal{T}_{1}^{{^\vee}\theta})^{\hat{J}}
/((\mathcal{T}_{1}^{{^\vee}\theta})^{\hat{J}})^{0} \langle
{^\vee}\alpha(-1) : \alpha \in c \,{^\vee}\Delta_{\phi} \rangle
\right)^{\wedge}.$$
We recall that $\pi(\eta_{\tau_{1}})$ is an irreducible standard representation of
$G(\mathbb{R}, \delta(\tau))$.  It is constructed from a character of
$T_{1}(\mathbb{R})$ and the embedding
$\eta_{\tau_{1}} : T_{1}(\mathbb{R}) \rightarrow G(\mathbb{R}, \delta(\tau_{1}))$
using cohomological and
parabolic induction.  Following the discussion after Lemma
\ref{newpair}, we wish to transform these data to obtain an equivalent
representation from an embedding $(d'\cdot T_{1})(\mathbb{R})
\rightarrow G(\mathbb{R}, \delta(\tau))$.  This is achieved by a
repeated application of Hecht-Schmid's character identity as follows.
Recall $d' = d_{\beta_{m}} \cdots d_{\beta_{1}}$.  The root $\beta_{1}
\in R(G,T_{1})$ is real and satisfies the parity condition of the
Hecht-Schmid character identity.  The Hecht-Schmid character identity
theorem then provides an embedding
$(d_{\beta_{1}}T_{1}d_{\beta_{1}}^{-1})(\mathbb{R}) \rightarrow
G(\mathbb{R}, \delta(\tau_{1}))$ so that the resulting standard
representation  is equivalent to $\pi(\eta_{\tau_{1}})$.  We repeat
this process for the remaining real roots, which all satisfy 
requisite parity conditions.  In the end, we obtain the irreducible
standard representation obtained from an embedding $(d'\cdot
T_{1})(\mathbb{R}) \rightarrow G(\mathbb{R}, \delta(\tau_{1}))$.  It is
equivalent to the original representation $\pi(\eta_{\tau_{1}})$ so we
keep this notation for it.  From this perspective, the representation
of the strong real form $(\pi(\eta_{\tau_{1}}), \delta(\tau_{1}))$ is
determined by three data:
\begin{itemize}
\item the pairing between the maximal tori $d'\cdot T_{1}$ and $c'
  \cdot \mathcal{T}_{1} = {^d}T$

\item the embedding $\eta_{\tau_{1}}: d'\cdot T_{1}(\mathbb{R})
  \rightarrow G(\mathbb{R}, \delta(\tau_{1}))$ and

\item the L-homomorphism $\phi$, whose image is contained in ${^d}T^{\Gamma}$.
\end{itemize}
These three data are precisely those which determine a representation
in $\Pi^{\mathrm{ABV}}_{\phi,J}$.  Therefore $(\pi(\eta_{\tau_{1}}),
\delta(\tau_{1}))$ is equivalent to the unique
representation
$$(\pi(\eta_{{\tau_{d}}}), \delta(\tau_{d})) \in
\Pi^{\mathrm{ABV}}_{\phi,J}$$
which indexed by
$$\tau_{d} \in \left( ({^d}T^{{^\vee}\theta})^{\hat{J}} /({^d}T^{{^\vee}\theta})^{\hat{J}})^{0} \langle {^\vee}\alpha(-1) : 
\alpha \in R_{\mathbb{R}}({^d}L,{^d}T) \rangle \right)^{\wedge}$$
such that $\delta(\tau_{d}) = \delta(\tau_{1})$.  By Corollary \ref{samedelta}
the character $\tau_{d}$ is the unique character which corresponds to
$\tau_{1}$ in the diagram of Corollary \ref{dualdiag}.  Moreover, the two characters, $\tau_{1}$ and
$\tau_{d}$, map to the same character $\tau$ of $({^\vee}G_{\phi})^{\hat{J}}
/ (({^\vee}G_{\phi})^{\hat{J}})^{0}$.  We may substitute $\tau$ for
$\tau_{1}$ or $\tau_{d}$ as we have in Sections \ref{abvlp}-\ref{klslp}.
With this substitution, what we have proven is that every element 
$(\pi(\eta_{{\tau_{1}}}), \delta(\tau)) \in\Pi^{\mathrm{KLS}}_{\phi,J}$
is equivalent to  $(\pi(\eta_{{\tau_{d}}}), \delta(\tau))
\in\Pi^{\mathrm{ABV}}_{\phi,J}$, and this is Theorem \ref{mainthm}.


\begin{bibdiv}
\begin{biblist}

\bib{aam}{unpublished}{
      author={Adams, J.},
      author={Arancibia, N.},
      author={Mezo, P.},
       title={Equivalent definitions of {A}rthur packets for real classical
  groups},
        note={arXiv:2108.05788},
}

\bib{Arthur}{book}{
      author={Arthur, James},
       title={The endoscopic classification of representations},
      series={American Mathematical Society Colloquium Publications},
   publisher={American Mathematical Society, Providence, RI},
        date={2013},
      volume={61},
        ISBN={978-0-8218-4990-3},
        note={Orthogonal and symplectic groups},
      review={\MR{3135650}},
}

\bib{abv}{book}{
      author={Adams, Jeffrey},
      author={Barbasch, Dan},
      author={Vogan, David~A., Jr.},
       title={The {L}anglands classification and irreducible characters for
  real reductive groups},
      series={Progress in Mathematics},
   publisher={Birkh\"auser Boston, Inc., Boston, MA},
        date={1992},
      volume={104},
        ISBN={0-8176-3634-X},
         url={http://dx.doi.org/10.1007/978-1-4612-0383-4},
      review={\MR{1162533 (93j:22001)}},
}

\bib{Adams-Fokko}{article}{
      author={Adams, Jeffrey},
      author={du~Cloux, Fokko},
       title={Algorithms for representation theory of real reductive groups},
        date={2009},
        ISSN={1474-7480},
     journal={J. Inst. Math. Jussieu},
      volume={8},
      number={2},
       pages={209\ndash 259},
         url={https://doi.org/10.1017/S1474748008000352},
      review={\MR{2485793}},
}

\bib{Adams-Taibi}{article}{
      author={Adams, Jeffrey},
      author={Ta\"{i}bi, Olivier},
       title={Galois and {C}artan cohomology of real groups},
        date={2018},
     journal={Duke Math. J.},
      volume={167},
      number={6},
       pages={1057\ndash 1097},
}

\bib{borel}{inproceedings}{
      author={Borel, A.},
       title={Automorphic {$L$}-functions},
        date={1979},
   booktitle={Automorphic forms, representations and {$L$}-functions ({P}roc.
  {S}ympos. {P}ure {M}ath., {O}regon {S}tate {U}niv., {C}orvallis, {O}re.,
  1977), {P}art 2},
      series={Proc. Sympos. Pure Math., XXXIII},
   publisher={Amer. Math. Soc., Providence, R.I.},
       pages={27\ndash 61},
}

\bib{BorelLAG}{book}{
      author={Borel, Armand},
       title={Linear algebraic groups},
     edition={Second},
      series={Graduate Texts in Mathematics},
   publisher={Springer-Verlag, New York},
        date={1991},
      volume={126},
}

\bib{Folland}{book}{
      author={Folland, Gerald~B.},
       title={A course in abstract harmonic analysis},
     edition={Second},
      series={Textbooks in Mathematics},
   publisher={CRC Press, Boca Raton, FL},
        date={2016},
}

\bib{kal}{article}{
      author={Kaletha, Tasho},
       title={Rigid inner forms of real and {$p$}-adic groups},
        date={2016},
     journal={Ann. of Math. (2)},
      volume={184},
      number={2},
       pages={559\ndash 632},
}

\bib{knapp}{book}{
      author={Knapp, Anthony~W.},
       title={Representation theory of semisimple groups},
      series={Princeton Mathematical Series},
   publisher={Princeton University Press, Princeton, NJ},
        date={1986},
      volume={36},
        note={An overview based on examples},
}

\bib{beyond}{book}{
      author={Knapp, Anthony~W.},
       title={Lie groups beyond an introduction},
      series={Progress in Mathematics},
   publisher={Birkh\"{a}user Boston, Inc., Boston, MA},
        date={1996},
      volume={140},
}

\bib{kottwitz84}{article}{
      author={Kottwitz, Robert~E.},
       title={Stable trace formula: cuspidal tempered terms},
        date={1984},
     journal={Duke Math. J.},
      volume={51},
      number={3},
       pages={611\ndash 650},
}

\bib{Kaletha-Minguez}{misc}{
      author={Kaletha, Tasho},
      author={Minguez, Alberto},
      author={Shin, Sug~Woo},
      author={White, Paul-James},
       title={Endoscopic classification of representations: Inner forms of
  unitary groups},
        date={2014},
}

\bib{Langlands}{incollection}{
      author={Langlands, R.~P.},
       title={On the classification of irreducible representations of real
  algebraic groups},
        date={1989},
   booktitle={Representation theory and harmonic analysis on semisimple {L}ie
  groups},
      series={Math. Surveys Monogr.},
      volume={31},
   publisher={Amer. Math. Soc., Providence, RI},
       pages={101\ndash 170},
         url={https://doi.org/10.1090/surv/031/03},
      review={\MR{1011897}},
}

\bib{Langlands-Shelstad}{article}{
      author={Langlands, R.~P.},
      author={Shelstad, D.},
       title={On the definition of transfer factors},
        date={1987},
        ISSN={0025-5831},
     journal={Math. Ann.},
      volume={278},
      number={1-4},
       pages={219\ndash 271},
         url={http://dx.doi.org/10.1007/BF01458070},
      review={\MR{909227}},
}

\bib{Mok}{article}{
      author={Mok, Chung~Pang},
       title={Endoscopic classification of representations of quasi-split
  unitary groups},
        date={2015},
        ISSN={0065-9266},
     journal={Mem. Amer. Math. Soc.},
      volume={235},
      number={1108},
       pages={vi+248},
         url={http://dx.doi.org/10.1090/memo/1108},
      review={\MR{3338302}},
}

\bib{MR3}{incollection}{
      author={Moeglin, Colette},
      author={Renard, David},
       title={Sur les paquets d'{A}rthur des groupes classiques et unitaires
  non quasi-d\'{e}ploy\'{e}s},
        date={2018},
   booktitle={Relative aspects in representation theory, {L}anglands
  functoriality and automorphic forms},
      series={Lecture Notes in Math.},
      volume={2221},
   publisher={Springer, Cham},
       pages={341\ndash 361},
      review={\MR{3839702}},
}

\bib{She82}{article}{
      author={Shelstad, D.},
       title={{$L$}-indistinguishability for real groups},
        date={1982},
     journal={Math. Ann.},
      volume={259},
      number={3},
       pages={385\ndash 430},
}

\bib{Speh-Vogan}{article}{
      author={Speh, Birgit},
      author={Vogan, David~A., Jr.},
       title={Reducibility of generalized principal series representations},
        date={1980},
     journal={Acta Math.},
      volume={145},
      number={3-4},
       pages={227\ndash 299},
}

\bib{greenbook}{book}{
      author={Vogan, David~A., Jr.},
       title={Representations of real reductive {L}ie groups},
      series={Progress in Mathematics},
   publisher={Birkh\"{a}user, Boston, Mass.},
        date={1981},
      volume={15},
}

\bib{vogan_local_langlands}{incollection}{
      author={Vogan, David~A., Jr.},
       title={The local {L}anglands conjecture},
        date={1993},
   booktitle={Representation theory of groups and algebras},
      series={Contemp. Math.},
      volume={145},
   publisher={Amer. Math. Soc.},
     address={Providence, RI},
       pages={305\ndash 379},
      review={\MR{MR1216197 (94e:22031)}},
}

\end{biblist}
\end{bibdiv}

\end{document}